 \patchcmd\Gread@eps{\@inputcheck#1 }{\@inputcheck"#1"\relax}{}{}
\newtheorem{theorem}{Theorem}[section]
\newtheorem{lemma}[theorem]{Lemma}
\newtheorem{corollary}[theorem]{Corollary}
\newtheorem{remark}[theorem]{Remark}
\newtheorem{definition}[theorem]{Definition}
\newtheorem*{assumption}{Assumption (A)}
\definecolor{light-gray}{gray}{0.95}
\def\centerarc[#1](#2)(#3:#4:#5){\draw[#1] ($(#2)+({#5*cos(#3)},{#5*sin(#3)})$) arc (#3:#4:#5);}
\newcommand{\vertiii}[1]{{\left\vert\kern-0.25ex\left\vert\kern-0.25ex\left\vert #1
    \right\vert\kern-0.25ex\right\vert\kern-0.25ex\right\vert}}
\numberwithin{equation}{section}
\numberwithin{figure}{section}
\newcommand{\mc}[1]{{\mathcal #1}}
\newcommand{\bb}[1]{{\mathbb #1}}
\newcommand{\<}{\big\langle}
\renewcommand{\>}{\big\rangle}
\renewcommand{\epsilon}{\varepsilon}
\newcommand{\R}{\mathbb R}
\newcommand{\Z}{\mathbb Z}
\newcommand{\N}{\mathbb N}
\renewcommand{\P}{\mathbb P}
\newcommand{\E}{\mathbb E}
\newcommand{\gen}{\mathcal{L}_N}
\title{The voter model with a slow membrane}
\author{Xiaofeng Xue}
\address{School of Mathematics and Statistics, Beijing Jiaotong University, Beijing 100044, China.}
\email{xfxue@bjtu.edu.cn}
\author{Linjie Zhao}
\address{School of Mathematics and Statistics, Huazhong University of Science \& Technology, Wuhan 430074, China.}
\email{linjie\_zhao@hust.edu.cn}
\keywords{voter model; slow membrane; hydrodynamic limits; nonequilibrium fluctuations}
\begin{document}

\maketitle

\begin{abstract}
We introduce the voter model on the infinite lattice with a slow membrane and investigate its hydrodynamic behavior and nonequilibrium fluctuations.  The model is defined as follows: a voter adopts one of its neighbors' opinion at rate one except for neighbors crossing the hyperplane $\{x:x_1 = 1/2\}$, where the rate is $\alpha N^{-\beta}$. Above, $\alpha>0,\,\beta \geq 0$ are two parameters and $N$ is the scaling parameter.  The hydrodynamic equation turns out to be heat equation with various boundary conditions depending on the value of $\beta$. For the nonequilibrium fluctuations, the limit is described by generalized Ornstein-Uhlenbeck process with certain boundary condition corresponding to the hydrodynamic equation. 
\end{abstract}

\section{Introduction}

One of the main issues  in statistical physics is to derive partial differential equations from microscopic systems.  Roughly speaking, for symmetric systems the macroscopic behaviors are usually governed by diffusive equations, and for asymmetric systems usually by hyperbolic equations. We refer the readers to \cite{klscaling,spohnlargescale} for a comprehensive understanding of the above subject.
Recently, it has been a popular topic to establish PDEs with various boundary conditions from  interacting particle systems. For example, Gon{\c c}alves, Franco and their collaborators have obtained the heat equation with Dirichlet/Robin/Neumann boundary conditions from the symmetric simple exclusion processes in one dimension \cite{baldasso2017exclusion,francogn2013slowbond,franco2016scaling}. The microscopic models they considered are either defined on a ring with a slow site/slow bond, or defined on a segment with slow boundaries.  The results have also been extended to higher dimensions  in \cite{franco2019membrane,schiavo2021scaling,xu2021hydrodynamic}.

The voter model also plays an important role in the theory of interacting particle systems, and its hydrodynamic behavior has been considered by Presutti and Spohn in \cite{PresuttiSpohn83}.  For each $x \in \Z^d$, imagine there is a voter at site $x$.  Each voter has one of two possible opinions on an issue, and the possible opinions are denoted by $0$ or $1$.  The dynamics is quite simple: a voter adopts one of its neighbors' opinion at rate one.  Although the voter model has the same macroscopic behavior as the symmetric exclusion process, it differs from the later in essential points: the magnetization of the voter model is not locally conserved and the static correlations decay slowly. We refer the readers to \cite{PresuttiSpohn83,liggettips} for details of the above properties. A natural question is to consider the impact of the slow dynamics introduced in  \cite{baldasso2017exclusion,francogn2013slowbond,franco2016scaling} on the voter model, which is the main aim of this article.

In order to obtain boundary conditions, we let the voters at sites $x$ and $x+e_1$ adopt the other's opinion at rate $\alpha N^{-\beta}$ if $x_1 = 0$.  Above, $x_1$ is the first coordinate of site $x$, and $\{e_i\}_{1 \leq i \leq d}$ is the canonical basis of $\Z^d$.  The two parameters $\alpha > 0, \beta \geq 0$ denote the strength of the boundary interaction, and $N \in \N$ is a scaling parameter. We consider the voter model in dimensions $d \geq 4$, and derive Robin boundary conditions if $\beta = 1$ and Neumann boundary conditions if $\beta > 1$.  There are no boundary conditions in the case $0 \leq \beta < 1$. Moreover, for nonequilibrium fluctuations, we prove that if $\beta =1$, the limit is given by generalized Ornstein–Uhlenbeck process with Robin boundary condition, and if $\beta > 3/2$, the limit has Neumann boundary condition. We remark that the other cases remain open.

Although the results are similar to the exclusion process, the proof differs a lot  from the latter because of the two properties we addressed above. The standard approach to prove a hydrodynamic limit result is as follows: we first write down a martingale formula for the empirical measure of the process by using Dynkin's formula, and then prove the so called replacement lemmas to close the equation.  To ensure uniqueness of  solution to hydrodynamic equations, an energy estimate is usually also needed. For exclusion process with boundary dynamics, the replacement lemmas and energy estimates are proved by investigating the entropy production of the process.  This is not an easy task for the voter model with boundary terms since the invariant measures of the voter model in this case are not explicitly known. Instead, we adopt the duality method and investigate the correlation functions of the model. It is well known that the duality of the voter model is coalescing random walks, \emph{cf.\,}\cite{liggettips} for example. For the voter model with boundary dynamics we introduced above, the duality turns out to be coalescing random walks with slow bonds. Along the proof, we exploit an invariance principle for the random walk with slow bonds, proved very recently by Erhard {\it et al.\;}in \cite{Erhard2021RandomWalkwithSlowBondandSnappingOutBrownianMotion}. We also remark that  no replacement lemmas are needed in Presutti and Spohn's paper \cite{PresuttiSpohn83} since the process is linear.

We believe the results should also hold for dimension $d = 3$. The main issue is to investigate the meeting probabilities of two independent random walks with slow bonds in dimension $d =3$, which has long been a hard problem and has its own interest. This case  is presently out of our reach and we leave it open.  For dimensions $d \leq 2$, locally the voter model is in a superposition of the configuration with all sites occupied  and the  one with all sites empty, see \cite[Section 2]{PresuttiSpohn83} for details.

The rest of the paper is organized as follows.  In Section \ref{sec:results}, we define the model rigorously and state our main results.  We state duality of the voter model and invariance principle for the random walk with slow bounds in Section \ref{sec:preliminaries}. Hydrodynamic limits and fluctuations for the voter model are proved respectively in Sections \ref{sec:hl} and \ref{sec:fluc}.

\section{Notation and Results}\label{sec:results}
\subsection{The voter model.} The state space of the voter model is $\Omega_d:=\{0,1\}^{\Z^d}$, $d \geq 1$.  In this article, we focus on the case $d \geq 4$. We use $x,y$ (resp.\;$u,v$) to denote points in $\Z^d$ (resp.\;$\R^d$), and $x_i$ (resp.\;$u_i$), $1 \leq i \leq d$, to denote the $i$-th component of the point $x$ (resp.\;$u$).  Let $\{e_i\}_{1 \leq i \leq d}$ be the canonical basis of $\Z^d$, \emph{i.e.}, $e_i$ is the vector in $\Z^d$ with the $i$--th component equal to one and the others equal to zero.  For a point $x \in \Z^d$, denote $|x| = \sum_{i=1}^d |x_i|$. Fix parameters $\alpha> 0,\, \beta \geq 0$.  We use $N \in \N$ to denote the scaling parameter. The generator of the process $\gen$, which  acts on local functions $f: \Omega_d \rightarrow \R$, is given by
\begin{equation}
\gen f (\eta) = \sum_{x,y \in \Z^d,\atop |x-y| = 1} \xi^N_{x,y} \big(f(\eta^{x,y})   - f(\eta)\big),
\end{equation}
where  $\eta^{x,y}$ is the configuration obtained from $\eta$ by flipping the value of $\eta (x)$ to $\eta(y)$, \emph{i.e.}, $\eta^{x,y} (x) = \eta (y)$ and $\eta^{x,y} (z) = \eta (z)$ for $z \neq x$. The flipping rates $\xi^N_{x,y} $ are symmetric, $\xi^N_{x,y} = \xi^N_{y,x}$ for $|x-y|=1$,  and are given by
\begin{equation}\label{equ 2.2}
		\xi^N_{x,y} = \begin{cases}
			\alpha N^{-\beta} &\text{if\;} x_1 =0, y_1 =  1\\
			1 \quad &\text{otherwise}
		\end{cases}
\end{equation}
We refer the readers to \cite{liggettips} for a construction of the process.

\subsection{Hydrodynamic limit.}\label{subsec:hl} In this subsection, we state the first main result of the article.   To this aim, we first introduce weak solutions  of heat equations with/without  boundary conditions. Hereafter, we fix a time horizon $T > 0$.

\begin{definition}[Hydrodynamic equation for $0 \leq \beta < 1$]  Let $\rho_0: \bb{R}^d \rightarrow [0,1]$ be continuous. A bounded function $\rho: [0,T] \times \R^d \rightarrow \R$ is said to be a weak solution of the heat equation
\begin{equation}\label{hydro_sub}
	\begin{cases}
		\partial_t \rho (t,u) = \Delta \rho (t,u), \quad &t>0,\;u \in \R^d\\
		\rho (0,u) = \rho_0 (u),\quad &u \in \R^d
	\end{cases}
\end{equation}
if for any $H \in C_c^2 (\R^d)$ and any $t \in [0,T]$,
\[\int_{\R^d}	\rho (t,u) H(u) du = \int_{\R^d}	\rho_0 (u) H(u) du + \int_0^t \int_{\R^d}	\rho (s,u) \Delta H(u) \,du\,ds.\]
\end{definition}

For the case $\beta \geq 1$, we first introduce the notion of Sobolev spaces. For an open subset $U \subset \R^d$, let  $\mc{H^1} (U)$ be the Sobolev space which consists of locally integrable functions with \emph{weak derivatives} in $L^2 (U)$, \emph{i.e.}, for any $\varrho\in \mc{H^1}(U)$, there exist $d$ elements in $L^2(U)$, denoted by $\partial_{u_1}\varrho,\ldots, \partial_{u_d}\varrho$, such that
\[
\int_U\varrho(u)\partial_{u_i}H(u)du=-\int_U\partial_{u_i}\varrho(u)H(u)du
\]
for any  $H\in C_c^\infty (U)$ and for any $1\leq i\leq d$. Denote by $L^2 ([0,T],\mc{H}^1 (U))$ the space of measurable functions $\rho : [0,T] \rightarrow \mc{H}^1 (U)$ such that the norm defined by
\[\|\rho\|^2_{L^2 ([0,T],\mc{H}^1 (U))} := \int_0^T \int_{U} \sum_{i=1}^d \big(\partial_{u_i} \rho (t,u)\big)^2 \,du\,dt \]
is finite. We refer the readers to \cite[Chapter 5]{evans} for properties of Sobolev spaces.

In this article, we are concerned with the case where $U=\R^d\backslash \{u\in \R^d:u_1 = 0\}$.  For simplicity, from now on we write $\{u\in \R^d:~u_1\in A\}$ as $\{u_1\in A\}$ for any subset $A\subset \R$. For any $f\in \mc{H}^1\left(\R^d\backslash\{u_1=0\}\right)$, we have that $f\big|_{u_1>0}\in \mc{H}^1\left(\{u_1>0\}\right)$ and $f\big|_{u_1<0}\in \mc{H}^1\left(\{u_1<0\}\right)$. Then, we use $f(\cdot^+)$ to denote the trace of $f\big|_{u_1>0}$ on $\{u_1=0\}$ and use $f(\cdot^-)$ to denote the trace of $f\big|_{u_1<0}$ on $\{u_1=0\}$. The following property of $f(\cdot^+)$ and $f(\cdot^-)$ is crucial. For any bounded $D\subset \{u_1=0\}$, according to the trace theorem (\emph{cf.}\;\cite[Theorem 1, Section 5.5]{evans} for example), it is easy to check that
\begin{equation}\label{equ average converges to trace}
{\rm Aver}_{\epsilon,+}f\big|_D\rightarrow f(\cdot^+)\big|_D \text{~and~}{\rm Aver}_{\epsilon,-}f\big|_D\rightarrow f(\cdot^-)\big|_D
\end{equation}
in $L^2(D)$ as $\epsilon\rightarrow 0$, where
\[
{\rm Aver}_{\epsilon, +}f(u)=\frac{1}{(2\epsilon)^{d-1}\epsilon}\int\limits_{-\epsilon<v_i<\epsilon\text{~for~}2\leq i\leq d,
\atop 0<v_1<\epsilon} f(u+v)dv
\]
and
\[
{\rm Aver}_{\epsilon, -}f(u)=\frac{1}{(2\epsilon)^{d-1}\epsilon}\int\limits_{-\epsilon<v_i<\epsilon\text{~for~}2\leq i\leq d,
\atop -\epsilon<v_1<0} f(u+v)dv
\]
for any $u\in \{u_1=0\}$.

We use $\mathcal{C}$ to denote the set of functions $H$  such that
\[
H(u)=H^+(u)1_{\{u_1>0\}}+H^-(u)1_{\{u_1 \leq 0\}}
\]
for some $H^+, H^-\in C^2_c(\R^d)$. Then, for any $H\in \mathcal{C}$ and $u\in \{u_1=0\}$,  define
\[
H(u^+)=H^+(u), \;H(u^-)=H^-(u), \;\partial_{u_1}^+H(u)=\partial_{u_1}H^+(u),\;  \partial_{u_1}^-H(u)=\partial_{u_1}H^-(u).
\]
Note that functions in $\mathcal{C}$ may be discontinuous at the boundary $\{u_1 = 0\}$.

\begin{definition}[Hydrodynamic equation for $\beta  = 1$] Let $\rho_0: \bb{R}^d \rightarrow [0,1]$ be continuous. A bounded function $\rho: [0,T] \times \R^d \rightarrow \R$ is said to be a weak solution of the heat equation with Robin boundary condition
\begin{equation}\label{hydro_critic}
	\begin{cases}
		\partial_t \rho (t,u) = \Delta \rho (t,u), \quad &t>0,\;u \in \R^d\backslash \{u_1 = 0\}\\
			\partial_{u_1}^+ \rho (t,u)= \partial_{u_1}^- \rho (t,u)=\alpha (\rho (t,u^+) - \rho (t,u^-)), \quad &u_1 = 0,\\
		\rho (0,u) = \rho_0 (u),\quad &u \in \R^d
	\end{cases}
\end{equation}
if $\rho \in L^2 ([0,T], \mc{H}^1 (\R^d \backslash \{u_1 = 0\}))$, and for any function $H \in \mathcal{C}$, for any $t \in [0,T]$,
\begin{multline*}
	\int_{\R^d}	\rho (t,u) H(u) du = \int_{\R^d}	\rho_0 (u) H(u) du \\
	+ \int_0^t \int_{\R^d}	\rho (s,u) \sum_{i=2}^d \partial_{u_i}^2 H(u) du\,ds + \int_0^t \int_{\{u_1 \neq 0\}} 	\rho (s,u)  \partial_{u_1}^2 H(u) du\,ds  \\
	+ \int_0^t \int_{\{u_1 = 0\}} \big\{ \rho_s (u^+) \partial_{u_1}^+ H(u) - \rho_s (u^-) \partial_{u_1}^- H(u) + \alpha [\rho_s (u^-)-\rho_s (u^+)] [H(u^+) - H(u^-)]\big\} \,dS\,ds.
\end{multline*}
\end{definition}

\begin{definition}[Hydrodynamic equation for $\beta > 1$] Let $\rho_0: \bb{R}^d \rightarrow [0,1]$ be continuous. A bounded function $\rho: [0,T] \times \R^d \rightarrow \R$ is said to be a weak solution of the heat equation with Neumann boundary condition
\begin{equation}\label{hydro_super}
	\begin{cases}
		\partial_t \rho (t,u) = \Delta \rho (t,u), \quad &t>0,\;u \in \R^d\backslash \{u_1 = 0\}\\
		\partial_{u_1}^+ \rho (t,u)=
		\partial_{u_1}^- \rho (t,u)= 0, \quad &u_1 = 0,\\
		\rho (0,u) = \rho_0 (u),\quad &u \in \R^d
	\end{cases}
\end{equation}
if $\rho \in L^2 ([0,T], \mc{H}^1 (\R^d \backslash \{u_1 = 0\}))$, and for any function $H \in \mathcal{C}$, for any $t \in [0,T]$,
\begin{multline*}
	\int_{\R^d}	\rho (t,u) H(u) du = \int_{\R^d}	\rho_0 (u) H(u) du \\
	+ \int_0^t \int_{\R^d}	\rho (s,u) \sum_{i=2}^d \partial_{u_i}^2 H(u) du\,ds + \int_0^t \int_{\{u_1 \neq 0\}} 	\rho (s,u)  \partial_{u_1}^2 H(u) du\,ds  \\
	+ \int_0^t \int_{\{u_1 = 0\}} \big\{\rho_s (u^+) \partial_{u_1}^+ H(u) - \rho_s (u^-) \partial_{u_1}^- H(u)\big\}\,dS\,ds.
\end{multline*}
\end{definition}

\begin{remark}[Uniqueness of weak solutions]
It is well known that the weak solution to the heat equation \eqref{hydro_sub} is unique.  Following the ideas in \cite[Section 7.2]{francogn2013slowbond}, it is easy to prove the weak solutions to the above PDEs \eqref{hydro_critic} and \eqref{hydro_super} are unique.  We also refer the readers to \cite[Section 7]{franco2019membrane} for the uniqueness of weak solutions to the above PDEs if the underlying space is the torus $\bb{T}^d$ instead of $\R^d$.
\end{remark}

Throughout the article, we assume the initial distribution $\mu_N$ of the process satisfies the following condition.

\begin{assumption}
Fix an initial density profile $\rho_0: \R^d \rightarrow [0,1]$ such that $\rho_0$ has continuous and bounded partial derivative with respect to the first coordinate $u_1$, and that $\rho_0$ is a global Lipschitz function on $\mathbb{R}^d$. Assume that under $\mu_N$, $\{\eta (x)\}_{x\in \mathbb{Z}^d}$ are independent, and that
\[\mu_N (\eta (x) = 1) = \rho_0\left(\tfrac{x}{N}\right)  \]
for any $x\in \mathbb{Z}^d$.
\end{assumption}

Let $\{\eta_t\}_{t \geq 0}$ be the accelerated process with generator $N^2 \gen$ and with  initial condition $\mu_N$. To make notations short, we omit the dependence of the process $\{\eta_t\}_{t \geq 0}$ on $N$. Denote by $\P$ the probability measure on the path space $D([0,T],\Omega_d)$ associated with the process $\{\eta_t\}_{t \geq 0}$ and the initial distribution $\mu_N$, and by $\E$ the corresponding expectation. Note that we also omit the dependence of $\P$ (resp. $\E$) on $N$ and $\mu_N$ for short.

Now we are ready to state the hydrodynamic limits for the voter model with a slow membrane.

\begin{theorem}\label{thm:hl}
Assume $d \geq 4$. For any $t \in [0,T]$, any $\varepsilon > 0$ and any $H \in C_c (\R^d)$,
\[\lim_{N \rightarrow \infty} \P \Big( \Big|\frac{1}{N^d}\sum_{x \in \Z^d} \eta_t (x) H(\tfrac x N) - \int_{\R^d} \rho (t,u) H(u) du \Big| > \varepsilon \Big)=0,
\]
where $\rho (t,u)$ is the unique weak solution
\begin{enumerate}[(i)]
	\item   to the heat equation \eqref{hydro_sub} if $0 \leq  \beta < 1$;
		\item  to  the heat equation with Robin boundary condition \eqref{hydro_critic} if $\beta =1$;
			\item  to the heat equation with Neumann boundary condition \eqref{hydro_super} if $\beta > 1$.
\end{enumerate}
\end{theorem}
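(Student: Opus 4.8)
The plan is to establish $L^2$ convergence of the empirical average and then conclude by Chebyshev's inequality. Writing $\langle \pi^N_t, H\rangle := N^{-d}\sum_{x} \eta_t(x) H(x/N)$, it suffices to prove two facts: that the mean $\E[\langle \pi^N_t, H\rangle]$ converges to $\int_{\R^d}\rho(t,u)H(u)\,du$, and that the variance $\vari(\langle \pi^N_t, H\rangle)$ tends to $0$. Both reductions rest on the duality between the voter model and coalescing random walks stated in Section~\ref{sec:preliminaries}: since the flip rates $\xi^N_{x,y}$ are symmetric, the dual of the accelerated generator $N^2\gen$ is a system of coalescing random walks whose single particle jumps from $x$ to $y$ at rate $N^2\xi^N_{x,y}$, i.e.\ a random walk with slow bonds across the membrane.

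For the mean, the one-particle duality gives
\[
\E_{\mu_N}[\eta_t(x)] = \E_x\big[\rho_0(X_t/N)\big],
\]
where $X_t$ is a single slow-bond random walk started at $x$ and accelerated by $N^2$, and we used $\mu_N(\eta(z)=1)=\rho_0(z/N)$. The diffusively rescaled walk $X_t/N$ converges, by the invariance principle stated in Section~\ref{sec:preliminaries}, to a limiting diffusion $\mathcal{B}_t$ whose law depends on $\beta$: standard Brownian motion when $0\le\beta<1$, snapping-out Brownian motion with parameter $\alpha$ when $\beta=1$, and Brownian motion reflected off $\{u_1=0\}$ when $\beta>1$. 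Passing to the limit in the Riemann sum $N^{-d}\sum_x \E_x[\rho_0(X_t/N)]H(x/N)$ identifies the limit of the mean as $\int \rho(t,u)H(u)\,du$ with $\rho(t,u)=\E_u[\rho_0(\mathcal{B}_t)]$. I would then verify that this $\rho$ is the weak solution claimed in each case, either by directly checking the weak formulation using the generator and boundary behavior of $\mathcal{B}$, or, more economically, by combining the uniqueness stated in the Remark with the fact that $\E_u[\rho_0(\mathcal{B}_t)]$ solves the corresponding heat equation.

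For the variance, the two-particle duality yields
\[
\E_{\mu_N}[\eta_t(x)\eta_t(y)] = \E_{x,y}\Big[\prod_{z\in\zeta_t}\rho_0(z/N)\Big],
\]
where $\zeta_t$ is the pair of coalescing slow-bond walks started from $\{x,y\}$. Coupling these with two \emph{independent} slow-bond walks that coincide until their meeting time $\tau$, and using $\rho_0\in[0,1]$, the covariance is supported on $\{\tau\le t\}$ and satisfies
\[
\big|\E[\eta_t(x)\eta_t(y)]-\E[\eta_t(x)]\,\E[\eta_t(y)]\big| \;\le\; C\,\P_{x,y}(\tau\le t),
\]
the probability that two independent slow-bond walks meet before the accelerated time horizon. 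Hence
\[
\vari\big(\langle \pi^N_t, H\rangle\big) \;\le\; \frac{C\|H\|_\infty^2}{N^{2d}}\sum_{x,y\in\,\mathrm{supp}\,H}\P_{x,y}(\tau\le t),
\]
and the task reduces to a quantitative upper bound on the meeting probability as a function of the separation $|x-y|$, summed over the $O(N^d)$ relevant sites.

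The main obstacle is precisely this meeting-probability estimate for walks with slow bonds. The presence of the slow membrane makes the local structure near $\{u_1=0\}$ delicate, and what is required is a transience-type bound, morally $\P_{x,y}(\tau<\infty)\lesssim |x-y|^{-(d-2)}$, that is uniform in $N$ and robust under the slow bonds. I expect that for $d\ge 4$ such a bound can be obtained by comparison with the free walk together with crude Green's-function estimates, after which the double sum above is of order $N^{-(d-2)}\to 0$; the borderline dimension $d=3$, where sharp control of the meeting probability of slow-bond walks is not available, is exactly the case left open. With the variance bound in hand, Chebyshev's inequality and the mean computation together yield the claimed convergence in probability.
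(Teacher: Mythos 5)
Your architecture --- one-particle duality for the mean, the invariance principle of Theorem \ref{theorem invariance} to identify its limit, a coalescing-walk bound to kill the variance, then Chebyshev --- is sound, and it is genuinely different from the paper's proof, which runs the classical hydrodynamic scheme: tightness of $\{Q^N\}$ (Lemma \ref{lem:tight}), Dynkin's martingale decomposition, replacement lemmas (Lemma \ref{lem:replacement}) to close the equation, an energy estimate (Lemma \ref{lem:energy}) placing limit densities in $L^2([0,T],\mc{H}^1(\R^d\setminus\{u_1=0\}))$, and uniqueness of weak solutions. In the paper, duality enters only as the tool used to \emph{prove} those lemmas (entropy methods being unavailable since the invariant measures are not explicit). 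Your route trades all of that machinery for one external input: that $u\mapsto\E_u[\rho_0(W_{t,\beta}^u)]$ is \emph{the} unique weak solution of the corresponding equation, including membership in the energy class required by the definition for $\beta\ge1$; the paper obtains this identification by citing \cite[Proposition 2.3]{Erhard2021RandomWalkwithSlowBondandSnappingOutBrownianMotion}, and with that citation your scheme does deliver the fixed-time convergence in probability that Theorem \ref{thm:hl} asserts. What your route gives up is the intrinsic, process-level characterization of limit points; what it buys is the elimination of tightness, replacement and energy arguments. (A small slip: with the paper's scaling, the first coordinate converges to $B_{2t,\beta}$, a snapping-out Brownian motion of parameter $2\alpha$, not $\alpha$.)

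The genuine gap is the meeting-probability estimate, which you state as an expectation rather than prove, and the route you propose (Green's-function comparison with the free walk, ``robust under the slow bonds'') is exactly the hard --- and unnecessary --- part. The difference of the two first coordinates is \emph{not} a random walk: the jump rates depend on where each walker sits relative to the membrane, not just on their difference, so a perturbative comparison with the free walk is delicate; this difficulty is precisely why $d=3$ is left open. The paper's actual argument sidesteps it entirely: the slow bond affects only the first coordinate, so project it out. The transversal parts of the two independent walks are independent simple random walks on $\Z^{d-1}$, their difference is a (time-changed) simple random walk on $\Z^{d-1}$, and a meeting of the full walks forces this transversal difference to hit the origin. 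Hence $\P_{x,y}(\tau<\infty)\le\Gamma\big((x-y)^\bot\big)$, uniformly in $N$ and $\beta$, where $\Gamma$ is the hitting probability of the origin for the simple random walk on $\Z^{d-1}$ --- transient precisely because $d-1\ge3$, which is where the hypothesis $d\ge4$ enters. This bound is weaker than your claimed $|x-y|^{-(d-2)}$ (it is trivial when $(x-y)^\bot=0$), but it suffices: in the variance sum, pairs with $|(x-y)^\bot|\le M$ inside the support of $H$ number $O(M^{d-1}N^{d+1})$ and contribute $O(M^{d-1}N^{1-d})$, while the remaining pairs contribute at most $C_H\sup_{|z|>M}\Gamma(z)$, which vanishes as $M\to\infty$. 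This is exactly the computation in the paper's Lemma \ref{lemma 5.4 variance} and inside Lemma \ref{lem:replacement}; once you replace your conjectured estimate by this projection argument, your proof closes.
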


\subsection{Fluctuations.} In this subsection, we state nonequilibrium fluctuations for the voter model with a slow membrane. We first introduce the space of test functions for the density fluctuation field.

\begin{definition}[The space of test functions]
(1) For $0 < \beta < 1$, let $\mathcal{S}_\beta (\R^d)$ be the  usual Schwartz space $\mathcal{S} (\R^d)$.

(2)  For $\beta = 1$, let $\mathcal{S}_\beta (\R^d)$ be the family of functions $H \in \mc{S} (\R^d \backslash \{u_1=0\})$ such that
\[\partial_{u_1}^{+,2k+1} H (t,u) = \partial_{u_1}^{-,2k+1} H (t,u) = \alpha\big[ \partial_{u_1}^{+,2k} H (t,u) - \partial_{u_1}^{-,2k} H (t,u) \big], \; \forall u_1 = 0,\; \forall k \geq 0.\]

(3)  For $\beta > 1$, let $\mathcal{S}_\beta (\R^d)$ be the family of functions $H \in \mc{S} (\R^d \backslash \{u_1=0\})$ such that
\[\partial_{u_1}^{+,2k+1} H (t,u) = \partial_{u_1}^{-,2k+1} H (t,u) = 0, \; \forall u_1 = 0,\; \forall k \geq 0.\]
\end{definition}

Note that, according to above definitions, the semigroup $T_{\beta,t}$ associated to the corresponding hydrodynamic equations can be considered as an operator from $\mathcal{S}_\beta(\R^d)$ to $\mathcal{S}_\beta(\R^d)$  for any $\beta>0$ (\emph{cf.}\,\cite[Proposition 2.3]{erhard2020non}), which is crucial for us later to define a generalized Ornstein-Uhlenbeck process as the fluctuation limit of our process.

Define the fluctuation field $\mc{Y}^N_t $, which acts on  functions $H \in \mc{S}_\beta (\R^d)$, as
\[\mc{Y}^N_t (H) = \frac{1}{N^{1+d/2}} \sum_{x \in \Z^d} \bar{\eta}_t (x) H(\tfrac{x}{N}) = N^{d/2-1} \Big( \pi^N_t (H) - \E \big[\pi^N_t (H)\big]\Big),\]
where $\bar{\eta}_t (x) = \eta_t (x) - \E [ \eta_t (x)]$. Next, we introduce the limit of the fluctuation field, which is given by a martingale problem.

\begin{definition}[The limiting fluctuation field]  We say $\mathcal{Y}_\cdot \in C([0,T],\mathcal{S}^\prime (\R^d))$ is a solution to the following generalized  Ornstein-Uhlenbeck process
\begin{equation}\label{ou}
\begin{cases}
	\partial_t \mc{Y}_t = \Delta \mc{Y}_t + \dot{\mc{W}}_t,\\
	\mc{Y}_0 = 0,
\end{cases}
\end{equation}
if for any $H \in \mathcal{S}_\beta (\R^d)$,
\begin{align*}
\mc{M}_t (H) &= \mc{Y}_t (H)  - \int_0^t \mc{Y}_s ( \Delta H) ds,\\
\mc{N}_t (H) &= \big[\mc{M}_t (H) \big]^2 - 4d (1-\gamma_d) \int_0^t \int_{\R^d} \rho(s,u) (1-\rho(s,u)) H^2 (u) du ds
\end{align*}
are both martingales. Above, $\rho(t,u)$ is the unique solution to the corresponding hydrodynamic equation defined in the last subsection, and $\dot{\mc{W}}_t$ is inhomogeneous space-time white noise with mean zero and covariance given by
\[E [\mc{W_t} (H) \mc{W}_s (G)] = 4d (1-\gamma_d) \int_0^{t \wedge s} \int_{\R^d} \rho(\tau,u) (1-\rho(\tau,u)  H(u) G(u) du d \tau\]
for $H,G \in L^2 (\R^d)$, where $\gamma_d$ is the probability that the simple random walk on $\mathbb{Z}^d$ starting at $0$ returns to $0$ again.
\end{definition}

\begin{remark}\label{remark uniquness of OU}
	We refer the readers to \cite[Proposition 2.7]{erhard2020non} for a detailed proof of the uniqueness of the solution to the generalized  Ornstein-Uhlenbeck process \eqref{ou}.  Here we give an outline. Roughly speaking, since $\{\mc{M}_t(H)\}_{t\geq 0}, \{\mc{N}_t(H)\}_{t\geq 0}$ are martingales, via It\^{o}'s formula we can deduce that
\begin{align*}
&E\left(e^{\sqrt{-1}\mc{Y}_t(H)}\Big|\mc{Y}_r, r\leq s\right)\\
&=\exp\left(\sqrt{-1}\mc{Y}_s\left(T_{\beta,t-s}H\right)-\frac{4d(1-\gamma_d)}{2}\int_s^t\int_{\R^d}\rho(\tau,u)(1-\rho(\tau, u))
\left(T_{\beta,t-\tau}H (u)\right)^2dud\tau\right)
\end{align*}
for any $s<t$, where $\{T_{\beta,t}H\}_{t\geq 0}$ is the solution to the heat equation with corresponding boundary condition and initial condition $H$. Hence $\{\mc{Y}_t\}_{t\geq 0}$ is uniquely distributed.
\end{remark}

The following is the second main result of the article.

\begin{theorem}\label{thm:fluctuation}
Assume $\beta = 1$ or $\beta > 3/2$. The sequence of the processes $\{\mc{Y}^N_t, 0 \leq t \leq T\}_{N \geq 1}$, converges  in distribution, as $N \rightarrow \infty$, with respect to the Skorohod topology of $D([0,T],\mathcal{S}_\beta^\prime (\R^d))$ to  $\{\mc{Y}_t, 0 \leq t \leq T\}$, which is the unique solution to the generalized  Ornstein-Uhlenbeck process \eqref{ou}.
\end{theorem}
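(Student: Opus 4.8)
My plan is to prove tightness of the sequence $\{\mc{Y}^N_t\}_{N \geq 1}$ in $D([0,T],\mc{S}_\beta^\prime(\R^d))$ and to characterize every limit point as the unique solution to the martingale problem \eqref{ou}; since the limit is unique by Remark \ref{remark uniquness of OU}, convergence in distribution follows. The starting point is Dynkin's formula applied to the accelerated generator $N^2 \gen$. For a fixed test function $H \in \mc{S}_\beta(\R^d)$, I would write
\begin{equation*}
\mc{M}^N_t (H) = \mc{Y}^N_t (H) - \mc{Y}^N_0 (H) - \int_0^t N^2 \gen \, \mc{Y}^N_s (H)\, ds
\end{equation*}
as a martingale, together with its quadratic variation obtained from the carr\'e-du-champ operator. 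The key computation is to show that $N^2 \gen$ acting on the field produces a discrete Laplacian applied to $H$, plus the boundary corrections coming from the slow bonds across $\{x_1 = 0\}$. The defining relations of $\mc{S}_\beta(\R^d)$ are exactly what is needed so that, for $\beta = 1$, the boundary terms assemble into the Robin operator, and for $\beta > 1$ into the Neumann operator; this is the point where the restriction to the function space $\mc{S}_\beta$ is used. I would then need to replace the discrete Laplacian acting on $H$ by $\Delta H$ and show the integrated drift converges to $\int_0^t \mc{Y}_s(\Delta H)\, ds$, using the a priori bounds on the first two moments of $\eta_t(x)$ supplied by the correlation-function estimates of the hydrodynamic analysis.

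For the quadratic variation, a direct calculation of $\langle \mc{M}^N(H)\rangle_t$ gives an expression of the form $N^{d-2} N^2 \cdot N^{-2-d}$ times a sum over neighboring pairs of $(\bar\eta_s(x) - \bar\eta_s(y))^2 H(\tfrac{x}{N})^2$-type terms. The main analytic input is that, as $N \to \infty$, the two-point correlations $\E[\eta_s(x)\eta_s(y)]$ for neighboring $x,y$ concentrate so that the empirical average of $(\eta_s(x)-\eta_s(y))^2$ is governed by $\rho(s,u)(1-\rho(s,u))$ together with the coalescence constant; this is where the factor $4d(1-\gamma_d)$ enters, $\gamma_d$ being the return probability of the simple random walk. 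I would establish this via the duality with coalescing random walks with slow bonds and the correlation estimates already developed for Theorem \ref{thm:hl}. The outcome is that $\langle \mc{M}^N(H)\rangle_t$ converges to $4d(1-\gamma_d)\int_0^t \int_{\R^d} \rho(s,u)(1-\rho(s,u)) H^2(u)\, du\, ds$, matching $\mc{N}_t(H)$.

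Tightness I would handle through the Mitoma criterion, reducing tightness of the $\mc{S}_\beta^\prime$-valued process to tightness of the real-valued processes $\{\mc{Y}^N_t(H)\}_N$ for each fixed $H$; the latter follows from Aldous' criterion using the martingale decomposition and the moment bounds above, with the drift term controlled by the $L^2$ bound on $\mc{Y}^N_s(\Delta H)$ and the martingale term by its quadratic variation. I would also verify convergence of $\mc{Y}^N_0$ to a Gaussian field of the correct covariance, which follows from the product structure of the initial measure $\mu_N$ in Assumption (A) and a central limit theorem for independent variables. Finally, to pass from the martingale relations for $\mc{M}^N$ and its square to the limiting martingales $\mc{M}_t(H)$ and $\mc{N}_t(H)$, I would use the convergence of the finite-dimensional distributions together with uniform integrability guaranteed by a fourth-moment bound on $\mc{M}^N_t(H)$.

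The step I expect to be the main obstacle is the identification of the limiting quadratic variation: because the magnetization of the voter model is not locally conserved and the static correlations decay only slowly, the replacement of $(\eta_s(x)-\eta_s(y))^2$ by its local average is far more delicate than in the exclusion process, and controlling it requires sharp estimates on the meeting and coalescence probabilities of the dual random walks with slow bonds in dimension $d \geq 4$. This is precisely where the invariance principle of Erhard \emph{et al.} and the restriction $d \geq 4$ (through the finiteness of $\gamma_d$ and the summability of meeting-probability tails) become indispensable, and it also explains the technical gap $1 < \beta \leq 3/2$ left open, where the boundary coalescence estimates are not yet strong enough.
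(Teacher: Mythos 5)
Your overall architecture matches the paper's: Dynkin martingale decomposition, the space $\mc{S}_\beta(\R^d)$ to handle the membrane, identification of the quadratic variation via duality with coalescing random walks (yielding the constant $4d(1-\gamma_d)$), Mitoma-type reduction of tightness to the real-valued processes $\mc{Y}^N_t(H)$, and uniqueness via Remark \ref{remark uniquness of OU}. However, there is a genuine gap in your treatment of the drift. You assert that the defining relations of $\mc{S}_\beta$ make the boundary terms ``assemble into'' the Robin/Neumann operators, and you attribute the restriction $\beta>3/2$ to the identification of the quadratic variation. Both points are off. The relations on $H$ only cancel the \emph{leading-order} boundary contributions exactly; after Taylor expansion there remain terms of the form $N^{-1-d/2}\sum_{x_1=0,1}\bar{\eta}_s(x)R_H(x)$ with $R_H$ of order one and, for $\beta>1$, the slow-bond term \eqref{fluc-3}, whose prefactor is $\alpha N^{1-\beta-d/2}$ multiplying an $O(1)$ jump of $H$ across the membrane (functions in $\mc{S}_\beta$ are discontinuous there). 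Showing that these vanish requires a quantitative variance bound on time-integrated boundary fields, namely the paper's Theorem \ref{theorem 5.2.0}: $\vari\big(\int_0^t\sum_{x_1=0}\bar{\eta}_s(x)H(\tfrac{x}{N})\,ds\big)\leq CN^{d+1}$, proved by bounding $\E[\bar{\eta}_s(x)\bar{\eta}_{s'}(y)]$ by the hitting probability $\Gamma((x-y)^\perp)\leq C|(x-y)^\perp|^{3-d}$ of the transverse walk. Plugging this into \eqref{fluc-3} gives variance $CN^{3-2\beta}$, and this drift estimate --- not the quadratic variation --- is the sole source of the restriction $\beta>3/2$: the convergence of the quadratic variation (Theorem \ref{theorem 5.2.1}) holds for \emph{every} $\beta\geq 0$, and Remark \ref{rmk5.3} states that sharpening $CN^{d+1}$ to $CN^{d}$ would close the window $1<\beta\leq 3/2$. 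Without this lemma (or an equivalent), your drift analysis cannot be completed, and your explanation of why $\beta\in(1,3/2]$ is excluded is incorrect.

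Two smaller slips. First, your Dynkin formula omits the $\partial_s$ term: since $\mc{Y}^N_s(H)$ is centered by the time-dependent mean $\E[\eta_s(x)]$, the compensator must be $\int_0^t(N^2\gen+\partial_s)\mc{Y}^N_s(H)\,ds$, as in \eqref{fluc-m}; as you wrote it, $\mc{M}^N_t(H)$ is not a martingale. Second, no central limit theorem for the initial field is needed: with the normalization $N^{-(1+d/2)}$ (stronger than the usual $N^{-d/2}$, a feature of the voter model's slowly decaying correlations), Assumption (A) gives $\vari(\mc{Y}^N_0(H))=O(N^{-2})$, so $\mc{Y}^N_0(H)\to 0$ in $L^2$, consistent with the condition $\mc{Y}_0=0$ in \eqref{ou}; your claim that it converges to a nondegenerate Gaussian field with ``the correct covariance'' is mistaken.
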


\begin{remark}
The above results should also hold for $0 < \beta < 1$ and $1 < \beta \leq 3/2$.  However, this is beyond the techniques of the present article. See Remark \ref{rmk5.1} and \ref{rmk5.3} for details.
\end{remark}

According to Remark \ref{remark uniquness of OU}, we have the following corollary of Theorem \ref{thm:fluctuation}.

\begin{corollary}
Assume $\beta = 1$ or $\beta > 3/2$. For any $H\in \mathcal{S}_\beta (\R^d)$ and $t>0$, $\mc{Y}^N_t(H)$ converges in distribution to
\[
\mc{N}\left(0, 4d(1-\gamma_d)\int_0^t\int_{\R^d}\rho(\tau,u)(1-\rho(\tau, u))
\left(T_{\beta,t-\tau}H(u)\right)^2dud\tau\right)
\]
as $N\rightarrow+\infty$, where $\mc{N}(\mu, \sigma^2)$ is the normal distribution with mean $\mu$ and variance $\sigma^2$.
\end{corollary}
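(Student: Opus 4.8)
The plan is to deduce this corollary directly from Theorem~\ref{thm:fluctuation} and the explicit characteristic function recorded in Remark~\ref{remark uniquness of OU}. There are two ingredients: a continuous-mapping argument that upgrades functional convergence of the whole field to convergence of a single fixed-time marginal $\mc{Y}^N_t(H)$, and a short computation showing that this marginal of the limiting field is Gaussian with exactly the stated variance.

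First I would fix $t>0$ and $H\in\mc{S}_\beta(\R^d)$ and consider the map $\Pi_{t,H}\colon D([0,T],\mc{S}_\beta^\prime(\R^d))\to\R$ given by $\omega\mapsto\omega(t)(H)$. By Theorem~\ref{thm:fluctuation}, the laws of $\{\mc{Y}^N_\cdot\}_{N\geq1}$ converge weakly, in the Skorohod topology, to the law of the unique solution $\{\mc{Y}_\cdot\}$ of \eqref{ou}, which belongs to $C([0,T],\mc{S}_\beta^\prime(\R^d))$. The coordinate projection $\omega\mapsto\omega(t)$ from $D([0,T],\mc{S}_\beta^\prime(\R^d))$ to $\mc{S}_\beta^\prime(\R^d)$ is continuous at every path that is continuous at $t$, and pairing with the fixed test function $H$ is continuous on $\mc{S}_\beta^\prime(\R^d)$ by definition of the dual topology; hence $\Pi_{t,H}$ is continuous at every continuous path. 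Since the limit is supported on $C([0,T],\mc{S}_\beta^\prime(\R^d))$, the set of discontinuities of $\Pi_{t,H}$ is null under the law of $\mc{Y}_\cdot$, and the continuous mapping theorem yields $\mc{Y}^N_t(H)\to\mc{Y}_t(H)$ in distribution on $\R$.

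It then remains to identify the law of $\mc{Y}_t(H)$. Applying the identity from Remark~\ref{remark uniquness of OU} with $s=0$ and using the initial condition $\mc{Y}_0=0$, the first term in the exponent vanishes and we obtain
\[
E\big[e^{\sqrt{-1}\,\mc{Y}_t(H)}\big]
=\exp\Big(-\frac{4d(1-\gamma_d)}{2}\int_0^t\int_{\R^d}\rho(\tau,u)\big(1-\rho(\tau,u)\big)\big(T_{\beta,t-\tau}H(u)\big)^2\,du\,d\tau\Big).
\]
This is precisely the characteristic function of a centered normal law with variance $\sigma^2=4d(1-\gamma_d)\int_0^t\int_{\R^d}\rho(\tau,u)(1-\rho(\tau,u))(T_{\beta,t-\tau}H(u))^2\,du\,d\tau$, so by uniqueness of characteristic functions $\mc{Y}_t(H)\sim\mc{N}(0,\sigma^2)$, and combined with the previous paragraph the corollary follows.

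The only point deserving care is the continuity of the time projection in the Skorohod topology: the evaluation map on $D([0,T],\cdot)$ is not continuous in general, but it \emph{is} continuous at paths that are continuous at the given time, and here the limiting field lies in $C([0,T],\mc{S}_\beta^\prime(\R^d))$, so the exceptional set carries no mass. This is the standard mechanism by which fixed-time marginals converge once functional convergence is available, and no additional estimates beyond Theorem~\ref{thm:fluctuation} and Remark~\ref{remark uniquness of OU} are required.
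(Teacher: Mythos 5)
Your proposal is correct and is essentially the argument the paper intends: the paper gives no separate proof, stating only that the corollary follows from Theorem \ref{thm:fluctuation} together with the characteristic function identity of Remark \ref{remark uniquness of OU}, which is exactly the two-step derivation (continuous mapping at a fixed time, then identification of the marginal law) that you carry out in detail. One small imprecision: the identity you invoke gives $E[e^{\sqrt{-1}\,\mc{Y}_t(H)}]$, i.e.\ the characteristic function of $\mc{Y}_t(H)$ only at the single frequency $\theta=1$, which by itself does not determine the law; to conclude you should apply the same identity to $\theta H$ for arbitrary $\theta\in\R$ (note $\theta H\in\mathcal{S}_\beta(\R^d)$ since the defining boundary conditions are linear, and $T_{\beta,t-\tau}(\theta H)=\theta\, T_{\beta,t-\tau}H$ by linearity of the semigroup), obtaining
\[
E\big[e^{\sqrt{-1}\,\theta\,\mc{Y}_t(H)}\big]
=\exp\Big(-\tfrac{\theta^2}{2}\,4d(1-\gamma_d)\int_0^t\int_{\R^d}\rho(\tau,u)\big(1-\rho(\tau,u)\big)\big(T_{\beta,t-\tau}H(u)\big)^2\,du\,d\tau\Big)
\]
for every $\theta$, which does pin down the Gaussian law. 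With that one-line amendment your proof is complete.
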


\section{Preliminaries}\label{sec:preliminaries}

\subsection{Duality}\label{subsec:duality}
In this subsection we recall the duality relationship introduced in \cite{Holley1975voter}.  Let $\{Y_{t,\beta}^N\}_{t\geq 0}$ be the random walk on the one dimensional integer lattice  $\mathbb{Z}$ with generator $\Omega_{N, \beta}$ given by
\[
\Omega_{N, \beta}f(i)=
\begin{cases}
	f(i+1)-f(i)+f(i-1)-f(i) &\text{~if~}i\neq 0, 1,\\
	\alpha N^{-\beta}\left[f(1)-f(0)\right]+f(-1)-f(0) &\text{~if~}i=0,\\
	\alpha N^{-\beta}\left[f(0)-f(1)\right]+f(2)-f(1) & \text{~if~}i=1
\end{cases}
\]
for every $f: \bb{Z} \rightarrow \bb{R}$. Roughly speaking, the above random walk jumps to its neighbor at rate one everywhere except across the bond $(0,1)$, where the rate is $\alpha N^{-\beta}$. We further denote by $\{U_t\}_{t\geq 0}$ the usual  simple symmetric random walk on $\mathbb{Z}$, i.e., the generator of $U_t$ is given by $\Omega_{N, \beta}$ with $(\alpha, \beta)$ replaced by $(1, 0)$. Let $\{X_{t,\beta}^N\}_{t\geq 0}$ be the random walk on $\mathbb{Z}^d$  that $\{\{X_{t, \beta}^N(i)\}_{t\geq 0}\}_{1\leq i\leq d}$ are independent, $\{X_{t, \beta}^N(1)\}_{t\geq 0}$ is a copy of $\{Y_{t,\beta}^N\}_{t\geq 0}$ and $\{X_{t,\beta}^N(i)\}_{t\geq 0}$ is a copy of $\{U_t\}_{t\geq 0}$ for $i=2,3,\ldots,d$, where $X_{t,\beta}^N(i)$ is the $i$-th coordinate of $X_{t, \beta}^N$. We write $X_{t,\beta}^N$ as $X_{t,\beta}^{N, x}$ when $X_{0,\beta}^{N}=x$.

For given $s>0$ and $x\in \mathbb{Z}^d$, we denote by $\{\hat{X}_{t,\beta}^{N,x,s}\}_{t\geq 0}$ the \emph{frozen} random walk on $\mathbb{Z}^d$ that $\hat{X}_{u,\beta}^{N,x,s}=x$ for $u\leq s$ and $\{\hat{X}_{s+t,\beta}^{N,x,s}\}_{t\geq 0}$ is an independent copy of $\{{X}_{t,\beta}^{N,x}\}_{t\geq 0}$.  Intuitively, the random walk $\{\hat{X}_{t,\beta}^{N,x,s}\}_{t\geq 0}$ stays frozen at site $x$ until time $s$, and then performs the random walk as $\{{X}_{t,\beta}^{N,x}\}_{t\geq 0}$ after time $s$. For given $x,y\in \mathbb{Z}^d, s>0$ and $\{X_{t,\beta}^{N,x}\}_{t\geq 0}, \{\hat{X}_{t,\beta}^{N,y,s}\}_{t\geq 0}$ which are independent, we define
\[
\tau_{x,y}^{N,\beta,s}=\inf\{u\geq s:~X_{u,\beta}^{N,x}=\hat{X}^{N,y,s}_{u,\beta}\}-s,
\]
i.e., $\tau_{x,y}^{N,\beta,s}$ is the time it takes for $X^{N,x}_{\cdot, \beta}$ and $\hat{X}^{N,y,s}_{\cdot, \beta}$ to meet after $\hat{X}^{N,y,s}_{\cdot, \beta}$ is unfrozen from $y$.

For given $x,y\in \mathbb{Z}^d$ and $s>0$, we denote by $\{\tilde{X}_{t,\beta,x}^{N,y,s}\}_{t\geq 0}$ the random walk on $\mathbb{Z}^d$ that
\[
\tilde{X}_{t,\beta,x}^{N,y,s}=
\begin{cases}
	\hat{X}_{t,\beta}^{N,y,s} &\text{~if~} t\leq s+\tau^{N,\beta,s}_{x,y},\\
	X_{t,\beta}^{N,x} &\text{~if~} t>s+\tau^{N,\beta,s}_{x,y}.
\end{cases}
\]
Note that $\{\tilde{X}_{t,\beta,x}^{N,y,s}\}_{t\geq 0}$ and $\{\hat{X}_{t,\beta}^{N,y,s}\}_{t\geq 0}$ have the same distribution but $\{\tilde{X}_{t,\beta,x}^{N,y,s}\}_{t\geq 0}$ is not independent of $\{X_{t,\beta}^{N,x}\}_{t\geq 0}$. The process $\left\{\left(X_{t,\beta}^{N,x}, \tilde{X}_{t,\beta,x}^{N,y,s}\right)\right\}_{t\geq 0}$ is the so-called \emph{coalescing} random walk.

To distinguish from the accelerated  voter model, let $\{\tilde{\eta}_t\}_{t\geq 0}$ be the process with generator given by $\mathcal{L}_N$. According to the duality-relationship between the voter model and the coalescing random walk given in \cite{Holley1975voter}, for given $x,y\in \mathbb{Z}^d$ and $t>s$,
\begin{equation}\label{equ duality2}
	\bb{P}\left(\tilde{\eta}_t(x)=1\right)=\sum_{u\in \mathbb{Z}^d}\bb{P}\left(X_{t,\beta}^{N,x}=u\right)\bb{P}(\tilde{\eta}_0(u)=1),
\end{equation}
and
\begin{multline}\label{equ duality1}
	\bb{P}\left(\tilde{\eta}_t(x)=\tilde{\eta}_s(y)=1\right)
	=\sum_{u\in \mathbb{Z}^d}\sum_{v\in \mathbb{Z}^d}\bb{P}\left(X_{t,\beta}^{N,x}=u, \tilde{X}_{t,\beta,x}^{N,y, t-s}=v\right)\bb{P}(\tilde{\eta}_0(u)=\tilde{\eta}_0(v)=1)\\
	=\sum_{u\in \mathbb{Z}^d}\bb{P}\left(X_{t,\beta}^{N,x}=u, \tau_{x,y}^{N,\beta,t-s}\leq s\right)P(\tilde{\eta}_0(u)=1)\\
	+\sum_{u\in \mathbb{Z}^d}\sum_{v\neq u}\bb{P}\left(X_{t,\beta}^{N,x}=u, \hat{X}_{t,\beta}^{N,y,t-s}=v, \tau_{x,y}^{N,\beta,t-s}>s\right)
	\bb{P}(\tilde{\eta}_0(u)=\tilde{\eta}_0(v)=1).
\end{multline}

Above, we also use $\bb{P}$ to denote the law of the process $\{\tilde{\eta}_t\}_{t\geq 0}$ and  the random walks since this will not cause confusion. We refer the readers to \cite{Holley1975voter} or \cite[Section 3.4]{liggettips} for proofs of the above duality relations.

\subsection{Invariance principle}\label{subsec:invariance}
In this subsection we recall the invariance principle given in \cite{Erhard2021RandomWalkwithSlowBondandSnappingOutBrownianMotion} of the random walk $\{Y_{t,\beta}^{N}\}_{t\geq 0}$ on $\mathbb{Z}$ with slow bond $(0,1)$. We denote by $\{B_t\}_{t\geq 0}$ the $1$-dimensional standard Brownian motion and write $B_t$ as $B_t^u$ when $B_0=u$. For any $\beta\geq 0$, we define $\{B_{t,\beta}\}_{t\geq 0}$ as follows. If $0\leq \beta<1$, then $B_{t,\beta}=B_t$, i.e.,  the standard Brownian motion. When $\beta>1$, then $B_{t,\beta}=|B_t^u|$ when $B_{0,\beta}=u\in [0^+, \infty)$ and $B_{t,\beta}=-|B_t^u|$ when $B_{0,\beta}=u\in(-\infty, 0^-]$, i.e., $B_{t,\beta}$ is the reflected Brownian motion. When $\beta=1$, then $B_{t,\beta}$ is the \emph{snapping out} Brownian motion with parameter $2\alpha$ introduced in \cite{Lejay2016SnappingOutBrownianMotion}. More precisely, for every $u\in (-\infty, 0^-]\cup[0^+, +\infty)$ and $f\in C_b\left((-\infty, 0^-]\cup[0^+, +\infty)\right)$,
\[
\mathbb{E}_u[f(B_{t,\beta})]=\mathbb{E}\left[\left(\frac{1+e^{-2\alpha L_t}}{2}\right)f\left({\rm sgn}(u)|B_t^u|\right)
+\left(\frac{1-e^{-2\alpha L_t}}{2}\right)f\left(-{\rm sgn}(u)|B_t^u|\right)\right],
\]
where $L_t$ is the local time of $\{B_t\}_{t\geq 0}$ at point $0$. We write $B_{t,\beta}$ (resp. $Y_{t,\beta}^{N}$) as $B_{t,\beta}^u$ (resp. $Y_{t,\beta}^{N, u}$) when $B_{0,\beta}=u$ (resp. $Y_{0,\beta}^{N}=u$). The following invariance principle is proved by Erhard {\it et al.} in \cite{Erhard2021RandomWalkwithSlowBondandSnappingOutBrownianMotion}.

\begin{theorem}[{\cite[Theorem 2.2]{Erhard2021RandomWalkwithSlowBondandSnappingOutBrownianMotion}}]\label{theorem invariance}  The following invariance principle holds for the one dimensional symmetric random walk $\{Y_{t,\beta}^{N}\}_{t \geq 0}$ with a slow bond $(0,1)$,
	\begin{enumerate}[(i)]
		\item for every $u\neq 0$ and $t\geq 0$, $\frac{Y_{tN^2, \beta}^{N,\lfloor uN\rfloor}}{N}$ converges weakly to $B_{2t,\beta}^u$ as $N\rightarrow+\infty$;
		
		\item for every $x \in \{1,2,\ldots\}$ and $t\geq 0$, $\frac{Y_{tN^2,\beta}^{N,x}}{N}$ converges weakly to $B_{2t,\beta}^{0^+}$ as $N\rightarrow+\infty$;
		
		\item for every $x\in \{-1,-2,\ldots\}$ and $t\geq 0$, $\frac{Y_{tN^2,\beta}^{N,x}}{N}$ converges weakly to $B_{2t,\beta}^{0^-}$ as $N\rightarrow+\infty$.
	\end{enumerate}
\end{theorem}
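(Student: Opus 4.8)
The plan is to prove (i) by the usual two steps, tightness of the laws of $\{Y^N_{tN^2,\beta}/N\}_{t\in[0,T]}$ in $D([0,T],\R)$ together with identification of every subsequential limit, and then to obtain (ii) and (iii) by pushing the rescaled starting point to the membrane from each side. All three regimes are controlled by the single quantity $\alpha N^{1-\beta}$, the effective conductance of the slow bond on the diffusive scale, and the entire difficulty sits at the bond $(0,1)$. I would identify the candidate limit generator $\mathcal G_\beta$ through the Dirichlet forms of the rescaled walks, which exposes the trichotomy transparently, and then pass to the limit in the martingale problem; solving the one-dimensional resolvent equation by hand gives an independent check of both the limit and the constants.

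The rescaled walk is reversible for $m_N=\tfrac1N\sum_i\delta_{i/N}$ on $\tfrac1N\Z$, with Dirichlet form
\[
\mathcal E_N(f,f)=N\sum_{i\in\Z}c^N_{i,i+1}\big(f(\tfrac{i+1}{N})-f(\tfrac iN)\big)^2,\quad c^N_{i,i+1}=\alpha N^{-\beta}\text{ if }(i,i+1)=(0,1)\text{ and }1\text{ otherwise.}
\]
For smooth $f$ the bulk bonds yield $\mathcal E_N(f,f)\to\int_{\R\setminus\{0\}}(f')^2\,dx$ (the acceleration factor $N^2$ cancels against $m_N$), so the bulk generator is $\partial_{xx}$, the generator of $B_{2t}$. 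The single membrane bond contributes exactly $\alpha N^{1-\beta}\big(f(\tfrac1N)-f(0)\big)^2$, and this term dictates everything. If $0\le\beta<1$ the prefactor diverges, so finite-energy limits must be continuous across $0$ and one gets $\int_\R(f')^2$ on $H^1(\R)$, i.e.\ Brownian motion with no boundary effect. If $\beta>1$ it vanishes, the two half-lines decouple, and one gets $\int_{\R\setminus\{0\}}(f')^2$ on $H^1(\R\setminus\{0\})$, i.e.\ reflected Brownian motion on each side. If $\beta=1$ it converges to $\alpha\big(f(0^+)-f(0^-)\big)^2$, producing $\int_{\R\setminus\{0\}}(f')^2+\alpha\big(f(0^+)-f(0^-)\big)^2$, which by \cite{Lejay2016SnappingOutBrownianMotion} is the Dirichlet form of snapping-out Brownian motion; tracking the factor $2$ from the time change identifies its parameter as $2\alpha$.

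Tightness I would get from Aldous's criterion: away from the bond the increments are those of a rate-$N^2$ simple walk, so $\E[(Y^N_{(t+h)N^2,\beta}/N-Y^N_{tN^2,\beta}/N)^2\mid\mathcal F_{tN^2}]\lesssim h$ uniformly in $N$ and in the starting point, and the slow bond only suppresses motion. To identify the limit I would pass to the limit in Dynkin's martingale tested against functions $f$ in a core of $\mathcal G_\beta$, that is, $f$ smooth on each half-line with $f'(0^+)=f'(0^-)=\alpha(f(0^+)-f(0^-))$ when $\beta=1$ (Neumann when $\beta>1$, smooth across $0$ when $\beta<1$). In the bulk $N^2\Omega_{N,\beta}f(i/N)\to\partial_{xx}f$; at the two bond sites the contributions are of order $N$ individually but carry opposite signs, and, integrated against the occupation measure of $\{0,1\}$---which is of order $N^{-1}$ and converges to the boundary local time---they combine, precisely because $f$ satisfies the boundary condition, into the correct local-time term, so the limit solves the martingale problem for $\mathcal G_\beta$. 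Tightness plus this identification yields (i); alternatively one may invoke Mosco (Kuwae--Shioya) convergence $\mathcal E_N\to\mathcal E_\beta$, whose only nontrivial ingredient is the membrane asymptotic above, and upgrade the resulting $L^2$ semigroup convergence to convergence from single starting points using uniform-in-$N$ heat-kernel bounds. Parts (ii) and (iii) then follow since the starting sites $x\ge1$ (resp.\ $x\le-1$) embed to $0^+$ (resp.\ $0^-$) and $u\mapsto\mathrm{Law}(B^u_{2t,\beta})$ is continuous up to the boundary in the limiting state space.

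The hard part is the $\beta=1$ boundary analysis: showing that the two divergent bond contributions in Dynkin's formula telescope against the occupation time of $\{0,1\}$ to give exactly the snapping-out boundary local-time term, and pinning the constant $2\alpha$, which requires relating the discrete local time at the bond to the local time $L_t$ entering Lejay's definition through $e^{-2\alpha L_t}$. A clean independent check, available because the problem is one-dimensional, is to solve $(\lambda-N^2\Omega_{N,\beta})u_N=g$ by exponential solutions on $\{i\le0\}$ and $\{i\ge1\}$ matched across the slow bond; letting $N\to\infty$ one recovers $(\lambda-\partial_{xx})u=g$ with boundary condition $u'(0^+)=u'(0^-)=\alpha(u(0^+)-u(0^-))$ at $\beta=1$ (Neumann for $\beta>1$, none for $\beta<1$), confirming both $\mathcal G_\beta$ and the parameter $2\alpha$.
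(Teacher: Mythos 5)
The first thing to note is that the paper does not prove this statement at all: Theorem \ref{theorem invariance} is imported from \cite{Erhard2021RandomWalkwithSlowBondandSnappingOutBrownianMotion}, and the only argument the paper supplies is the remark that parts $(ii)$ and $(iii)$, which are not listed in the main theorem of that reference, follow from its Lemma 5.2 and Eq.\,(5.11). So your attempt is not a variant of an argument in the paper; it is an attempt to reprove the cited result from scratch. As a program it is sensible, and your Dirichlet-form bookkeeping correctly exposes the trichotomy (effective conductance $\alpha N^{1-\beta}$ at the bond, hence free motion for $\beta<1$, decoupled reflection for $\beta>1$, and the jump term $\alpha\big(f(0^+)-f(0^-)\big)^2$ for $\beta=1$) with the right constants.

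However, there are two genuine gaps. First, the crux of the theorem, namely the $\beta=1$ identification of the limit as snapping-out Brownian motion with parameter $2\alpha$, is exactly the step you defer: you assert that the two $O(N)$ bond contributions in Dynkin's formula must telescope against the occupation time of $\{0,1\}$ into the correct local-time term, but this is the entire difficulty, and you give no mechanism for it, nor for relating the discrete visit count at the bond to the local time $L_t$ appearing in Lejay's definition so as to pin the constant $2\alpha$. The one-dimensional resolvent computation you propose could in principle be upgraded from a ``check'' to a proof (resolvent convergence plus tightness identifies the limit), but as written it is only a consistency check, so the identification step remains open. Second, your derivation of $(ii)$ and $(iii)$ from $(i)$ is not valid as stated. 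Part $(i)$ concerns starting points $\lfloor uN\rfloor$ at macroscopic distance from the bond, whereas $(ii)$ and $(iii)$ concern a fixed site $x$, i.e.\ microscopic distance $x/N\to 0$; convergence there does not follow from $(i)$ together with continuity of $u\mapsto \mathrm{Law}\big(B^u_{2t,\beta}\big)$ up to the boundary, because the limiting state space is disconnected at $0$ (for $\beta\geq 1$) and the walk started at $x=1$ interacts with the slow bond instantaneously on the diffusive time scale, so which ``side'' the limit starts on, and how it behaves, is governed by a separate boundary-layer analysis. One needs a uniform-in-$N$ equicontinuity or coupling estimate near the bond; this is precisely the content of Lemma 5.2 and Eq.\,(5.11) of \cite{Erhard2021RandomWalkwithSlowBondandSnappingOutBrownianMotion}, which is why the paper cites them separately for exactly these two parts.
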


Note that although statements $(ii)$ and $(iii)$ in the above theorem are not listed in the main theorem of \cite{Erhard2021RandomWalkwithSlowBondandSnappingOutBrownianMotion}, they are direct corollaries of Lemma 5.2 and Eq. (5.11) of \cite{Erhard2021RandomWalkwithSlowBondandSnappingOutBrownianMotion}.

\section{Hydrodynamic limits}\label{sec:hl}

\subsection{Proof Outline}\label{sec:outline}

In this subsection, we outline the proof of Theorem \ref{thm:hl}. The procedures are quite standard (\emph{cf.}\;\cite[Chapter 4]{klscaling} for example), and the main ingredients are the replacement lemmas proved in Subsection \ref{sec:replacement}.

Denote by $\mc{M}_+ (\R^d)$ the space of Radon measures on $\R^d$ endowed with the vague topology, that is, for a sequence $\{\nu^N\}_{N \geq 1},\,\nu \in \mc{M}_+ (\R^d)$, $\nu^N \rightarrow \nu$ as $N \rightarrow \infty$ if for every $f \in C_c (\R^d)$,
\[\lim_{N \rightarrow \infty} \<\nu^N,f\> = \<\nu,f\>.\]
For a configuration $\eta \in \Omega_d$, define the empirical measure $\pi^N (\eta) \in \mc{M}_+ (\R^d)$ as
\[\pi^N (\eta;du) = \frac{1}{N^d} \sum_{x \in \Z^d} \eta(x) \delta_{x/N} (du).\]
Put $\pi^N_t (du)= \pi^N (\eta_t;du)$.  Let $Q^N$ be the distribution on the path space $D([0,T],\mc{M}_+ (\R^d))$ associated to the process $\pi^N_t$ and the initial measure $\mu_N$.

By Lemma \ref{lem:tight},  the sequence $\{Q^N\}_{N \geq 1}$ is tight. Whence, any subsequence of $Q^N$ further has a subsequence that converges as $N \rightarrow \infty$, whose limit is denoted by $Q^*$.  Moreover, $Q^*$ is concentrated on trajectories which are absolutely continuous with respect to the Lebesgue measure. Denote by $\rho (t,u)$ the corresponding density.

Now we characterize the limit. Fix a test function $H: \R^d \rightarrow \R$ which will be specified later depending on whether $\beta < 1$ or $\beta \geq 1$. By Dynkin's martingale formula,
\begin{equation}\label{martingale}
	\pi^N_t (H) = \pi^N_0 (H) + \int_0^t N^2  \gen\< \pi^N_s,H\>  ds + M^N_t (H),
\end{equation}
where $M^N_t (H)$ is a martingale, whose quadratic variation is given by
\[\int_0^t \big\{N^2  \gen\< \pi^N_s,H\>^2 - 2 N^2  \< \pi^N_s,H\> \gen \< \pi^N_s,H\>\big\} \,ds.\]
A simple computation shows that the last line is bounded by $C_H N^{2-d}$ for some finite constant $C_H$, hence, by Doob's inequality,
\begin{equation}\label{matingale_vanish}
	\lim_{N \rightarrow \infty}\, \E \Big[\sup_{0 \leq t \leq T} \big(M^N_t (H)\big)^2\Big] = 0.
\end{equation}
A tedious but elementary computation shows that the integrand in Eq.  \eqref{martingale} is equal to
\begin{align}
	&N^{-d} \sum_{ x \in \Z^d} \sum_{i=2}^d \eta_s(x) \partial_{u_i}^2 H\big(\tfrac{x}{N}\big)  + N^{-d} \sum_{x_1 \neq  0,1} \eta_s(x) \partial_{u_1}^2 H\big(\tfrac{x}{N}\big)   +o_N (1) \label{int1} \\
	&+N^{1-d} \sum_{x_1 = 1} \eta_s(x) \partial_{u_1}^+  H\big(\tfrac{x}{N}\big)  - N^{1-d}\sum_{x_1 = 0} \eta_s(x) \partial_{u_1}^-  H\big(\tfrac{x}{N}\big)\label{int2} \\
	&+ \alpha N^{2-d-\beta}  \sum_{x_1 = 0} \eta_s(x) \Big(H\big(\tfrac{x+e_1}{N}\big) -H\big(\tfrac{x}{N}\big)\Big)
	+  \alpha N^{2-d-\beta} \sum_{x_1 = 1} \eta_s(x)\Big(H\big(\tfrac{x-e_1}{N}\big) - H\big(\tfrac{x}{N}\big)\Big).\label{int3}
\end{align}

For a configuration $\eta$ and a positive integer $k \in \N$, define the space average of $\eta$ over a box of size $k$ centered at $x$ as
\[\bar{\eta}^{k} (x) = \frac{1}{(2 k +1)^d} \sum_{|y-x| \leq k} \eta (y).\]
Similarly, the space averages over the right/left boxes are defined as
\[\bar{\eta}^{k,+} (x) = \frac{1}{|\Lambda^+_{x,k}|} \sum_{y \in \Lambda^+_{x,k}} \eta (y),
\quad \bar{\eta}^{k,-} (x) = \frac{1}{|\Lambda^-_{x,k}|} \sum_{y \in \Lambda^-_{x,k}} \eta (y),\]
where
\[\Lambda^+_{x,k} = \{y: |y-x| \leq k, y_1 \geq x_1\}, \quad \Lambda^-_{x,k} = \{y: |y-x| \leq k, y_1 \leq x_1\}.\]

Next we discuss the three cases respectively.

\subsubsection{The case $0 \leq \beta < 1$}  Take $H \in C_c^2 (\R^d)$.  Whence
\[\partial_{u_1}^+  H\big(\tfrac{x}{N}\big) = \partial_{u_1}^- H\big(\tfrac{x}{N}\big).\]
By Lemma \ref{lem:replacement}\,$(i)$,  we could replace $\eta_s (x)$ by $\bar{\eta}_s^{\epsilon N} (x)$ in \eqref{int2}. Note also that if $|x-y| = 1$, then
\[|\bar{\eta}^{\epsilon N} (x) - \bar{\eta}^{\epsilon N} (y)| \leq \frac{C}{\epsilon N}\]
for some finite constant $C$. Whence,  the time integral of the term \eqref{int2}  converges in probability to zero as $N \rightarrow \infty$.  The term \eqref{int3} vanishes in the limit in the same way.
Therefore, the limit $Q^*$ concentrates on trajectories whose densities $\rho (t,u)$ satisfy
\begin{equation*}
\int_{\R^d}	\rho (t,u) H(u) du = \int_{\R^d}	\rho_0 (u) H(u) du + \int_0^t \int_{\R^d}	\rho (s,u) \Delta H(u) du\,ds
\end{equation*}

\subsubsection{The case $\beta = 1$} Take $H \in \mathcal{C}$.  By Lemma \ref{lem:replacement}\,$(ii)$, we could replace the time integral of the term \eqref{int2} with the time integral of
\[N^{1-d} \sum_{x_1 = 1} \bar{\eta}_s^{\varepsilon N, +}(x) \partial_{u_1}^+  H\big(\tfrac{x}{N}\big)  - N^{1-d}\sum_{x_1 = 0} \bar{\eta}_s^{\varepsilon N, -} (x) \partial_{u_1}^-  H\big(\tfrac{x}{N}\big).\]
The term \eqref{int3} is handled in the same way. Observe that
\[\bar{\eta}_s^{\varepsilon N, +}(x) = \<\pi^N_s, \iota_{\varepsilon,x/N}^+\>+o_N (1), \quad
\bar{\eta}_s^{\varepsilon N, -}(x) = \<\pi^N_s, \iota_{\varepsilon,x/N}^-\>+o_N (1),\]
where
\begin{align*}
\iota_{\varepsilon,u}^+ (v) = 2^{-d+1} \varepsilon^{-d} \mathbbm{1} \{0 \leq v_1 - u_1 \leq \varepsilon, |v_i - u_i| \leq \varepsilon, 2 \leq i \leq d\},\\
\iota_{\varepsilon,u}^- (v) = 2^{-d+1} \varepsilon^{-d} \mathbbm{1} \{- \varepsilon \leq v_1 - u_1 \leq 0, |v_i - u_i| \leq \varepsilon, 2 \leq i \leq d\}.
\end{align*}
Therefore, by Eq. \eqref{equ average converges to trace}, as $N \rightarrow \infty$ and $\varepsilon \rightarrow 0$, the limit density $\rho(t,u)$ satisfies
\begin{multline*}
\int_{\R^d}	\rho (t,u) H(u) du = \int_{\R^d}	\rho_0 (u) H(u) du \\
+ \int_0^t \int_{\R^d}	\rho (s,u) \sum_{i=2}^d \partial_{u_i}^2 H(u) du\,ds + \int_0^t \int_{u_1 \neq 0} 	\rho (s,u)  \partial_{u_1}^2 H(u) du\,ds  \\
+ \int_0^t \int_{u_1 = 0} \rho_s (u^+) \partial_{u_1}^+ H(u) - \rho_s (u^-) \partial_{u_1}^- H(u) + \alpha (\rho_s (u^-)-\rho_s (u^+)) (H(u^+) - H(u^-))\,dS\,ds.
\end{multline*}

\subsubsection{The case $\beta > 1$} Take $H \in \mathcal{C}$.   This case is similar to the case $\beta = 1$. The difference is that the term \eqref{int3} converges in probability to zero as $N \rightarrow \infty$ since $\beta > 1$.
Therefore, the limit density $\rho(t,u)$ satisfies
\begin{multline*}
\int_{\R^d}	\rho (t,u) H(u) du = \int_{\R^d}	\rho_0 (u) H(u) du \\
+ \int_0^t \int_{\R^d}	\rho (s,u) \sum_{i=2}^d \partial_{u_i}^2 H(u) du\,ds + \int_0^t \int_{u_1 \neq 0} 	\rho (s,u)  \partial_{u_1}^2 H(u) du\,ds  \\
+ \int_0^t \int_{u_1 = 0} \rho_s (u^+) \partial_{u_1}^+ H(u) - \rho_s (u^-) \partial_{u_1}^- H(u) \,dS\,ds.
\end{multline*}

\medspace

In Lemma \ref{lem:energy}, we show that any limit $Q^*$ of the sequence $\{Q^N\}_{N\geq 1}$ is concentrated on trajectories whose densities belong to the space $ L^2 ([0,T], \mc{H}^1 (\R^d \backslash \{u_1 = 0\}))$ if $\beta \geq 1$.  Together with the above observations, $Q^*$ is concentrated on  trajectories whose densities are weak solutions to the corresponding hydrodynamic equations. Since the weak solution is unique, the limit $Q^*$ is uniquely determined. This concludes the proof.

\subsection{Replacement Lemma}\label{sec:replacement} The aim of this subsection is to prove the following replacement lemma.

\begin{lemma}[Replacement Lemma]\label{lem:replacement} For every $\delta>0$,
\begin{enumerate}[$(i)$]
	\item  if $0 \leq \beta < 1$, then for every $H \in C_c^2 (\R^d)$,
	\[		\lim_{\varepsilon \rightarrow 0}\,\limsup_{N \rightarrow \infty}\, \P \Big( \Big| \int_0^t N^{1-d} \sum_{x_1 = 1} \big(\eta_s (x) - \bar{\eta}_s^{\epsilon N} (x) \big) H \left(\tfrac{x}{N}\right) \,ds\Big| > \delta \Big)  = 0.\]
	The same result holds with the summation over $\{x_1 =1\}$ replaced by over $\{x_1 = 0\}$.
	
	\item if $\beta \geq 1$, then for  every $H \in \mathcal{C}$
	\[	\lim_{\varepsilon \rightarrow 0}\,\limsup_{N \rightarrow \infty}\, \P \Big( \Big| \int_0^t N^{1-d} \sum_{x_1 = 1} \big(\eta_s (x) - \bar{\eta}_s^{\epsilon N,+} (x) \big) H \left(\tfrac{x}{N}\right) \,ds\Big| > \delta \Big)  = 0.\]
The same result holds with the summation over $\{x_1 = 1\}$ replaced by over $\{x_1 = 0\}$, and with $\bar{\eta}_s^{\epsilon N,+} (x)$ replaced by $\bar{\eta}_s^{\epsilon N,-} (x)$.
\end{enumerate}
\end{lemma}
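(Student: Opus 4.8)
The plan is to bypass the entropy method---the invariant measures of the voter model with a slow bond are not explicit, so the usual Dirichlet-form route is unavailable---and instead reduce the whole statement to a second-moment estimate computed through the duality with coalescing random walks. By Markov's inequality it suffices to prove, for each of the two cases, that
\[
\lim_{\epsilon\to0}\ \limsup_{N\to\infty}\ \E\Big[\Big(\int_0^t N^{1-d}\!\!\sum_{x_1=1}\big(\eta_s(x)-\bar\eta_s^{\epsilon N}(x)\big)H\big(\tfrac xN\big)\,ds\Big)^2\Big]=0,
\]
and similarly for the slice $\{x_1=0\}$ and, when $\beta\geq1$, for the one-sided averages $\bar\eta_s^{\epsilon N,\pm}$. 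The first move is to decompose $\eta_s(x)=g_s(x)+\omega_s(x)$ into its mean $g_s(x)=\E[\eta_s(x)]$ and the centered field $\omega_s(x)=\eta_s(x)-g_s(x)$, and to estimate the resulting deterministic and fluctuating contributions separately.

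For the deterministic part I would use that the one-point function evolves according to the dual random walk: by \eqref{equ duality2}, $g_s(x)=\E\big[\rho_0(X^{N,x}_{N^2s}/N)\big]$. Since $\rho_0$ is globally Lipschitz with bounded first-coordinate derivative (Assumption (A)), on each side of the membrane the macroscopic gradient of $g_s$ is bounded---consistent with the boundedness of $\partial^{\pm}_{u_1}\rho$ in the limit---and a coupling of the walks started at neighboring sites on the same side (equivalently, the discrete regularity of the slow-bond heat semigroup) gives $|g_s(x)-g_s(z)|\leq C\epsilon$ for every $z$ in the averaging box. Summing over the box yields $|g_s(x)-\bar g_s^{\epsilon N}(x)|\leq C\epsilon$, so the deterministic contribution to the time integral is $O(\epsilon)$ and vanishes as $\epsilon\to0$, uniformly in $N$. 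Here it is essential that for $\beta\geq1$ one uses the one-sided averages: the profile $g_s$ may develop a macroscopic jump across the membrane (reflecting the value gap $\rho(s,u^+)-\rho(s,u^-)$ permitted by the boundary conditions), and the one-sided box $\Lambda^{\pm}_{x,\epsilon N}$ is precisely what keeps the average on a single side and never crosses that jump.

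The heart of the argument is the centered field. Expanding the square produces a double time integral of the covariances $\mathrm{Cov}\big(\eta_s(x),\eta_{s'}(x')\big)$ and of their spatial averages. Applying the two-time duality \eqref{equ duality1} to $\E[\eta_s(x)\eta_{s'}(x')]$, the two dual walks run independently until they meet and then coalesce, so the non-meeting contribution factorizes and cancels against $g_s(x)g_{s'}(x')$; what survives is supported on the event that the two walks meet. Since $\rho_0$ is bounded, this yields
\[
\big|\mathrm{Cov}\big(\eta_s(x),\eta_{s'}(x')\big)\big|\leq C\,\mathfrak p_N(x,x'),
\]
where $\mathfrak p_N(x,x')$ is the probability that the two independent dual slow-bond walks started at $x$ and $x'$ ever meet; the same bound controls the averaged covariances. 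Substituting this into the second moment reduces everything to estimating $N^{2-2d}\sum_{x_1=1}\sum_{x'_1=1}\mathfrak p_N(x,x')$, up to the bounded factors coming from $H$.

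The main obstacle, and the place where the dimension restriction enters, is thus the meeting probability of two independent slow-bond walks. I would bound it by discarding the first coordinate entirely: a meeting forces the transverse coordinates to coincide, so $\mathfrak p_N(x,x')$ is at most the probability that the transverse difference walk---a genuine symmetric random walk on $\Z^{d-1}$ with no slow bond---ever visits the origin. For $d\geq4$ we have $d-1\geq3$, this walk is transient, and the classical Green's-function estimate gives $\mathfrak p_N(x,x')\leq C|x-x'|^{3-d}$ for $x,x'$ in a common slice (the diagonal being handled by $\mathfrak p_N\leq1$). Summing $|x-x'|^{3-d}$ over the $(d-1)$-dimensional slice contributes $O(N^2)$, and together with the $O(N^{d-1})$ choices of $x$ and the prefactor $N^{2-2d}$ this bounds the second moment by $CN^{3-d}\to0$. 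This is exactly why $d\geq4$ is needed: in $d=3$ the transverse walk is only two-dimensional and recurrent, the reduction fails, and one would need sharp meeting estimates for the slow-bond walk itself, which are not presently available---precisely the open problem flagged in the introduction. Combining the $O(\epsilon)$ bound for the mean with the vanishing of the fluctuation part, and recalling that the one-sided case $\beta\geq1$ is identical with $\Lambda^{\pm}$ in place of the full box, completes the proof.
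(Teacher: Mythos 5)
Your overall architecture (Chebyshev's inequality, then a split of the second moment into squared mean plus variance, both controlled through the coalescing-random-walk duality) matches the paper's, and your treatment of the centered field is correct: bounding the covariance by the probability that the two dual walks ever meet, projecting onto the transverse coordinates, and using the Green's-function bound $\Gamma(z)\le C|z|^{3-d}$ in dimension $d-1\ge 3$ gives
$N^{2-2d}\sum_{x,x'}|(x-x')^{\perp}|^{3-d}=O(N^{3-d})\to 0$.
This is in fact the same computation as the paper's Theorem \ref{theorem 5.2.0}, and it is quantitatively sharper than what the paper uses inside Lemma \ref{lem:replacement}: there the paper exploits \emph{temporal} decorrelation (Eq.\;\eqref{equ 4.3.4 covariance vanish}, i.e.\ the covariance between microscopic times $sN^2$ and $s'N^2$ with $s\neq s'$ vanishes uniformly), while you use \emph{spatial} Green's-function decay; both rest on the same transience input $d-1\geq 3$, and both are valid.

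The gap is in the deterministic part. You assert the uniform-in-$N$ discrete Lipschitz bound $|g_s(x)-g_s(z)|\le C|x-z|/N$ for $z$ in the (one-sided) box, justified by ``a coupling of the walks started at neighboring sites on the same side.'' No such simple coupling exists: the slow-bond walk is not translation invariant, so two walks started at $x$ and $z$ cannot be made to perform the same jumps --- their rates disagree as soon as exactly one of them is adjacent to the membrane. Concretely, when the lower walk sits at the slow bond while the other is in the bulk, the latter makes uncompensated jumps at rate $1-\alpha N^{-\beta}$, and over the diffusive horizon $sN^2$ the accumulated mismatch is governed by the local time at the bond, which is of order $N$; moreover, for $\beta=1$ one of the two walks crosses the membrane with probability of order one while the other need not, after which their separation can become macroscopic. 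So your key estimate is a substantive claim, not a consequence of a coupling; note that even its continuum analogue --- the bound \eqref{equ 5.5} on the increments of $u\mapsto\E[\rho_0(W^u_{s,\beta})]$, proved in the paper for the energy estimate --- requires a lengthy local-time/hitting-density analysis and is established only \emph{away} from the membrane, whereas you need the bound at the sites $x_1\in\{0,1\}$ themselves. The paper avoids needing any such uniform-in-$N$ regularity by taking the limits in the opposite order: first $N\to\infty$, using the invariance principle (Theorem \ref{theorem invariance}) together with the duality identity $\E[\eta_s(x)]=\E\big[\rho_0\big(X^{N,x}_{sN^2,\beta}/N\big)\big]$ to replace the discrete means and their box averages by expectations of $\rho_0$ under the limiting process $W_{s,\beta}$; and only then $\epsilon\to 0$, using the continuity of $v\mapsto\E\big[\rho_0\big(W^{(0^+,u)+v}_{s,\beta}\big)\big]$ as $v\downarrow 0$, which is read off from the explicit construction of the reflected/snapping-out Brownian motion. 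Replacing your uniform gradient estimate by this two-step limit repairs the argument; the rest of your proof then goes through.
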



We only prove statement $(ii)$ since $(i)$ could be handled with in the same way.

\proof[Proof of Lemma \ref{lem:replacement}, $(ii)$]  By Chebyshev's inequality, we only need to show that
\begin{equation}\label{equ 4.3.1}
\lim_{\varepsilon\rightarrow 0}\limsup_{N\rightarrow+\infty}\mathbb{E} \left[\left(N^{1-d}\int_0^t\sum_{x_1=1}\left(\eta_s (x) - \bar{\eta}_s^{\epsilon N,+} (x) \right)H\left(\tfrac{x}{N}\right)ds\right)^2\right]=0.
\end{equation}
We start with writing the expectation above  as
${\rm \uppercase\expandafter{\romannumeral1}}^2+{\rm \uppercase\expandafter{\romannumeral2}}$, where
\[
{\rm \uppercase\expandafter{\romannumeral1}}=\mathbb{E} \left[ N^{1-d}\int_0^t\sum_{x_1=1}\left(\eta_s (x) - \bar{\eta}_s^{\epsilon N,+} (x) \right)H\left(\tfrac{x}{N}\right)ds\right]
\]
and
\[
{\rm \uppercase\expandafter{\romannumeral2}}={\rm Var}\left(N^{1-d}\int_0^t\sum_{x_1=1}\left(\eta_s (x) - \bar{\eta}_s^{\epsilon N,+} (x) \right)H\left(\tfrac{x}{N}\right)ds\right).
\]
Then, Eq. \eqref{equ 4.3.1} holds if we can check that
\begin{equation}\label{equ 4.3.2}
\lim_{\epsilon\rightarrow 0}\limsup_{N\rightarrow+\infty}{\rm \uppercase\expandafter{\romannumeral1}}=0
\end{equation}
and
\begin{equation}\label{equ 4.3.3}
\lim_{N\rightarrow+\infty}{\rm \uppercase\expandafter{\romannumeral2}}=0
\end{equation}
for every $\epsilon>0$.

\medspace

To check Eq. \eqref{equ 4.3.2}, we define
\[
W_{t,\beta}=\left(W_{t,\beta}(1), W_{t,\beta}(2),\ldots, W_{t,\beta}(d)\right),
\]
as the $\left((-\infty, 0^-]\cup[0^+, +\infty)\right)\times \mathbb{R}^{d-1}$-valued stochastic process, where $\{\{W_{t,\beta}(i)\}_{t\geq 0}\}_{1\leq i\leq d}$ are independent, $\{W_{t,\beta}(1)\}_{t\geq 0}$ is an independent copy of $B_{2t,\beta}$ introduced in the last subsection, and $W_{t,\beta}(i)$ is an independent copy of $\{B_{2t}\}_{t\geq 0}$ for $2\leq i\leq d$. We write $W_{t,\beta}$ as $W_{t,\beta}^u$ when $W_{0,\beta}=u$. By Eq. \eqref{equ duality2} and {\bf Assumption (A)},
\[
\mathbb{E} [\eta_s(x)]=\mathbb{E} \left[\rho_0\left(\tfrac{X_{sN^2, \beta}^{N,x}}{N}\right)\right]
\quad \text{and} \quad \mathbb{E} [\bar{\eta}_s^{\varepsilon N, +}(x)]=\frac{1}{|\Lambda^+_{x,\varepsilon N}|} \sum_{y \in \Lambda^+_{x,\varepsilon N}}\mathbb{E} \left[\rho_0\left(\tfrac{X_{sN^2, \beta}^{N,y}}{N}\right)\right]
\]
for every $x\in \mathbb{Z}^d$. Hence, by Theorem \ref{theorem invariance},
\begin{multline*}
\limsup_{N\rightarrow+\infty}{\rm \uppercase\expandafter{\romannumeral1}}
=\int_0^t \int_{u\in \mathbb{R}^{d-1}} H(0,u)\mathbb{E}\left[\rho_0\left(W_{s,\beta}^{(0^+, u)}\right)\right]du ds\\
-\int_0^t   \int_{u\in \mathbb{R}^{d-1}}H(0, u) \left\{ \frac{1}{\varepsilon(2\varepsilon)^{d-1}}\int_{v \in [0,\varepsilon] \times [-\varepsilon,\varepsilon]^{d-1}} \mathbb{E}\left[\rho_0\left(W_{s,\beta}^{(0^+,u)+v} \right) \right]dv \right\}\,du\,ds,
\end{multline*}
According to the definition of $W_{t,\beta}$,
\[
\lim_{v_1\downarrow 0, \atop v_i\rightarrow 0\text{~for~}2\leq i\leq d}\mathbb{E}\left[\rho_0\left(W_{s,\beta}^{(0^+,u)+v}\right)\right]
=\mathbb{E} \left[\rho_0\left(W_{s,\beta}^{(0^+,u)}\right)
\right] \]
for every $u\in \mathbb{R}^{d-1}$ and hence Eq. \eqref{equ 4.3.2} holds.

\medspace

Now we check Eq. \eqref{equ 4.3.3}. According to the fact that the covariance operator is bilinear and $\eta_t=\tilde{\eta}_{tN^2}$, it is easy to check that Eq. \eqref{equ 4.3.3} is a direct corollary of the following claim.

\textbf{Claim}. For every $s>0$,
\begin{equation}\label{equ 4.3.4 covariance vanish}
\lim_{t\rightarrow+\infty} \sup_{N\geq 1, x,y\in Z^d}{\rm Cov}\left(\tilde{\eta}_t(x), \tilde{\eta}_s(y)\right)=0.
\end{equation}

\medspace

Now we prove the claim. By Eq. \eqref{equ duality2} and {\bf Assumption (A)}, for $s < t$,
\begin{multline*}
\bb{P}\left(\tilde{\eta}_t(x)=1\right)\bb{P}\left(\tilde{\eta}_s(y)=1\right)
=\sum_{u\in \mathbb{Z}^d}\sum_{v\in \mathbb{Z}^d}\bb{P}\left(X_{t,\beta}^{N,x}=u\right)\bb{P}\left(X_{s,\beta}^{N,y}=v\right)\rho_0\left(\tfrac{u}{N}\right)\rho_0\left(\tfrac{v}{N}\right)\\
=\sum_{u\in \mathbb{Z}^d}\sum_{v\in \mathbb{Z}^d}\bb{P}\left(X_{t,\beta}^{N,x}=u\right)\bb{P}\left(\hat{X}_{t, \beta}^{N,y,t-s}=v\right)\rho_0\left(\tfrac{u}{N}\right)\rho_0\left(\tfrac{v}{N}\right)\\
=\sum_{u\in \mathbb{Z}^d}\sum_{v\in \mathbb{Z}^d}\bb{P}\left(X_{t,\beta}^{N,x}=u, \hat{X}_{t, \beta}^{N,y,t-s}=v\right)\rho_0\left(\tfrac{u}{N}\right)\rho_0\left(\tfrac{v}{N}\right).
\end{multline*}
Then, by Eq. \eqref{equ duality1},
\[
{\rm Cov}\left(\tilde{\eta}_t(x), \tilde{\eta}_s(y)\right)={\rm \uppercase\expandafter{\romannumeral3}}
+{\rm \uppercase\expandafter{\romannumeral4}}+{\rm \uppercase\expandafter{\romannumeral5}},
\]
where
\[
{\rm \uppercase\expandafter{\romannumeral3}}=\sum_{u\in \mathbb{Z}^d}\bb{P}\left(X_{t,\beta}^{N,x}=u, \tau_{x,y}^{N,\beta,t-s}\leq s\right)\rho_0\left(\tfrac{u}{N}\right),
\]
\[
{\rm \uppercase\expandafter{\romannumeral4}}=-\sum_{u\in \mathbb{Z}^d}\sum_{v\neq u}\bb{P}\left(X_{t,\beta}^{N,x}=u, \hat{X}_{t, \beta}^{N,y,t-s}=v, \tau_{x,y}^{N,\beta, t-s}\leq s\right)\rho_0\left(\tfrac{u}{N}\right)\rho_0\left(\tfrac{v}{N}\right),
\]
and
\[
{\rm \uppercase\expandafter{\romannumeral5}}=-\sum_{u\in \mathbb{Z}^d}\bb{P}\left(X_{t,\beta}^{N,x}=\hat{X}_{t, \beta}^{N,y,t-s}=u\right)\rho^2_0\left(\tfrac{u}{N}\right).
\]
Since $\tilde{\eta}_t(x), \tilde{\eta}_s(y)$ are positive correlated under {\bf Assumption (A)} and ${\rm \uppercase\expandafter{\romannumeral4}}, {\rm \uppercase\expandafter{\romannumeral5}}\leq 0$,
\[
0\leq {\rm Cov}\left(\tilde{\eta}_t(x), \tilde{\eta}_s(y)\right)\leq {\rm \uppercase\expandafter{\romannumeral3}}
\leq \bb{P}\left(\tau_{x,y}^{N,\beta, t-s}<+\infty\right).
\]
According to Markov property,
\[
\bb{P}\left(\tau_{x,y}^{N,\beta, t-s}<+\infty\right)=\sum_{u\in \mathbb{Z}^d}\bb{P}\left(X_{t-s,\beta}^{N,x}=u\right)\bb{P}\left(\tau_{u,y}^{N,\beta, 0}<+\infty\right).
\]
For any $x=(x_1, x_2, \ldots, x_d)\in \mathbb{Z}^d$, we define $x^\bot=(x_2, \ldots, x_d)\in \mathbb{Z}^{d-1}$. Denote by  $\{V_t\}_{t\geq 0}$  the symmetric simple random walk on $\mathbb{Z}^{d-1}$. For $\omega,\varpi\in \mathbb{Z}^{d-1}$, let
\[p_t(\omega, \varpi) =  \bb{P}\left(V_t=\varpi\big|V_0=\omega\right), \quad \Gamma(\omega) = \bb{P}\left(V_t=0\text{~for some~}t\geq 0\big|V_0=\omega\right).\]
According to the definition of $\{X_{t,\beta}^{N,x}\}_{t\geq 0}$, $\left\{\left(X_{t,\beta}^{N,x}-\hat{X}_{t,\beta}^{N,y,0}\right)^{\bot}\right\}_{t\geq 0}$ is a copy of $\{V_{2t}\}_{t\geq 0}$ with $V_0=(x-y)^\bot$. As a result,
$\bb{P}\left(\tau_{u,y}^{N,\beta, 0}<+\infty\right)\leq \Gamma\left((u-y)^{\bot}\right)$ and hence
\[
\bb{P}\left(\tau_{x,y}^{N,\beta, t-s}<+\infty\right)
\leq\sum_{u\in \mathbb{Z}^d}\bb{P}\left(X_{t-s,\beta}^{N,x}=u\right)\Gamma\left((u-y)^{\bot}\right).
\]
Without confusion, we also use $|\cdot|$ to denote the $l^1$-norm on $\mathbb{R}^{d-1}$. Then $\lim_{|\omega| \rightarrow+\infty}\Gamma(\omega)=0$ since $d-1\geq 3$. Therefore, for every $\delta>0$, there exists $M=M(\delta)$ such that $\Gamma(\omega)\leq \delta$  if $|\omega|>M$. As a result,
\[\bb{P}\left(\tau_{x,y}^{N,\beta, t-s}<+\infty\right)
\leq \delta+\sum_{u: |(u-y)^{\bot}|\leq M}\bb{P}\left(X_{t-s, \beta}^{N, x}=u\right).\]
Since $\left\{\left(X_{t,\beta}^{N, x}\right)^{\bot}\right\}_{t\geq 0}$ is a copy of $\{V_t\}_{t\geq 0}$ with $V_0=x^{\bot}$,
\[\bb{P}\left(\tau_{x,y}^{N,\beta, t-s}<+\infty\right)\leq \delta+\bb{P}\left(|V_{t-s}-y^\bot|\leq M\big|V_0=x^{\bot}\right)
\leq  \delta+(2M+1)^{d-1}p_{t-s}(0, 0).\]
As a result,
\[
\limsup_{t\rightarrow+\infty}\sup_{N\geq 1, x,y\in \mathbb{Z}^d}{\rm Cov}\left(\tilde{\eta}_t(x), \tilde{\eta}_s(y)\right)
\leq \delta+(2M+1)^{d-1}\lim_{t\rightarrow+\infty}p_{t-s}(0,0)=\delta.
\]
Since $\delta$ is arbitrary, Eq. \eqref{equ 4.3.4 covariance vanish} holds. As we have pointed out above, Eq. \eqref{equ 4.3.4 covariance vanish} implies Eq. \eqref{equ 4.3.3} and the proof is completed.
\qed


\subsection{Tightness.} The aim of this subsection is to  prove the following tightness result.

\begin{lemma}[Tightness]\label{lem:tight}
	The sequence $\{Q^N\}_{N \geq 1}$  is tight. Moreover, any limit point of $Q^N$ is concentrated on trajectories which are absolutely continuous with respect to the Lebesgue measure.
\end{lemma}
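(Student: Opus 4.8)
The plan is to follow the standard scheme of \cite[Chapter 4]{klscaling}. Since $\mc{M}_+ (\R^d)$ equipped with the vague topology is a Polish space whose elements are determined by integration against functions in $C_c^\infty (\R^d)$, tightness of $\{Q^N\}_{N \geq 1}$ in $D([0,T], \mc{M}_+ (\R^d))$ reduces to showing that, for each $H$ in a countable dense subset of $C_c^\infty (\R^d)$, the sequence of real-valued processes $\{\pi^N_\cdot (H)\}_{N \geq 1}$ is tight in $D([0,T], \R)$. For each fixed $t$ the variable $\pi^N_t (H)$ is bounded in absolute value by $\|H\|_\infty N^{-d} \#\{x : x/N \in \mathrm{supp}\,H\}$, which is uniformly bounded in $N$, so the one-dimensional marginals are tight; it then remains to verify Aldous' criterion, namely that for every $\varepsilon > 0$,
\[\lim_{\delta \to 0}\, \limsup_{N \to \infty}\, \sup_{\tau,\; \theta \leq \delta}\, \P\big( |\pi^N_{\tau + \theta} (H) - \pi^N_{\tau} (H)| > \varepsilon \big) = 0,\]
the supremum being over stopping times $\tau \leq T$ and deterministic times $\theta \leq \delta$.

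To verify this, I would invoke the martingale decomposition \eqref{martingale}, which yields
\[\pi^N_{\tau + \theta} (H) - \pi^N_{\tau} (H) = \int_{\tau}^{\tau + \theta} N^2 \gen \< \pi^N_s, H\>\, ds + \big(M^N_{\tau + \theta} (H) - M^N_{\tau} (H)\big).\]
For the martingale part, the quadratic variation is bounded by $C_H N^{2-d}$, so by optional stopping $\E[(M^N_{\tau+\theta}(H) - M^N_\tau(H))^2] \leq C_H N^{2-d}$, which vanishes as $N \to \infty$ because $d \geq 4$; this is the same estimate underlying \eqref{matingale_vanish}. For the drift part, the essential point is that the integrand, computed in \eqref{int1}--\eqref{int3}, is bounded in absolute value by a constant $C_H$ uniformly in $s$ and $N$: the bulk term \eqref{int1} is $N^{-d}$ times an $O(N^d)$ sum, the boundary first-order terms \eqref{int2} are $N^{1-d}$ times sums over the $O(N^{d-1})$ sites of a hyperplane slice, and the membrane term \eqref{int3} is controlled by $\alpha C_H N^{1-\beta} \leq \alpha C_H$ for $\beta \geq 1$ (using that $H \in \mathcal{C}$ is bounded) and by $\alpha C_H N^{-\beta}$ for $0 \leq \beta < 1$ (using that $H \in C_c^2$ is Lipschitz, so the discrete gradient is $O(1/N)$). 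Hence the drift increment is bounded by $C_H \theta \leq C_H \delta$, and Aldous' criterion follows by Chebyshev's inequality.

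For the absolute continuity of limit points, I would argue by domination. For any non-negative $H \in C_c (\R^d)$, since $\eta_t (x) \in \{0,1\}$,
\[\pi^N_t (H) \leq \frac{1}{N^d} \sum_{x \in \Z^d} H\big(\tfrac{x}{N}\big) \xrightarrow{\;N \to \infty\;} \int_{\R^d} H(u)\, du.\]
Passing to a convergent subsequence and using continuity of $\pi \mapsto \pi(H)$ for the vague topology, any limit point $Q^*$ is concentrated on trajectories $\pi_t$ satisfying $\pi_t (H) \leq \int_{\R^d} H\,du$ for all non-negative $H \in C_c (\R^d)$; therefore $\pi_t \leq \mathrm{Leb}$ as measures, so $\pi_t$ is absolutely continuous with a density $\rho(t,\cdot)$ taking values in $[0,1]$.

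The step demanding the most care is the reduction from measure-valued to real-valued tightness, because $\mc{M}_+ (\R^d)$ is non-compact (the empirical measures carry unbounded total mass), so one must check that the per-test-function Aldous estimate, together with the uniform local mass bound above, indeed upgrades to tightness of the $\mc{M}_+(\R^d)$-valued processes in the vague topology. Apart from this, the only genuinely model-dependent point is the uniform bound on the membrane term \eqref{int3}, whose $N^{1-\beta}$ scaling must be handled separately in the regime $\beta < 1$ (where $H$ is continuous and one gains a factor $N^{-1}$ from the discrete gradient) and in the regime $\beta \geq 1$ (where $H \in \mathcal{C}$ may jump across $\{u_1=0\}$, so no gain is available but the power of $N$ is already favorable); the remaining estimates are routine.
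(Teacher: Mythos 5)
Your proposal is correct and follows essentially the same route as the paper: both reduce to per-test-function tightness, use the martingale decomposition \eqref{martingale}, kill the martingale part via the quadratic variation bound $C_H N^{2-d}$, bound the drift terms \eqref{int1}--\eqref{int3} uniformly by a constant $C_H$ (so their time integral over an interval of length $\delta$ is $O(\delta)$), and derive absolute continuity of limit points from $0 \leq \eta(x) \leq 1$. The only differences are cosmetic: you verify Aldous' criterion where the paper checks the equivalent conditions \eqref{tight_1}--\eqref{tight_2} from \cite[Chapter 4]{klscaling}, and you spell out the regime-dependent bound on the membrane term \eqref{int3}, which the paper subsumes in the statement that all three terms are bounded by $C_H$.
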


\begin{proof}
	The proof of tightness is standard, and so we only sketch the proof. The second statement in the lemma follows directly from the fact that $|\eta (x) | \leq 1$ for all $x$. To prove tightness, by \cite[Chapter 4]{klscaling}, we only need to prove for any $H \in C^2_c (\R^d)$,
	\begin{equation}\label{tight_1}
		\lim_{M \rightarrow \infty} \limsup_{N \rightarrow \infty} \P \Big( \sup_{0 \leq t \leq T}  |\<\pi^N_t,H\>| \geq M \Big)=0
	\end{equation}
	and for every $\epsilon > 0$,
	\begin{equation}\label{tight_2}
		\lim_{\delta \rightarrow 0} \limsup_{N \rightarrow \infty} \P \Big( \sup_{0 \leq t-s \leq \delta} \Big|\int_s^t  \<\pi^N_\tau,H\> d\tau\Big| \geq \epsilon \Big)=0.
	\end{equation}
Eq.\,\eqref{tight_1} follows from Chebyshev's inequality and the fact that $|\<\pi^N_t,H\>| \leq C_H$ for some finite constant $C_H$. To prove Eq.\,\eqref{tight_2}, we only need to prove it separately for the martingale $M^N_t (H)$ defined in Eq. \eqref{martingale} and the terms \eqref{int1}-\eqref{int3}.  The martingale term satisfies \eqref{tight_2} by Cauchy-Schwarz inequality and Eq.\,\eqref{matingale_vanish}. Observe that all the terms \eqref{int1}-\eqref{int3} are bounded by $C_H$ for some finite constant $C_H$, whence satisfy Eq.\,\eqref{tight_2}. This completes the proof.
\end{proof}

\subsection{Energy estimates.}
In this subsection, we shall prove the following result.

\begin{lemma}[Energy estimate]\label{lem:energy}
Fix $\beta \geq 1$. Any limit $Q^*$ of the sequence $\{Q^N\}_{N \geq 1}$ is concentrated on paths $\pi_t (du) = \rho (t,u) du$ such that $\rho (t,u) \in L^2 ([0,T], \mc{H}^1 (\R^d \backslash \{u_1 = 0\}))$.
\end{lemma}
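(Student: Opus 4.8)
The plan is to establish the energy estimate by exhibiting an explicit candidate for the weak derivative $\partial_{u_1}\rho$ (and the tangential derivatives $\partial_{u_i}\rho$, $2\le i \le d$) through a variational characterization of the $\mc{H}^1$-norm, and then to control the relevant microscopic quantities uniformly in $N$ using the duality and correlation estimates already developed in the proof of the Replacement Lemma. Concretely, I would use the standard dual formulation: a function $\rho(t,\cdot)$ lies in $\mc{H}^1(\R^d\backslash\{u_1=0\})$ with $L^2([0,T],\mc{H}^1)$-norm bounded by $K$ if and only if, for each coordinate direction $i$,
\[
\sup_{H} \left\{ \int_0^T\!\!\int_{\R^d\backslash\{u_1=0\}} \rho(s,u)\, \partial_{u_i} H(s,u)\, du\, ds - \tfrac12 \int_0^T\!\!\int_{\R^d\backslash\{u_1=0\}} H(s,u)^2\, du\, ds \right\} \le \tfrac{K}{2},
\]
where the supremum runs over smooth compactly supported test functions $H$ (supported away from the membrane for the $i=1$ direction, so that no boundary contribution appears). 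The task thus reduces to showing that the left-hand side, which is a lower semicontinuous functional of the trajectory, has an $N$-uniformly bounded expectation under $Q^N$, so that its expectation under any weak limit $Q^*$ is finite.

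The key steps, in order, are as follows. First, I would replace the continuum functional above by its microscopic counterpart: for the limit $Q^*$, the integral $\int \rho\, \partial_{u_i} H$ is the $N\to\infty$ limit of $N^{-d}\sum_{x}\eta_s(x)\,\partial_{u_i}H(x/N)$, which after summation by parts equals (up to $o_N(1)$ and a boundary layer near $\{x_1=0,1\}$) a term of the form $N^{1-d}\sum_x \big(\eta_s(x+e_i)-\eta_s(x)\big)H(x/N)$. Second, I would apply the elementary inequality $ab - \tfrac12 b^2 \le \tfrac12 a^2$ pointwise to pass from the linear-in-$H$ functional to a bound in terms of $\E\big[(\eta_s(x+e_i)-\eta_s(x))^2\big]$, suitably averaged. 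Third, and this is where the voter-model structure enters, I would estimate this discrete Dirichlet-type energy using duality: by the relation \eqref{equ duality1} and the covariance machinery already assembled, $\E[(\eta_s(x+e_i)-\eta_s(x))^2]$ is controlled by the meeting probability of two dual random walks started one lattice step apart, which carries the gradient decay needed to produce the factor $N^{2(1-d)}\cdot N^{d}=N^{2-d}$ after summation, compensating correctly so that the total expectation stays bounded uniformly in $N$. Finally, lower semicontinuity of the variational functional under weak (vague) convergence transfers the uniform bound from $Q^N$ to $Q^*$, yielding $\rho \in L^2([0,T],\mc{H}^1(\R^d\backslash\{u_1=0\}))$.

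The main obstacle, I expect, is the third step: obtaining the correct gradient estimate $\E[(\eta_s(x+e_i)-\eta_s(x))^2]$ with a bound that decays in a way compensating the large combinatorial factors from the sums over $x\in\Z^d$ and over test functions. Unlike the exclusion process, here there is no explicit invariant measure and no entropy-production bound, so the estimate must come entirely from the duality representation; one must show that the two dual walks started at neighboring sites meet with probability decaying appropriately in the transverse displacement, uniformly in $N$ and in the starting points, which is exactly where the transience of the $(d-1)$-dimensional projection (valid since $d-1\ge 3$) and the invariance principle of Theorem \ref{theorem invariance} for the first coordinate must be combined. A secondary technical point is handling the membrane itself: because the direction $i=1$ is treated with test functions supported on $\{u_1\ne 0\}$, the boundary terms \eqref{int2}--\eqref{int3} do not enter the energy functional, so it suffices to control the energy separately on each half-space $\{u_1>0\}$ and $\{u_1<0\}$, and I would carry out the estimate on each half-space, using that the dual walk behaves like a standard random walk away from the slow bond.
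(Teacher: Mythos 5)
Your starting point---the variational (dual) characterization of the $\mc{H}^1$-norm with test functions supported away from the membrane---is exactly the paper's reduction (Lemma \ref{lemma 5.3}), but the core of your argument, steps two and three, contains a fatal gap. After summation by parts the functional linear in $H$ reads $N^{-d}\sum_x \big[N(\eta_s(x+e_1)-\eta_s(x))\big]H(s,\tfrac{x}{N})$, so your pointwise inequality $ab-\tfrac12 b^2\le\tfrac12 a^2$ yields the bound $\tfrac{N^{2}}{2N^{d}}\sum_x(\eta_s(x+e_1)-\eta_s(x))^2$. Since $\eta$ is $\{0,1\}$-valued, $(\eta_s(x+e_1)-\eta_s(x))^2=\eta_s(x)+\eta_s(x+e_1)-2\eta_s(x)\eta_s(x+e_1)$ is a Bernoulli-type variable whose expectation does \emph{not} decay in $N$: by the duality computation of Lemma \ref{lemma 5.2.2} (equivalently Theorem \ref{theorem 5.2.1}), its spatial average converges to $4d(1-\gamma_d)\rho(1-\rho)$-type quantities, strictly positive wherever $0<\rho<1$; indeed the non-vanishing of precisely this quantity is what makes the noise in the fluctuation limit non-degenerate. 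Hence your microscopic Dirichlet-type energy is of order $N^2$, and no cancellation from duality can rescue it: two dual walks started at neighboring sites stay apart forever with probability $1-\gamma_d>0$, so there is no ``gradient decay'' for the squared discrete gradient. You have conflated $\E\big[(\eta_s(x+e_1)-\eta_s(x))^2\big]$ (order one) with $\big(\E[\eta_s(x+e_1)]-\E[\eta_s(x)]\big)^2$ (order $N^{-2}$ away from the membrane); only the latter is small, because the dual walk smooths the \emph{means} but not the configurations. This is exactly the structural difference from the exclusion process stressed in the introduction: the voter model's field is rough at the microscopic scale, and its regularity emerges only at the level of expectations.

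The paper's proof circumvents this in a way that pointwise microscopic bounds cannot. It first proves a concentration statement (Lemma \ref{lemma 5.4 variance}): any limit point $Q^*$ is supported on a single deterministic trajectory, $\rho(t,u)=E_{Q^*}[\rho(t,u)]$ a.s., obtained by showing ${\rm Var}(\pi^N_t(H))\to 0$ via transience of the $(d-1)$-dimensional transverse walk. This has two consequences your proposal lacks: it removes the need to interchange $E_{Q^*}$ and $\sup_H$ (your lower-semicontinuity step quietly requires a uniform bound on $\E[\sup_H(\cdots)]$, which second-moment estimates alone do not give), and it reduces the lemma to a regularity statement about the explicit deterministic profile $E_{Q^*}[\rho(s,u)]=\E\big[\rho_0(W^u_{s,\beta})\big]$ furnished by the dual diffusion. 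The key estimate \eqref{equ 5.5}---uniform boundedness of the $u_1$-difference quotients of this profile on compacts away from the membrane---is then proved by stochastic calculus for reflected ($\beta>1$) and snapping-out ($\beta=1$) Brownian motion (local times, hitting-time densities, strong Markov property), and Cauchy--Schwarz concludes. In short, the $\mc{H}^1$ regularity is inherited from the smoothing of the dual diffusion acting on the means, not from a microscopic energy bound; any repair of your argument must first pass to expectations, and therefore must go through a determinism result such as Lemma \ref{lemma 5.4 variance} before the variational inequality is applied.
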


By Lemma \ref{lem:tight},
\[Q^* (\pi_t (du) = {\rho(t,u)}\,du \; \text{for all $0 \leq t \leq T$}) = 1.\]
By \cite[Section 5.3]{franco2019membrane}, to prove Lemma \ref{lem:energy}, we only need to prove the following result.

\begin{lemma}\label{lemma 5.3}
	For any $M>1$, there exists a constant $K=K(M)<+\infty$ such that
\[
E_{Q^*}\left[\sup_H\left(\int_0^{T}\int_{\mathbb{R}^d}(\partial_{u_1}H)(s,u)\rho(s,u)duds-\frac{1}{2}\int_0^T\int_{\mathbb{R}^d}H^2(s,u)duds\right)\right]\leq K,
\]
where the supremum is carried over all functions $H \in C^{0,1} ([0,T] \times \R^d)$ with compact support contained in $[0,T] \times ([-M, M]^d \backslash \{|u_1|\leq \frac{1}{M}\})$.
\end{lemma}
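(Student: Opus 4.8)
The plan is to follow the standard energy-estimate scheme (\emph{cf.}\;\cite[Chapter 7]{klscaling}, \cite[Section 5]{franco2019membrane}), but to feed it with the duality computations of Lemma \ref{lem:replacement} in place of the entropy-production bounds used for the exclusion process, which are unavailable here since the invariant measures are not explicit. Write
\[
\ell_H(\rho)=\int_0^T\!\!\int_{\R^d}(\partial_{u_1}H)(s,u)\,\rho(s,u)\,du\,ds-\tfrac12\int_0^T\!\!\int_{\R^d}H^2(s,u)\,du\,ds
\]
for the functional inside the supremum. Since the admissible test functions form a separable space, I fix a countable dense family $\{H_k\}_{k\ge1}$ and, by monotone convergence, reduce the assertion to a bound on $E_{Q^*}\big[\max_{1\le k\le n}\ell_{H_k}(\rho)\big]$ that is uniform in $n$. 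Each map $\pi\mapsto\max_{1\le k\le n}\ell_{H_k}(\pi)$ is bounded and continuous on the trajectories charged by $Q^*$ (which are absolutely continuous by Lemma \ref{lem:tight}); since $Q^N\Rightarrow Q^*$ along the convergent subsequence, I may pass to the prelimit, and it suffices to bound $\lim_{N\to\infty}E_{Q^N}\big[\max_{1\le k\le n}\ell_{H_k}(\pi^N)\big]$.

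At the prelimit level I split $\ell_{H_k}(\pi^N)$ into its mean and its fluctuation. For the mean, the identity $\E[\eta_s(x)]=\E[\rho_0(X^{N,x}_{sN^2,\beta}/N)]$ from \eqref{equ duality2} together with the invariance principle, Theorem \ref{theorem invariance}, yields $\E[\pi^N_s]\to\bar\rho(s,u)\,du$, where $\bar\rho(s,u)=\E[\rho_0(W^u_{s,\beta})]$ is the deterministic first-moment profile ($W_{s,\beta}$ as in the proof of Lemma \ref{lem:replacement}); hence $E_{Q^N}[\ell_{H_k}(\pi^N)]\to\ell_{H_k}(\bar\rho)$. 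Because every $H_k$ is supported in $[-M,M]^d\backslash\{|u_1|\le\frac1M\}$, the profile $\bar\rho$ is a smooth solution of the heat equation on the support and has locally bounded $u_1$-derivative there; integrating by parts in $u_1$ (no boundary term) and using the pointwise inequality $-ab-\tfrac12 b^2\le\tfrac12 a^2$ with $a=\partial_{u_1}\bar\rho,\ b=H_k$,
\[
\ell_{H_k}(\bar\rho)=\int_0^T\!\!\int_{\R^d}\big(-H_k\,\partial_{u_1}\bar\rho-\tfrac12 H_k^2\big)\,du\,ds\le\tfrac12\int_0^T\!\!\int_{\{\frac1M\le|u_1|\le M\}}(\partial_{u_1}\bar\rho)^2\,du\,ds=:K(M),
\]
which is finite and independent of $k$ and $n$. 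This step is not circular: $\bar\rho$ is defined directly through duality and does not use the identification of the density of $Q^*$.

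It remains to show that the fluctuations are negligible, which is the crux. The fluctuation $\ell_{H_k}(\pi^N)-E_{Q^N}[\ell_{H_k}(\pi^N)]=\int_0^T\langle\pi^N_s-\E\pi^N_s,\partial_{u_1}H_k\rangle\,ds$ has variance
\[
\int_0^T\!\!\int_0^T N^{-2d}\sum_{x,y\in\Z^d}{\rm Cov}\big(\eta_s(x),\eta_{s'}(y)\big)\,\partial_{u_1}H_k(\tfrac xN)\,\partial_{u_1}H_k(\tfrac yN)\,ds\,ds'.
\]
Exactly as in the Claim \eqref{equ 4.3.4 covariance vanish}, the two-point duality \eqref{equ duality1} bounds ${\rm Cov}(\eta_s(x),\eta_{s'}(y))$, uniformly in $s,s'$ and $N$, by the probability that the transverse components of the two dual walks meet, hence by the hitting probability $\Gamma(\cdot)$ of the symmetric walk on $\Z^{d-1}$. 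Since $d-1\ge3$ this walk is transient and $\sum_{y^\bot}\Gamma(\cdot)=O((MN)^2)$ over the support, while the free first coordinate contributes an extra factor $O(N)$; after the normalization $N^{-2d}$ this gives ${\rm Var}(\ell_{H_k}(\pi^N))=O(N^{3-d})$, which tends to $0$ precisely because $d\ge4$. For fixed $n$ we therefore get
\[
E_{Q^N}\Big[\max_{1\le k\le n}\big|\ell_{H_k}(\pi^N)-E_{Q^N}\ell_{H_k}(\pi^N)\big|\Big]\le\sum_{k=1}^n{\rm Var}\big(\ell_{H_k}(\pi^N)\big)^{1/2}\xrightarrow[N\to\infty]{}0.
\]
Combining the three displays yields $E_{Q^*}\big[\max_{1\le k\le n}\ell_{H_k}\big]\le K(M)$ uniformly in $n$, and monotone convergence finishes the proof. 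The main obstacle is this variance estimate: it is where the two-point duality and the transience of the transverse walk enter, and where the hypothesis $d\ge4$ becomes essential—in $d=3$ the crude transverse bound only gives an $O(1)$ variance, which is exactly the obstruction flagged in the introduction.
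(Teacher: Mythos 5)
Your overall architecture is sound and in fact runs parallel to the paper's: your fluctuation-vanishing step (variance $O(N^{3-d})$ via the two-point duality and transience of the transverse walk on $\Z^{d-1}$) is essentially the content of the paper's Lemma \ref{lemma 5.4 variance}, which shows that the density under $Q^*$ is $Q^*$-a.s.\ deterministic, and your reduction of the problem to the deterministic profile $\bar\rho(s,u)=\mathbb{E}\left[\rho_0\left(W^u_{s,\beta}\right)\right]$ matches the paper's passage from \eqref{equ 5.3} to \eqref{equ 5.4} via duality \eqref{equ duality2} and Theorem \ref{theorem invariance}.

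However, there is a genuine gap at the crux. You dispose of the mean part by asserting that $\bar\rho$ ``is a smooth solution of the heat equation on the support and has locally bounded $u_1$-derivative there,'' so that $K(M)=\tfrac12\int_0^T\int(\partial_{u_1}\bar\rho)^2\,du\,ds<\infty$. That assertion is precisely the paper's key estimate \eqref{equ 5.5}, which is the only hard part of the whole lemma, and it does not follow from soft arguments. Interior parabolic regularity gives only $|\partial_{u_1}\bar\rho(s,u)|\leq Cs^{-1/2}$ for bounded caloric functions, and $\int_0^T s^{-1}\,ds=+\infty$, so smoothness of $\bar\rho$ in the open region $\{u_1\neq 0\}$ does not make your $K(M)$ finite: you need a gradient bound that is uniform down to $s=0$, i.e.\ one exploiting the regularity of $\rho_0$ from Assumption (A). For the free heat equation this would be immediate, since the semigroup commutes with $\partial_{u_1}$ and one gets $\|\partial_{u_1}\bar\rho\|_\infty\leq\|\partial_{u_1}\rho_0\|_\infty$; but the membrane destroys translation invariance in $u_1$. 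For $\beta=1$ the first coordinate is a snapping-out Brownian motion, whose semigroup involves the local time factor $e^{-2\alpha L_{2s}(-u_1)}$, and varying $u_1$ produces hitting-time-density terms. Controlling these is exactly what the bulk of the paper's proof does: the strong Markov decomposition at $\tau_{-u_1}$, the explicit density $p(\theta,a)$ of the hitting time, and the bound $\int_0^r\left|\frac{d}{da}p(\theta,a)\right|d\theta\leq C_1(M)$, valid only because $|u_1|\geq \frac{1}{M}$. (The paper even avoids having to differentiate $\bar\rho$ at all, by bounding the difference quotient in \eqref{equ 5.5} directly and then applying Cauchy--Schwarz, whereas your integration by parts additionally presupposes genuine differentiability.) Everything else in your proposal --- the countable-dense-family reduction, the passage to the prelimit by weak convergence, the variance estimate and the place where $d\geq 4$ enters --- is correct, but without a proof of the uniform derivative (or difference-quotient) bound your argument is incomplete at its decisive step.
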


Based on the above lemma,  the proof of Lemma \ref{lem:energy} follows from the same analysis as that given in the proof of  \cite[Lemma 5.7]{franco2019membrane}, where a crucial step is the utilization of Riesz's representation theorem, the details of which we omit here.

To prove Lemma \ref{lemma 5.3}, we need the following lemma.

\begin{lemma}\label{lemma 5.4 variance}
\[
Q^*\left(\rho(t,u)=E_{Q^*}[\rho(t,u)]\text{~for all~}0\leq t\leq T\right)=1.
\]
\end{lemma}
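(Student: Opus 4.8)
The statement of Lemma \ref{lemma 5.4 variance} asserts that under the limit measure $Q^*$, the density $\rho(t,u)$ is deterministic, equal to its $Q^*$-average for all $t\in[0,T]$. The plan is to show that the variance of $\rho(t,u)$ vanishes, and the natural way to do this is to exploit the decay of the two-point correlation functions of the voter model established in the Claim \eqref{equ 4.3.4 covariance vanish} proved during the Replacement Lemma.

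First I would reduce the statement to a variance computation at the level of the empirical measure. Fix a test function $H\in C_c(\R^d)$ and $t\in[0,T]$; it suffices to prove that
\[
\vari\big(\langle \pi^N_t, H\rangle\big) = \vari\Big(\frac{1}{N^d}\sum_{x\in\Z^d}\eta_t(x) H(\tfrac{x}{N})\Big) \longrightarrow 0
\]
as $N\to\infty$, since then the limiting field $\langle\pi_t,H\rangle$ has zero variance under $Q^*$, forcing $\langle\pi_t,H\rangle = E_{Q^*}[\langle\pi_t,H\rangle]$ almost surely for each fixed $t$, and a continuity/countable-dense-set argument upgrades this to hold simultaneously for all $t\in[0,T]$ and all $H$. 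Expanding the variance gives
\[
\vari\big(\langle \pi^N_t, H\rangle\big) = \frac{1}{N^{2d}}\sum_{x,y\in\Z^d} H(\tfrac{x}{N}) H(\tfrac{y}{N})\, \mathrm{Cov}\big(\eta_t(x),\eta_t(y)\big).
\]

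The key step is to control this double sum using the correlation estimate. Writing $\eta_t=\tilde\eta_{tN^2}$ and applying the duality identity \eqref{equ duality1} as in the proof of the Claim, the covariance $\mathrm{Cov}(\eta_t(x),\eta_t(y))=\mathrm{Cov}(\tilde\eta_{tN^2}(x),\tilde\eta_{tN^2}(y))$ is nonnegative and bounded by the meeting probability $\P(\tau^{N,\beta,0}_{x,y}<+\infty)$ of the two dual walks started at $x$ and $y$. By the transience argument already carried out (the transverse difference walk is a copy of $V_{2t}$ on $\Z^{d-1}$ with $d-1\geq 3$), this meeting probability is bounded by $\Gamma((x-y)^\bot)$, which tends to zero as $|(x-y)^\bot|\to\infty$. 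Splitting the double sum into the diagonal-type region $\{|(x-y)^\bot|\leq M\}$ and its complement, the far region contributes at most $\delta\cdot\|H\|_\infty^2\,(\text{const})$ since $\Gamma\leq\delta$ there, while the near region contains at most $O(M^{d-1})$ choices of $y^\bot$ for each $x$ but still a full free coordinate $y_1$; here one uses the extra decay of $\Gamma$ together with $|H|$ being summable in $x$ against $N^{-d}$ to see that the near region carries a prefactor $N^{-d}\cdot N^d\cdot N^{-(d-1)}\cdot M^{d-1}\to 0$. Thus both regions vanish in the iterated limit $N\to\infty$ then $\delta\to 0$.

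The main obstacle I anticipate is the bookkeeping of the two scales in the double sum: the covariance decays only in the transverse coordinates $(x-y)^\bot$, so the summation over the longitudinal difference $x_1-y_1$ must be absorbed entirely by the decay of $H$, and one must verify that the normalization $N^{-2d}$ together with the number of contributing lattice sites genuinely produces a vanishing bound rather than an $O(1)$ term. A clean way around this is to sum first over $y$ for fixed $x$, using $\sum_y \Gamma((x-y)^\bot) < \infty$ in the transverse directions combined with $H$ having compact support to bound $\sum_{y_1}|H(\tfrac{y}{N})| \leq C N$, which yields
\[
\vari\big(\langle\pi^N_t,H\rangle\big) \leq \frac{C}{N^{2d}}\cdot N^d \cdot N \cdot N^{d-1} \cdot \sup_{|\omega|>M}\Gamma(\omega) + (\text{diagonal correction}),
\]
so that the full estimate collapses to the already-established decay $\lim_{|\omega|\to\infty}\Gamma(\omega)=0$. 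Once the variance is shown to vanish for each fixed $t$ and $H$, passing to a countable dense family of test functions and using the path continuity guaranteed by Lemma \ref{lem:tight} gives the simultaneous-in-$t$ statement, completing the proof.
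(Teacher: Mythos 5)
Your proposal is correct and takes essentially the same route as the paper's proof: reduce to showing $\mathrm{Var}(\pi^N_t(H))\to 0$ for fixed $t$ and $H\in C_c(\R^d)$ (the paper makes the transfer to $Q^*$ precise via dominated convergence and Fatou's lemma), bound $\mathrm{Cov}(\eta_t(x),\eta_t(y))$ by the meeting probability of the dual coalescing walks and hence by $\Gamma((x-y)^\bot)$, and split the double sum into $\{|(x-y)^\bot|\le M\}$ and its complement, yielding the same bound $C\delta + C M^{d-1}N^{1-d}$ as in the paper's proof of Eq.~\eqref{equ 5.2}. One caveat: in your ``clean way'' aside, the claim $\sum_{y}\Gamma((x-y)^\bot)<\infty$ over the transverse directions is false, since $\Gamma(\omega)\asymp|\omega|^{3-d}$ and the shell at radius $r$ in $\Z^{d-1}$ contributes $\asymp r^{d-2}\cdot r^{3-d}=r$, so the series diverges; fortunately your primary near/far splitting never uses this summability---only $\Gamma\le 1$ on the near region and $\sup_{|\omega|>M}\Gamma(\omega)\le\delta$ on the far one---so the proof stands as written provided that aside is discarded.
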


\proof[Proof of Lemma \ref{lemma 5.4 variance}]
Since $Q^*$ is concentrated on c\`{a}dl\`{a}g paths, we only need to show that
\begin{equation}\label{equ 5.1}
Q^*\left(\pi_t(H)=E_{Q^*}[\pi_t(H)] \right)=1
\end{equation}
for every $0<t\leq T$ and $H\in C_c(\mathbb{R}^d)$. To prove Eq. \eqref{equ 5.1}, we claim that
\begin{equation}\label{equ 5.2}
\lim_{N\rightarrow+\infty}{\rm Var}\left(\pi_t^N(H)\right)=0
\end{equation}
for every $0< t \leq T$ and $H\in C_c(\mathbb{R}^d)$.

We first use Eq. \eqref{equ 5.2} to prove Eq. \eqref{equ 5.1} and then check Eq. \eqref{equ 5.2}. Note that $\pi_\cdot$ is the weak limit of a subsequence of $\{\pi^N_\cdot\}_{N\geq 1}$ but we still write this subsequence as $\{\pi^N\}_{N\geq 1}$  for simplicity. Since $\eta_t(x)\leq 1$,
\[
|\pi_t^N(H)|\leq C\|H\|_\infty
\]
for every $N\geq 1$, where $\|H\|_{\infty}=\sup_{u\in \mathbb{Z}^d}|H(u)|$ and $C<+\infty$ is a constant only depending on the compact support of $H$. As a result, by dominated convergence theorem,
\begin{equation}\label{equ 5.2.5}
\lim_{N\rightarrow+\infty}\mathbb{E}[\pi_t^N(H)]=E_{Q^*}[\pi_t(H)].
\end{equation}
Then, Fatou's lemma and Eq. \eqref{equ 5.2} imply that
\begin{multline*}
{\rm Var}_{Q^*}\left(\pi_t(H)\right) \leq \liminf_{N\rightarrow+\infty}\mathbb{E}\left[\left(\pi_t^N(H)\right)^2\right]-\left(E_{Q^*}[\pi_t(H)]\right)^2\\
=\lim_{N\rightarrow+\infty}\left(\mathbb{E}\left[\left(\pi_t^N(H)\right)^2\right]-\left(\mathbb{E}[\pi_t^N(H)]\right)^2\right)
=\lim_{N\rightarrow+\infty}{\rm Var}(\pi_t^N(H))=0
\end{multline*}
and hence Eq. \eqref{equ 5.1} holds.

Now we check Eq. \eqref{equ 5.2}. According to an analysis similar with that leading to Eq. \eqref{equ 4.3.4 covariance vanish},
\begin{multline*}
{\rm Var}\left(\pi_t^N(H)\right) \leq \frac{1}{N^{2d}}\sum_{x\in \mathbb{Z}^d}\sum_{y\in \mathbb{Z}^d}\left|H\left(\frac{x}{N}\right)\right|\left|H\left(\frac{y}{N}\right)\right|{\rm Cov}(\eta_t(x), \eta_t(y))\\
\leq \frac{1}{N^{2d}}\sum_{x\in \mathbb{Z}^d}\sum_{y\in \mathbb{Z}^d}\left|H\left(\frac{x}{N}\right)\right|\left|H\left(\frac{y}{N}\right)\right|
P\left(\tau_{x,y}^{N,\beta, 0}<+\infty\right)\\
\leq \frac{1}{N^{2d}}\sum_{x\in \mathbb{Z}^d}\sum_{y\in \mathbb{Z}^d}\left|H\left(\frac{x}{N}\right)\right|\left|H\left(\frac{y}{N}\right)\right|
\Gamma\left((y-x)^\bot\right).
\end{multline*}
Since $d-1\geq 3$, for any $\epsilon>0$, there exists $M=M(\epsilon)<+\infty$ that $\Gamma(u)<\epsilon$ when $|u|>M$. Since $H$ is with compact support, there exists $C=C(H)\in [1, +\infty)$ that $H(u)=0$ when $|u_1|>C$. As a result, for sufficiently large $N$, the last formula is bounded by
\begin{multline*}
\frac{\epsilon}{N^{2d}}\sum_{x\in \mathbb{Z}^d}\sum_{y:|(y-x)^\bot|>M}\left|H\left(\frac{x}{N}\right)\right|\left|H\left(\frac{y}{N}\right)\right|
+\frac{1}{N^{2d}}\sum_{x\in \mathbb{Z}^d}\sum_{y:|(y-x)^\bot|\leq M, \|y_1\|\leq CN}\left|H\left(\frac{x}{N}\right)\right|\left|H\left(\frac{y}{N}\right)\right|\\
\leq C_H \epsilon + C_H(2M+1)^{d-1} N^{1-d}.
\end{multline*}
This is enough to prove Eq. \eqref{equ 5.2}.
\qed

At last, we prove Lemma \ref{lemma 5.3}.

\proof[Proof of Lemma \ref{lemma 5.3}]

By Lemma \ref{lemma 5.4 variance}, we only need to show that
\begin{equation}\label{equ 5.3}
\sup_H\left(\int_0^{T}\int_{\mathbb{R}^d}(\partial_{u_1}H)(s,u)E_{Q^*}[\rho(s,u)] duds-\frac{1}{2}\int_0^T\int_{\mathbb{R}^d}H^2(s,u)duds\right)<+\infty
\end{equation}
for each $M>0$, where the supremum is carried over all functions $H \in C^{0,1} ([0,T] \times \R^d)$ with compact support contained in $[0,T] \times ([-M, M]^d \backslash \{|u_1|\leq \frac{1}{M}\})$.

Since $Q^*$ is any weak limit of the sequence $\{Q^N\}_{N \geq 1}$ along some subsequence, which we still denote by $\{Q^N\}_{N \geq 1}$ for simplicity,
\begin{multline*}
\int_0^{T}\int_{\mathbb{R}^d}(\partial_{u_1}H)(s,u) E_{Q^*}[\rho(s,u)]duds
=\lim_{\delta\rightarrow 0}\lim_{N\rightarrow+\infty}\int_0^{T}
\frac{1}{N^d}\sum_{x\in \mathbb{Z}^d}\frac{H\left(s,\frac{x}{N}+\delta e_1\right)-H\left(s,\tfrac{x}{N}\right)}{\delta}\mathbb{E}[\eta_s(x)]ds \\
=\lim_{\delta\rightarrow 0}\lim_{N\rightarrow+\infty}\int_0^{T}
\frac{1}{N^d}\sum_{x\in \mathbb{Z}^d}H\left(s, \tfrac{x}{N}\right)\frac{\mathbb{E}[\eta_s(x-N\delta e_1)]-\mathbb{E}[\eta_s(x)]}{\delta}ds,
\end{multline*}
where $e_1=(1,0,\ldots,0)$. Above, the first identify follows from Eq. \eqref{equ 5.2.5} and the second from summation by parts.
By Eq. \eqref{equ duality2},
\[\mathbb{E}[\eta_s(x)]=\mathbb{E}\left[\rho_0\left(\tfrac{X_{sN^2,\beta}^{N,x}}{N}\right) \right].\]
By Theorem \ref{theorem invariance},
\begin{equation}\label{equ 5.4}
\int_0^{T}\int_{\mathbb{R}^d}(\partial_{u_1}H)(s,u)E_{Q^*}[\rho(s,u)]duds=\lim_{\delta\rightarrow 0}\int_0^{T}\int_{\mathbb{R}^d}H(s,u)\frac{\mathbb{E}[\rho_0(W^{u-\delta e_1}_{s,\beta})]-\mathbb{E}[\rho_0(W^u_{s,\beta})]}{\delta}duds,
\end{equation}
where $W_{s,\beta}$ is defined as in the proof of Lemma \ref{lem:replacement}.

 For all $\beta\geq 1$, we claim that there exists $C=C(\beta, \rho_0, M)<+\infty$ that
\begin{equation}\label{equ 5.5}
\left|\frac{\mathbb{E}[\rho_0(W^{u-\delta e_1}_{s,\beta})]-\mathbb{E}[\rho_0(W^u_{s,\beta})]}{\delta}\right|\leq C
\end{equation}
for every $(u,s,\delta)$ satisfying $|u_1|\in \left[\frac{1}{M}, M\right], 0\leq s\leq T$ and $\delta<\frac{1}{2M}$. This is enough to prove Eq. \eqref{equ 5.3} since by Cauchy-Schwarz inequality, we may bound the right-hand side of \eqref{equ 5.4} by
\[\frac{1}{2}\int_0^T\int_{\mathbb{R}^d}H^2(s,u)duds + \frac{C^2}{2}T(2M)^d.\]

It remains to prove Eq. \eqref{equ 5.5}. Without loss of generality, we assume that $u_1\in \left[\frac{1}{M}, M\right]$ since the other part can be checked in the same way. If $\beta>1$, since $u_1,u_1-\delta>0$ when $\delta<\frac{1}{2M}$,
\[
W_{s,\beta}^u=\left(|B_{2s}^{u_1}(1)|, B_{2s}^{u_2}(2),\ldots, B_{2s}^{u_d}(d)\right),\quad W_{s,\beta}^{u-\delta e_1}=\left(|B_{2s}^{u_1-\delta}(1)|, B_{2s}^{u_2}(2),\ldots, B_{2s}^{u_d}(d)\right)
\]
according to the definition of $W_{s,\beta}$ given in Section \ref{sec:replacement}, where $\{B_t(i):t\geq 0\}_{1\leq i\leq d}$ are independent copies of standard Brownian motions and $B_t^a(i)$ means $B_0(i)=a$ for $1\leq i\leq d$. By translation invariance,
\[B_{2s}^{u_1}(1)=_d B_{2s}^0(1)+u_1, \quad B_{2s}^{u_1-\delta}(1)=_d B_{2s}^0(1)+u_1-\delta.\]
By Lagrange's mean value theorem,
\[
\left|\frac{\mathbb{E}[\rho_0(W^{u-\delta e_1}_{s,\beta})]-\mathbb{E}[\rho_0(W^u_{s,\beta})]}{\delta}\right|
\leq \|\partial_{u_1}\rho_0\|_\infty
\]
and hence Eq.\,\eqref{equ 5.5} holds for $\beta>1$.

When $\beta=1$,
\[
W_{s,\beta}^u=\left(B_{2s, 1}^{u_1}, B_{2s}^{u_2}(2), \ldots, B_{2s}^{u_d}(d)\right),\quad W_{s,\beta}^{u-\delta e_1}=\left(B_{2s, 1}^{u_1-\delta}, B_{2s}^{u_2}(2), \ldots, B_{2s}^{u_d}(d)\right),
\]
where $\{B_{t,1}\}_{t\geq 0}$ is the snapping out Browning motion with parameter $2\alpha$. For any $u\in \mathbb{R}^d, v\in \mathbb{R}^{d-1}$, let $f_{u^\bot}(t, v)$ be the density function of $\left(B_t^{u_2}(2), B_t^{u_3}(3),\ldots, B_t^{u_d}(d)\right)$ at $v$, then
\begin{equation}\label{equ 5.6}
\frac{\mathbb{E}[\rho_0(W^{u-\delta e_1}_{s,\beta})]-\mathbb{E}[\rho_0(W^u_{s,\beta})]}{\delta}
=\int_{\mathbb{R}^{d-1}}\frac{\mathbb{E}\left[\rho_0\left(B_{2s, 1}^{u_1-\delta}, v\right)\right]-\mathbb{E}\left[\rho_0\left(B_{2s, 1}^{u_1}, v\right)\right]}{\delta}f_{u^{\bot}}(2s, v)dv.
\end{equation}
As we have recalled in Section \ref{sec:replacement},
\begin{multline*}
\mathbb{E}[\rho_0(B_{2s,1}^{u_1}, v)]=\mathbb{E}\left[\left(\frac{1+e^{-2\alpha L_{2s}}}{2}\right)\rho_0\left(|B_{2s}^{u_1}|, v\right)
+\left(\frac{1-e^{-2\alpha L_{2s}}}{2}\right)\rho_0\left(-|B_{2s}^{u_1}|, v\right)\right]\\
=\mathbb{E}\left[\left(\frac{1+e^{-2\alpha L_{2s}(-u_1)}}{2}\right)\rho_0\left(|B_{2s}^{0}+u_1|, v\right)
+\left(\frac{1-e^{-2\alpha L_{2s}(-u_1)}}{2}\right)\rho_0\left(-|B_{2s}^0+u_1|, v\right)\right]
\end{multline*}
where $L_t$ is the local time of $\{B_t\}_{t\geq 0}$ at point $0$ and $L_t(a)$ is the local time of $\{B_t\}_{t\geq 0}$ at point $a$.  Note that in the above equations we utilize the fact that $B_t^a$ hits $0$ if and only if $B_t^0$ hits $-a$ when we write $B_t^a$ as $B_t^0+a$ in the sense of coupling. Similarly, we may write $\mathbb{E}[\rho_0(B_{2s,1}^{u_1-\delta}, v)]$ as
\[\mathbb{E}\Bigg[\left(\frac{1+e^{-2\alpha L_{2s}(-u_1+\delta)}}{2}\right)\rho_0\left(|B_{2s}^0+u_1-\delta|, v\right)
+\left(\frac{1-e^{-2\alpha L_{2s}(-u_1+\delta)}}{2}\right)\rho_0\left(-|B_{2s}^0+u_1-\delta|, v\right)\Bigg].\]
Hence,
\[
\mathbb{E}\left[\rho_0\left(B_{2s, 1}^{u_1-\delta}, v\right)\right]-\mathbb{E}\left[\rho_0\left(B_{2s, 1}^{u_1}, v\right)\right]
={\rm \uppercase\expandafter{\romannumeral1}}+{\rm \uppercase\expandafter{\romannumeral2}},
\]
where
\[
{\rm \uppercase\expandafter{\romannumeral1}}=
-\mathbb{E}\left[\left(\frac{1+e^{-2\alpha L_{2s}(-u_1)}}{2}\right)\rho_0\left(|B_{2s}^{0}+u_1|, v\right)\right]
+\mathbb{E}\left[\left(\frac{1+e^{-2\alpha L_{2s}(-u_1+\delta)}}{2}\right)\rho_0\left(|B_{2s}^{0}+u_1-\delta|, v\right)\right]
\]
and
\[{\rm \uppercase\expandafter{\romannumeral2}}=-\mathbb{E}\left[\left(\frac{1-e^{-2\alpha L_{2s}(-u_1)}}{2}\right)\rho_0\left(-|B_{2s}^0+u_1|, v\right)\right]
+\mathbb{E}\left[\left(\frac{1-e^{-2\alpha L_{2s}(-u_1+\delta)}}{2}\right)\rho_0\left(-|B_{2s}^0+u_1-\delta|, v\right)\right].\]
We only calculate ${\rm \uppercase\expandafter{\romannumeral1}}$ since ${\rm \uppercase\expandafter{\romannumeral2}}$ can be calculated in the same way. For any $a\neq 0$, we define $\tau_a=\inf\{t:~B_t^0=a\}.$ Let $p(t,a)$ be the density function of $\tau_a$, then by strong Markov property,
\begin{multline*}
\mathbb{E}\left[\left(\frac{1+e^{-2\alpha L_{2s}(-u_1)}}{2}\right)\rho_0\left(|B_{2s}^{0}+u_1|, v\right)\right]
=\mathbb{E}\left[\left(\frac{1+e^{-2\alpha L_{2s}(-u_1)}}{2}\right)\rho_0\left(|B_{2s}^{0}+u_1|, v\right)\mathbf{1}_{\{\tau_{-u_1}\leq 2s\}}\right]\\
+\mathbb{E}\left[\left(\frac{1+e^{-2\alpha L_{2s}(-u_1)}}{2}\right)\rho_0\left(|B_{2s}^{0}+u_1|, v\right)\mathbf{1}_{\{\tau_{-u_1}>2s\}}\right]\\
=\int_0^{2s}p(\theta, -u_1)\mathbb{E}\left[\left(\frac{1+e^{-2\alpha L_{2s-\theta}}}{2}\right)\rho_0\left(|B_{2s-\theta}^0|, v\right)\right]d\theta
+\mathbb{E}\left[\rho_0\left(|B_{2s}^{0}+u_1|, v\right)\mathbf{1}_{\{\tau_{-u_1}>2s\}}\right],
\end{multline*}
where we utilize the facts that $L_t(a)=0$ when $t<\tau_a$ and that $(L_t(-a), B_t^{-a}+a)$ and $(L_t, B_t^0)$ have the same distribution. As a result,
\[
{\rm \uppercase\expandafter{\romannumeral1}}
={\rm \uppercase\expandafter{\romannumeral3}}+{\rm \uppercase\expandafter{\romannumeral4}},
\]
where
\[
{\rm \uppercase\expandafter{\romannumeral3}}=\int_0^{2s}\left(p(\theta,-u_1+\delta)-p(\theta, -u_1)\right)\mathbb{E}\left[\left(\frac{1+e^{-2\alpha L_{2s-\theta}}}{2}\right)\rho_0\left(|B_{2s-\theta}^0|, v\right)\right]d\theta
\]
and
\[
{\rm \uppercase\expandafter{\romannumeral4}}
=\mathbb{E}\left[\rho_0\left(|B_{2s}^{0}+u_1-\delta|, v\right) \mathbf{1}_{\{\tau_{-u_1+\delta}>2s\}}\right]-\mathbb{E}\left[\rho_0\left(|B_{2s}^{0}+u_1|, v\right)\mathbf{1}_{\{\tau_{-u_1}>2s\}}\right].
\]
It is shown in  \cite[Chapter 8]{Durrett} that
\[p(\theta, a)=\frac{a}{\sqrt{2\pi}}e^{-\frac{a^2}{2\theta}}\theta^{-\frac{3}{2}}.\]
 As a result, there exists $C_1=C_1(M)<+\infty$ independent of $0\leq s\leq T$ that
\[
\int_0^r \left|\frac{d}{da}p(\theta, a)\right|d\theta<C_1
\]
for any $0\leq r\leq 2T$ and $\frac{1}{2M}\leq a\leq M$. Then, by Lagrange's mean value theorem, for $\delta<\frac{1}{2M}$,
\begin{equation*}
\frac{\left|{\rm \uppercase\expandafter{\romannumeral3}}\right|}{\delta} \leq C_1\|\rho_0\|_{\infty}
\end{equation*}
 Now we deal with ${\rm \uppercase\expandafter{\romannumeral4}}$. Since $\tau_{-u_1+\delta}<\tau_{-u_1}$ for $\{B_t^0\}_{t\geq 0}$,
\[
{\rm \uppercase\expandafter{\romannumeral4}}={\rm \uppercase\expandafter{\romannumeral5}}+{\rm \uppercase\expandafter{\romannumeral6}},
\]
where
\[
{\rm \uppercase\expandafter{\romannumeral5}}=-\mathbb{E}\left[\rho_0\left(|B_{2s}^{0}+u_1-\delta|, v\right)\mathbf{1}_{\{\tau_{-u_1}>2s>\tau_{-u_1+\delta}\}}\right]
\]
and
\[
{\rm \uppercase\expandafter{\romannumeral6}}=\mathbb{E}\left[\left(\rho_0\left(|B_{2s}^{0}+u_1-\delta|, v\right)-\rho_0\left(|B_{2s}^{0}+u_1|, v\right)\right)\mathbf{1}_{\{\tau_{-u_1+\delta}>2s\}}\right].
\]
According to the expression of ${\rm \uppercase\expandafter{\romannumeral5}}$ and the definition of $p(t,a)$,
\[
|{\rm \uppercase\expandafter{\romannumeral5}}|\leq \|\rho_0\|_\infty\int_0^{2s}\left|p(\theta, -u_1+\delta)-p(\theta, -u_1)\right|d\theta
\]
and hence $\frac{|{\rm \uppercase\expandafter{\romannumeral5}}|}{\delta}\leq C_1 \|\rho_0\|_\infty $ when $\delta<\frac{1}{2M}$ by Lagrange's mean value theorem.
Since $\left||x|-|y|\right|\leq |x-y|$, by Lagrange's mean value theorem,
\[
\frac{|{\rm \uppercase\expandafter{\romannumeral6}}|}{\delta}\leq \|\partial_{u_1}\rho_0\|_\infty.
\]
As a result,
\begin{equation}\label{equ 5.7}
\frac{|{\rm \uppercase\expandafter{\romannumeral1}}|}{\delta}\leq \frac{|{\rm \uppercase\expandafter{\romannumeral3}}|+|{\rm \uppercase\expandafter{\romannumeral4}}|}{\delta}\leq2C_1 \|\rho_0\|_\infty +\|\partial_{u_1}\rho_0\|_\infty
\end{equation}
when $\delta<\frac{1}{2M}$. Note that Eq.\;\eqref{equ 5.7} still holds when $|{\rm \uppercase\expandafter{\romannumeral1}}|$ is replaced by $|{\rm \uppercase\expandafter{\romannumeral2}}|$ according to the same analysis.  Therefore,
\[
\left|\frac{\mathbb{E}\Big[\rho_0\left(B_{2s, 1}^{u_1-\delta}, v\right)\Big]-\mathbb{E}\Big[\rho_0\left(B_{2s, 1}^{u_1}, v\right) \Big] }{\delta}\right|\leq 2 \big( 2C_1 \|\rho_0\|_\infty +\|\partial_{u_1}\rho_0\|_\infty\big)
\]
when $\delta<\frac{1}{2M}$ and hence Eq.\;\eqref{equ 5.5} follows from Eq.\;\eqref{equ 5.6} when $\beta=1$. This completes the proof.
\qed

\section{Nonequilibrium fluctuations}\label{sec:fluc}

\subsection{Proof outline.}\label{subsec:flucoutline}

By Dynkin's martingale formula,
\begin{align}
	\mc{M}^N_t (H) &= \mc{Y}^N_t (H) - \mc{Y}^N_0 (H) - \int_0^t (N^2\gen +\partial_s) \mc{Y}^N_s (H) ds,\label{fluc-m}\\
	\mc{N}^N_t (H) &= \big[\mc{M}^N_t (H) \big]^2 - \int_0^t \Big\{ N^2 \gen \big[\mc{Y}^N_s (H)\big]^2 - 2 \mc{Y}^N_s (H) N^2 \gen \mc{Y}^N_s (H) \Big\}ds\label{fluc-n}
\end{align}
are both martingales. Similar to \eqref{int1}, \eqref{int2} and \eqref{int3}, the integrand in the martingale $\mc{M}^N_t (H) $ equals
\begin{align}
&(N^2 \gen +\partial_s)\mc{Y}^N_s (H)\notag\\
&= \frac{1}{N^{1+d/2}} \sum_{x \in \Z^d} \sum_{i=2}^d \bar{\eta}_s (x) \partial_{u_i}^2 H (\tfrac{x}{N}) + \frac{1}{N^{1+d/2}} \sum_{x_1 \neq 0,1} \bar{\eta}_s (x) \partial_{u_1}^2 H (\tfrac{x}{N})+o_{N,p} (1)\label{fluc-1}\\
&+  \frac{N}{N^{d/2}} \sum_{x_1 = 1} \bar{\eta}_s (x) \big[H(\tfrac{x+e_1}{N}) - H(\tfrac{x}{N}) \big] +  \frac{N}{N^{d/2}} \sum_{x_1 = 0} \bar{\eta}_s (x)  \big[H(\tfrac{x-e_1}{N}) - H(\tfrac{x}{N}) \big] \label{fluc-2}\\
&+ \frac{\alpha N^{1-\beta}}{N^{d/2}} \sum_{x_1 = 0} \big[\bar{\eta}_s (x) - \bar{\eta}_s (x+e_1)\big] \big[H(\tfrac{x+e_1}{N}) - H(\tfrac{x}{N})\big].\label{fluc-3}
\end{align}
Direct calculations show that the integrand in the expression of the martingale $\mc{N}^N_t (H) $ equals
\[	N^2 \gen \big[\mc{Y}^N_s (H)\big]^2 - 2 \mc{Y}^N_s (H) N^2 \gen \mc{Y}^N_s (H) = \frac{1}{N^d} \sum_{x,y \in \Z^d,\atop |x-y| = 1} \xi^N_{x,y} (\eta_s(x)-\eta_s(y))^2 H^2 \big(\tfrac{x}{N}\big).
\]

In Subsection \ref{subsec:tight_fluc}, we prove the tightness of the sequences $\{\mc{M}^N_t, 0 \leq t \leq T\}_{N \geq 1}$ and $\{\mc{Y}^N_t, 0 \leq t \leq T\}_{N \geq 1}$.   Denote by $\mathcal{M}_\cdot$ and $\mathcal{Y}_\cdot$ any limit points of $\mc{M}^N_\cdot$ and $\mc{Y}^N_\cdot$ along some subsequence respectively.  For convenience, we still denote this subsequence by $\{N\}$. Next, we characterize the limit point $\mathcal{M}_\cdot$ in different regimes of $\beta$.

\subsubsection{The case $\beta=1$}  Recall in this case, the function $H$ satisfies \[\partial_{u_1}^{+,2k+1} H (u) = \partial_{u_1}^{-,2k+1} H (u) = \alpha [ \partial_{u_1}^{+,2k} H (u) - \partial_{u_1}^{-,2k}H (u) ],\quad  \forall u_1 = 0, \;\forall k \geq 0.\]
We claim that the time integral of the sum of \eqref{fluc-2} and \eqref{fluc-3} converges to zero in $L^2$.  Indeed, by Taylor's expansion,
\begin{multline*}
	\frac{N}{N^{d/2}} \sum_{x_1 = 1} \bar{\eta}_s (x) \big[H(\tfrac{x+e_1}{N}) - H(\tfrac{x}{N}) \big] - \frac{\alpha}{N^{d/2}} \sum_{x_1 = 0}  \bar{\eta}_s (x+e_1)\big[H(\tfrac{x+e_1}{N}) - H(\tfrac{x}{N})\big]\\
	= 	\frac{N}{N^{d/2}} \sum_{x_1 = 1} \bar{\eta}_s (x) \big[H(\tfrac{x+e_1}{N}) - H(\tfrac{x}{N}) \big] - \frac{\alpha}{N^{d/2}} \sum_{x_1 = 1}  \bar{\eta}_s (x)\big[H(\tfrac{x}{N}) - H(\tfrac{x-e_1}{N})\big]\\
	= \frac{1}{N^{d/2}} \sum_{x_1 = 1} \bar{\eta}_s (x) \Big[ \partial^+_{u_1} H(\tfrac{x-e_1}{N}) - \alpha \big(H^+(\tfrac{x-e_1}{N})  -  H^- (\tfrac{x-e_1}{N}) \big) \Big]
	+ \frac{1}{2 N^{d/2+1}}  \sum_{x_1 = 1} \bar{\eta}_s (x) R_{1,H} (x),
\end{multline*}
where $R_{1,H} (x) = [4 \partial_{u_1}^2 H (\tfrac{\theta(x-e_1,x+e_1)}{N}) - \partial_{u_1}^2 H (\tfrac{\theta(x-e_1,x)}{N})] - \alpha N  [ H(\tfrac{x}{N}) - H^+ (\tfrac{x-e_1}{N})]$ for $x_1 = 0$. In the expression of $R_{1,H}$, we use $\theta (x,y)$ to denote some point between $x$ and $y$. Note that $R_{1,H}$ is of order one. With the above choices of $H$, the first term  in the last equation is zero,  and by \eqref{fluc-var}, the last term converges to zero in $L^2$. This proves the claim. Moreover, since $\eta(x)$ is bounded, by \eqref{fluc-n},
\[\sup_{N \geq 1} \E \Big[ \sup_{0 \leq t \leq T} \big( \mc{M}^N_t \big)^2\Big] \leq C\]
for some constant $C$. Thus, the martingale $\mc{M}^N_t$ is uniformly integrable. This implies the limit $\mc{M}_t$ is a martingale.  Moreover, passing to the limit, we have
\begin{equation}\label{mt}
\mc{M}_t (H) = \mc{Y}_t (H) - \mc{Y}_0 (H) - \int_0^t \mc{Y}_s ( \Delta H) ds.
\end{equation}
For the martingale $\mc{N}^N_t (H)$, in Theorem \ref{theorem 5.2.1}, we  prove that
\begin{multline*}
	\lim_{N \rightarrow \infty} \int_0^t \frac{1}{N^d} \sum_{x,y \in \Z^d,\atop |x-y| = 1} \xi^N_{x,y} (\eta_s(x)-\eta_s(y))^2 H^2 \big(\tfrac{x}{N}\big) ds \\
	= 4d (1-\gamma_d) \int_0^t \int_{\R^d} \rho(s,u) (1-\rho(s,u)) H^2 (u) du ds
\end{multline*}
in $L^2$.  Since $\mc{M}^N_t (H)$ converges to $\mathcal{M}_t (H)$ in distribution, $\mc{N}_t^N (H)$ also converges in distribution, and denote the limit by $\mc{N}_t$. Passing to the limit,
\begin{equation}\label{nt}
\mc{N}_t (H) = \big[\mc{M}_t (H) \big]^2 - 4d (1-\gamma_d) \int_0^t \int_{\R^d} \rho(s,u) (1-\rho(s,u)) H^2 (u) du ds.
\end{equation}
Similarly, one could also prove that $\mc{N}_t^N (H)$ is uniformly integrable, implying $\mc{N}_t (H)$ is a martingale. This concludes the proof for the case $\beta=1$.

\subsubsection{The case $\beta > 3/2$} Recall in this case we take $H$ to be such that
\[\partial_{u_1}^{+,2k+1} H (t,u) = \partial_{u_1}^{-,2k+1} H (t,u) = 0, \quad \forall u_1 = 0,\; \forall k \geq 0.\]
Then, by Taylor's expansion, we could write \eqref{fluc-2} as
\[ \frac{{1}}{2 N^{1+d/2}} \sum_{x_1 = 1} \bar{\eta}_s (x) R_{2,H} (x) +  \frac{1}{2N^{1+d/2}} \sum_{x_1 = 0} \bar{\eta}_s (x) R_{3,H} (x), \]
where $R_{2,H} (x) = [4 \partial_{u_1}^2 H (\tfrac{\theta(x-e_1,x+e_1)}{N}) - \partial_{u_1}^2  H (\tfrac{\theta(x-e_1,x)}{N})]$ for $x_1 = 1$, and $R_{3,H} (x) = \partial_{u_1}^2 H (\tfrac{\theta(x-e_1,x)}{N})$ for $x_1 = 0$.  By \eqref{fluc-var}, the term \eqref{fluc-2} vanishes in the limit. Using \eqref{fluc-var} again, the variance of the time integral of the term  \eqref{fluc-3} is bounded by $C N^{3-2\beta}$. Thus, the term \eqref{fluc-3} also vanishes in the limit since $\beta > 3/2$. Thus, passing to the limit, $\mc{M}_t (H)$ and $\mc{N}_t (H)$ are still given by \eqref{mt} and \eqref{nt} respectively. Similar to the case $\beta = 1$, one could also check directly that $\mc{M}_t^N (H)$ and $\mc{N}_t^N (H)$ are uniformly integrable, implying $\mc{M}_t (H)$ and $\mc{N}_t (H)$  are martingales. This concludes the proof for the case $\beta > 3/2$.

\begin{remark}\label{rmk5.1}
For the case $0 < \beta < 1$, one needs to prove that for any $H \in \mc{S} (\R^d)$,
\[\lim_{N \rightarrow \infty} \frac{1}{N^{d/2}} \sum_{x_1 = 0} [\bar{\eta}_s (x+e_1) - \bar{\eta}_s (x)] H\big(\tfrac{x}{N}\big) = 0  \]
in probability, which is beyond the technique in this article. We leave it as a further investigation.
\end{remark}

\subsection{Limit theorems.} The main aim of this subsection is to prove Theorems \ref{theorem 5.2.0} and \ref{theorem 5.2.1}.

\begin{theorem}\label{theorem 5.2.0}
Under {\bf Assumption (A)},
\begin{equation}\label{fluc-var}
	{\rm Var} \Big( \int_0^t \sum_{x_1=0} \bar{\eta}_s (x) H\big(\tfrac{x}{N}\big)ds\Big)  \leq C N^{d+1}.
\end{equation}
\end{theorem}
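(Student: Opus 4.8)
The plan is to expand the variance using bilinearity of covariance and reduce it, via the duality relation, to controlling the coalescence times of the dual random walks. We write
\[
{\rm Var} \Big( \int_0^t \sum_{x_1=0} \bar{\eta}_s (x) H\big(\tfrac{x}{N}\big)\,ds\Big) = \int_0^t \int_0^t \sum_{x_1 = 0} \sum_{y_1 = 0} H\big(\tfrac{x}{N}\big) H\big(\tfrac{y}{N}\big) {\rm Cov}\big(\eta_{s}(x), \eta_{r}(y)\big)\,ds\,dr,
\]
so the problem becomes a bound on the double sum of covariances over the hyperplane $\{x_1 = 0\}$. Since $\eta_t = \tilde\eta_{tN^2}$, the covariances ${\rm Cov}(\eta_s(x), \eta_r(y))$ are exactly those already analyzed in the proof of Lemma \ref{lem:replacement}.

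**Key steps.** First, I would invoke the estimate obtained there: by the duality formula \eqref{equ duality1} and positive correlation under Assumption (A), for $s \geq r$ one has
\[
0 \leq {\rm Cov}\big(\eta_s(x), \eta_r(y)\big) \leq \bb{P}\big(\tau^{N,\beta,\,(s-r)N^2}_{x,y} < +\infty\big) \leq \Gamma\big((x-y)^\bot\big),
\]
where the last inequality follows because the transverse difference of the two dual walks is a copy of the simple random walk $V$ on $\Z^{d-1}$, and $\Gamma$ is its return probability. This bound is uniform in $N$, in the times, and depends only on the transverse separation $(x-y)^\bot$. Second, since $H$ has compact support, the sums over $x_1 = 0$ and $y_1 = 0$ each contribute $O(N^{d-1})$ terms in the longitudinal directions $x_2,\dots,x_d$ (and the $s,r$ integrals contribute a bounded factor $t^2$). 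Third, I would bound the double transverse sum: because $\Gamma$ is summable over $\Z^{d-1}$ when $d - 1 \geq 3$, i.e.\ $\sum_{\omega \in \Z^{d-1}} \Gamma(\omega) < \infty$, the sum $\sum_{y^\bot} \Gamma((x-y)^\bot)$ is bounded uniformly in $x$. Collecting the powers of $N$: the prefactor from $H(\tfrac x N) H(\tfrac y N)$ supplies no extra $N$, the longitudinal index $x^\bot$ ranges over $O(N^{d-1})$ values, the summability of $\Gamma$ handles $y^\bot$, and the remaining free longitudinal sum over $y^\bot$ within the support of $H$ gives another $O(N^{d-1})$ — so one must check the counting carefully to land on $N^{d+1}$ rather than a different power.

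**The main obstacle** is the exact power-counting in $N$, and in particular verifying that transience of $V$ on $\Z^{d-1}$ (valid precisely because $d \geq 4$, so $d-1 \geq 3$) yields a summable return kernel $\Gamma$. The naive bound ${\rm Cov} \leq 1$ would give each double sum over the hyperplane a factor $N^{2(d-1)}$, which combined with the two time integrals is far too large; the gain comes entirely from replacing this by the summable $\Gamma((x-y)^\bot)$, which collapses one transverse sum to a constant. I expect the clean way to organize the bound is to fix $x$, sum $\Gamma((x-y)^\bot)$ over $y^\bot$ to a constant $C_\Gamma$, then count the free indices: the support of $H$ restricts the longitudinal coordinates to $O(N)$ values each but leaves the transverse coordinate $x^\bot$ ranging over $O(N^{d-1})$ sites, while $y^\bot$ is absorbed by $C_\Gamma$ and the remaining longitudinal freedom in $y$ gives $O(N)$. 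One should double-check whether the stated exponent $d+1$ is sharp or merely an upper bound; in either case the proof is a matter of assembling the uniform covariance bound with the transience estimate and a transparent accounting of how the compact support of $H$ and the summability of $\Gamma$ restrict the ranges of summation.
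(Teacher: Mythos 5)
Your setup follows the paper's route exactly: expand the variance into a double time integral of covariances over the hyperplane, then use positivity of correlations (Assumption (A)) and duality to get $0 \leq \mathrm{Cov}(\eta_s(x),\eta_r(y)) \leq \P\big(\tau_{x,y}^{N,\beta,(s-r)N^2}<+\infty\big) \leq \Gamma\big((x-y)^\bot\big)$. Up to that point you match the paper's proof. But the step where you close the estimate contains a genuine error: $\Gamma$ is \emph{not} summable over $\Z^{d-1}$. The hitting probability of a transient simple random walk on $\Z^{d-1}$ satisfies $\Gamma(z) \asymp |z|^{2-(d-1)} = |z|^{3-d}$, so
\[
\sum_{z \in \Z^{d-1},\,|z|\leq R} \Gamma(z) \;\asymp\; \sum_{r=1}^{R} r^{d-2}\cdot r^{3-d} \;=\; \sum_{r=1}^{R} r \;\asymp\; R^2,
\]
which diverges as $R\to\infty$ (hitting probabilities of a transient walk are comparable to its Green's function and are never summable over the whole lattice). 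Consequently the inner sum $\sum_{y^\bot}\Gamma((x-y)^\bot)$ over the support of $H$ is not $O(1)$, as you claim, but $O(N^2)$.

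This is not cosmetic: it is precisely where the exponent $d+1$ comes from. The correct count, which is the paper's proof, is: $O(N^{d-1})$ choices of $x^\bot$ in the support of $H$, times $O(N^2)$ for $\sum_{y^\bot:\,|y^\bot|\leq CN}|(x-y)^\bot|^{3-d}$, times the bounded factor $t^2$ from the two time integrals, giving $Ct^2N^{d+1}$. Your accounting, built on the false summability, variously lands on $N^{d-1}$ or $N^d$; note that either would beat the bound $CN^d$ that the authors only \emph{conjecture} to be sharp in Remark \ref{rmk5.3}, which should have been a red flag. Your bookkeeping is also confused about the geometry: both $x$ and $y$ lie on the hyperplane $\{x_1=0\}$, so there is no free ``longitudinal'' coordinate ranging over $O(N)$ values --- the only free indices are $x^\bot,y^\bot\in\Z^{d-1}$, each restricted by the compact support of $H$ to $O(N^{d-1})$ sites. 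Replacing your summability claim by the quantitative decay $\Gamma(z)\leq C|z|^{3-d}$ together with the restricted-sum estimate above closes the proof.
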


\begin{remark}\label{rmk5.3}
We believe the above bound is not sharp, and the correct bound should be $CN^d$. If this is the case, Theorem \ref{thm:fluctuation} also holds for $1 < \beta \leq 3/2$.
\end{remark}

\begin{proof}
We first develop the left side in \eqref{fluc-var} as
\[2 \int_0^t \int_0^s \sum_{x,y:x_1=0,y_1 = 0} H(\tfrac{x}{N}) H(\tfrac{y}{N}) \E \big[\bar{\eta}_s (x) \bar{\eta}_{s^\prime} (y)\big] \,ds^\prime \,ds.\]
For $s^\prime < s$, following the proof of \eqref{equ 4.3.4 covariance vanish},
\[\E \big[\bar{\eta}_s (x) \bar{\eta}_{s^\prime} (y)\big] \leq \P \big(\tau_{x,y}^{N,\beta,(s-s^\prime)N^2} < +\infty\big) \leq \Gamma \big((x-y)^\bot\big).\]
Recall for $z \in \Z^{d-1}$, $\Gamma (z)$ is the  probability of the simple random walk on $\Z^{d-1}$ ever hitting the origin when starting from $z$. It is well known that $\Gamma (z) \leq C |z|^{2-(d-1)}$ for some constant $C$. Thus, we could bound  the left side in \eqref{fluc-var} by
\[Ct^2 \sum_{x,y:x_1=0,y_1 = 0} H(\tfrac{x}{N}) H(\tfrac{y}{N})  |(x-y)^\bot|^{3-d} \leq C t^2 N^{d+1}\]
for some constant $C=C(H)$. This concludes the proof.
\end{proof}

\begin{theorem}\label{theorem 5.2.1}
Under {\bf Assumption (A)}, for any $\beta \geq 0$,
	\[
	\lim_{N\rightarrow+\infty}\frac{1}{N^d}\sum_{x,y\in \mathbb{Z}^d, |x-y|=1}\xi_{x,y}^N\left(\eta_s(x)-\eta_s(y)\right)^2H^2\left(\tfrac{x}{N}\right)
	=4d (1-\gamma_d)\int_{\mathbb{R}^d}\rho(s,u) (1- \rho (s,u)) H^2(u)du
	\]
	in $L^2$ for any $s>0$, where $\gamma_d$ is the probability that the simple random walk on $\mathbb{Z}^d$ starting at $0$ returns to $0$ again, and $\rho (s,u)$ is the solution to the corresponding hydrodynamic equation introduced in Subsection \ref{subsec:hl}.
\end{theorem}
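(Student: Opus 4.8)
The plan is to prove convergence in $L^2$ to a deterministic limit by showing separately that the mean of the left-hand side converges to the right-hand side and that its variance tends to zero; since the target is deterministic, these two facts together give $L^2$ convergence. Throughout I use that $\eta_s$ takes values in $\{0,1\}$, so that $(\eta_s(x)-\eta_s(y))^2=\mathbbm{1}\{\eta_s(x)\neq\eta_s(y)\}$, and I write $\sigma=sN^2$ for the microscopic time.

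\emph{Convergence of the mean.} By the duality relations \eqref{equ duality2}--\eqref{equ duality1}, $\E[(\eta_s(x)-\eta_s(y))^2]$ can be expressed through a pair of \emph{coalescing} random walks started at the neighbouring sites $x,y$ and run up to time $\sigma$: the contribution vanishes once the walks have coalesced, and otherwise equals the chance that two independent Bernoulli variables read at the two endpoints differ. I would compare this with the same quantity for two \emph{independent} slow-bond walks $A,B$, coupling the two systems so that they agree until the first meeting time $T$. For the independent system the endpoints spread to scale $N$, coincide with probability $O(N^{-d})$, and the invariance principle (Theorem \ref{theorem invariance}) together with $\E[\rho_0(W^{u}_{s,\beta})]=\rho(s,u)$ yields the limit $2\rho(s,u)(1-\rho(s,u))$ whenever $x/N\to u$ with $u_1\neq 0$. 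The coalescing and independent quantities differ only on $\{T\leq\sigma\}$, where the coalescing contribution is $0$; on that event $T=O(1)$ and the meeting site is $O(1)$ from $x$, so the independent continuation again gives $2\rho(1-\rho)$, while $\P(T\leq\sigma)\to\gamma_d$, the return probability of the simple random walk. Hence $\E[(\eta_s(x)-\eta_s(y))^2]\to 2(1-\gamma_d)\rho(s,u)(1-\rho(s,u))$. Summing over the $2d$ neighbours of each $x$ and recognising a Riemann sum yields $\E$ of the left-hand side $\to 4d(1-\gamma_d)\int_{\R^d}\rho(s,u)(1-\rho(s,u))H^2(u)\,du$. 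The sites on or adjacent to the membrane $\{x_1\in\{0,1\}\}$, together with the slow rates $\alpha N^{-\beta}$, touch only $O(N^{d-1})$ terms and are negligible after dividing by $N^d$; this is exactly why the statement holds for every $\beta\geq 0$.

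\emph{Vanishing of the variance.} Expanding the variance as a double sum over neighbouring pairs $(x,y)$ and $(x',y')$, it suffices to bound the covariance $\mathrm{Cov}\big((\eta_s(x)-\eta_s(y))^2,(\eta_s(x')-\eta_s(y'))^2\big)$. I would invoke the four-walk duality: under the product initial law, readings of $\eta_0$ at disjoint sites are independent, so the two indicators decouple up to a negligible error unless a dual walk issued from $\{x,y\}$ meets one issued from $\{x',y'\}$. As in the proof of \eqref{equ 4.3.4 covariance vanish}, the probability of such an interaction is controlled by the hitting estimate $\Gamma\big((x-x')^\bot\big)\leq C|(x-x')^\bot|^{3-d}$ for the simple random walk on $\Z^{d-1}$, giving $|\mathrm{Cov}|\lesssim|(x-x')^\bot|^{3-d}$. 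Summing, and using that for fixed $(x,y)$ the first coordinate $x_1'$ ranges over $O(N)$ values while $\sum_{z\in\Z^{d-1},\,|z|\leq CN}|z|^{3-d}=O(N^{2})$, one obtains $\mathrm{Var}\lesssim N^{-2d}\cdot N^{d}\cdot N\cdot N^{2}=O(N^{3-d})$, which tends to zero precisely because $d\geq 4$.

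\emph{Main obstacle.} The delicate point is the covariance bound in the variance step: it requires tracking four coalescing slow-bond walks and showing that, absent an interaction between the two pairs, the joint law of the four endpoints factorises up to a negligible error, uniformly in $N$ and in the presence of the slow membrane. Securing the decay $|(x-x')^\bot|^{3-d}$ is what makes the summation beat the $N^{2d}$ pairs exactly in dimension $d\geq 4$; by contrast, once the coalescence correction is isolated, the mean computation is a routine application of the invariance principle.
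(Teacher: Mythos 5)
Your proposal is correct and follows essentially the same route as the paper's proof, which likewise splits the $L^2$ convergence into a mean computation (Lemma \ref{lemma 5.2.2}: coalescing slow-bond dual walks, strong Markov property, coalescence probability converging to $\gamma_d$ away from the membrane, and the invariance principle) and a vanishing-variance estimate (Lemma \ref{lemma 5.2.3}: coupling the four dual walks with two independent pairs and controlling the interaction by the transverse hitting function $\Gamma$ on $\Z^{d-1}$, using $d-1\geq 3$). The only cosmetic differences are that you treat the disagreement indicator $\mathbbm{1}\{\eta_s(x)\neq\eta_s(y)\}$ directly rather than expanding $(\eta_s(x)-\eta_s(y))^2=\eta_s(x)+\eta_s(y)-2\eta_s(x)\eta_s(y)$ into linear and cross terms, and that your variance step uses the quantitative decay $\Gamma(z)\leq C|z|^{3-d}$ (as the paper does in Theorem \ref{theorem 5.2.0}) to get an explicit rate $O(N^{3-d})$, where the paper's Lemma \ref{lemma 5.2.3} runs a soft $\epsilon$-argument.
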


It is easy to check that Theorem \ref{theorem 5.2.1} is a direct corollary of the following two Lemmas.

\begin{lemma}\label{lemma 5.2.2}
Under {\bf Assumption (A)}, for any $\beta \geq 0$,
	\[
	\lim_{N\rightarrow+\infty}\frac{1}{N^d}\sum_{x, y\in \mathbb{Z}^d, |x-y|=1}\mathbb{E}\left(\eta_s(x)\eta_s(y)\right)H^2\left(\tfrac{x}{N}\right)
	=2d\int_{\mathbb{R}^d}\left(\gamma_d \rho (s,u)+(1-\gamma_d) \rho(s,u)^2\right) H^2(u)du.
	\]
\end{lemma}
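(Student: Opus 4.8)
The plan is to start from the equal--time specialization of the duality relation \eqref{equ duality1}. Writing $\eta_s=\tilde\eta_{sN^2}$ and taking the two times there equal to $sN^2$ (so that the frozen walk is released at time $0$), Assumption (A) turns the weights $\mathbb{P}(\tilde\eta_0(\cdot)=1)$ into values of $\rho_0$ and yields $\mathbb{E}[\eta_s(x)\eta_s(y)]=A_N(x,y)+B_N(x,y)$, where, abbreviating $\tau:=\tau^{N,\beta,0}_{x,y}$,
\[
A_N=\mathbb{E}\Big[\rho_0\big(\tfrac{X^{N,x}_{sN^2,\beta}}{N}\big)\mathbf{1}\{\tau\le sN^2\}\Big],\qquad
B_N=\mathbb{E}\Big[\rho_0\big(\tfrac{X^{N,x}_{sN^2,\beta}}{N}\big)\rho_0\big(\tfrac{\hat X^{N,y,0}_{sN^2,\beta}}{N}\big)\mathbf{1}\{\tau> sN^2\}\Big],
\]
and $X^{N,x}_{\cdot,\beta},\hat X^{N,y,0}_{\cdot,\beta}$ are independent copies of the dual walk. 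I would fix a neighbour direction $e$, set $y=x+e$, and prove the pointwise statement $\mathbb{E}[\eta_s(x)\eta_s(x+e)]\to \gamma_d\rho(s,u)+(1-\gamma_d)\rho(s,u)^2$ whenever $x/N\to u$ with $u_1\ne0$. Since the summand is bounded by $\|H\|_\infty^2$ and $\{u_1=0\}$ is Lebesgue--null, a Riemann--sum/dominated--convergence argument upgrades this to the stated integral, the factor $2d$ arising from the $2d$ neighbours $e$, all producing the same limit.

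The first analytic input is the meeting probability $\mathbb{P}(\tau\le sN^2)\to\gamma_d$. The difference process $X^{N,x}_{\cdot,\beta}-\hat X^{N,y,0}_{\cdot,\beta}$ has transverse part (coordinates $2,\dots,d$) equal to a fixed, $N$--independent and membrane--free symmetric walk on $\mathbb{Z}^{d-1}$, which is transient because $d-1\ge3$; hence the probability that the two walks meet at any time $\ge T_0$ is bounded by the probability that this transverse walk visits the origin at some time $\ge T_0$, which tends to $0$ as $T_0\to\infty$ uniformly in $N$. On the window $[0,T_0]$, since $|x_1|\to\infty$ the two first coordinates stay away from the slow bond $(0,1)$ with probability tending to one, so there the walks evolve as ordinary random walks and meet with the membrane--free probability. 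For ordinary independent walks started at adjacent sites, the embedded chain of the difference walk is simple random walk started at a neighbour of the origin, and a first--step decomposition of the return probability gives its hitting probability of $0$ exactly equal to $\gamma_d$. Letting $N\to\infty$ and then $T_0\to\infty$ yields $\mathbb{P}(\tau\le sN^2)\to\gamma_d$ and $\mathbb{P}(sN^2<\tau<\infty)\to0$.

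The second input decouples the meeting event from the endpoint. The transience estimate shows that, conditioned on meeting, $\tau$ is tight: with probability tending to one (after $N\to\infty$, then $T_0\to\infty$) one has $\tau\le T_0$, so the meeting location lies within $O(\sqrt{T_0})=o(N)$ of $x$. Applying the strong Markov property at $\tau$ together with the invariance principle (Theorem \ref{theorem invariance}(i)), on $\{\tau\le T_0\}$ one gets $\mathbb{E}\big[\rho_0(X^{N,x}_{sN^2,\beta}/N)\mid\mathcal{F}_\tau\big]\to\rho(s,u)=\mathbb{E}[\rho_0(B^{u}_{2s,\beta})]$, since the remaining rescaled time $s-\tau/N^2\to s$ and the rescaled starting point converges to $u$; this gives $A_N\to\gamma_d\rho(s,u)$. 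For $B_N$ I would write $B_N=B_N^{(1)}-B_N^{(2)}$ with $B_N^{(1)}=\mathbb{E}[\rho_0(X^{N,x}_{sN^2,\beta}/N)]\,\mathbb{E}[\rho_0(\hat X^{N,y,0}_{sN^2,\beta}/N)]\to\rho(s,u)^2$ by independence and \eqref{equ duality2}, while $B_N^{(2)}=\mathbb{E}[\cdots\mathbf{1}\{\tau\le sN^2\}]$; the strong Markov property at $\tau$ (after coinciding, the two walks remain conditionally independent) and the same invariance--principle input give $B_N^{(2)}\to\gamma_d\rho(s,u)^2$. Combining, $A_N+B_N\to\gamma_d\rho(s,u)+(1-\gamma_d)\rho(s,u)^2$.

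The main obstacle is precisely this meeting--probability/decoupling step: in contrast to the replacement lemma, where only the crude bound $\mathbb{P}(\tau<\infty)\le\Gamma((x-y)^{\bot})$ was needed, here one needs the exact constant $\gamma_d$ together with the asymptotic independence of the meeting event and the final position. The two structural facts that make this feasible are the transience of the membrane--free transverse walk (using $d-1\ge3$), which simultaneously pins the meeting probability to $\gamma_d$ and forces meetings onto the microscopic time scale, and the invariance principle of Theorem \ref{theorem invariance}, which identifies the endpoint law with $\rho(s,u)$ for $u_1\ne0$.
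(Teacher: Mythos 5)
Your proposal is correct and follows essentially the same route as the paper's proof: the identical duality decomposition into a meeting term and a non-meeting term (with the latter split into a full product minus a meeting-restricted product), the same key lemma that $\mathbb{P}(\tau_{x,y}^{N,\beta,0}<\infty)\to\gamma_d$ away from the membrane (proved, as in the paper's Eq.~\eqref{equprL522eight}, by coupling with membrane-free walks until a first coordinate approaches the slow bond, combined with transience of the transverse walk on $\mathbb{Z}^{d-1}$), and the same strong-Markov-plus-invariance-principle decoupling of the meeting event from the endpoint law. The only difference is organizational — you prove a pointwise limit at macroscopic points $u$ with $u_1\neq 0$ and integrate by dominated convergence, whereas the paper tracks uniform error terms $\varepsilon_1,\dots,\varepsilon_7$ directly in the Riemann sum — which yields the same estimates in substance.
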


\begin{lemma}\label{lemma 5.2.3}
Under {\bf Assumption (A)}, for any $\beta \geq 0$,
	\[
	\lim_{N\rightarrow+\infty}{\rm Var}\left(\frac{1}{N^d}\sum_{x, y\in \mathbb{Z}^d, |x-y|=1}\eta_s(x)\eta_s(y)H^2\left(\tfrac{x}{N}\right)\right)=0.
	\]
\end{lemma}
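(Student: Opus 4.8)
The plan is to expand the variance as a weighted double sum of covariances and to bound each covariance by a meeting probability of random walks via the duality. Writing $A_N = \frac{1}{N^d}\sum_{|x-y|=1}\eta_s(x)\eta_s(y)H^2(\tfrac xN)$ and using that the covariance is bilinear,
\[
{\rm Var}(A_N)=\frac{1}{N^{2d}}\sum_{|x-y|=1}\sum_{|x'-y'|=1}H^2\big(\tfrac xN\big)H^2\big(\tfrac{x'}N\big)\,{\rm Cov}\big(\eta_s(x)\eta_s(y),\,\eta_s(x')\eta_s(y')\big).
\]
Since $|x-y|=1$ and $|x'-y'|=1$, for each $x$ (resp.\ $x'$) there are only $2d$ admissible choices of $y$ (resp.\ $y'$), so the whole problem reduces to a good pointwise bound on the four-point covariance together with a counting estimate.

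The key step is the bound
\[
\big|{\rm Cov}\big(\eta_s(x)\eta_s(y),\eta_s(x')\eta_s(y')\big)\big|\leq C\,\Gamma\big((x-x')^\bot\big),
\]
which I would obtain by extending the duality \eqref{equ duality1} to four points evaluated at the common time $s$. By duality, $\mathbb{E}[\eta_s(x)\eta_s(y)\eta_s(x')\eta_s(y')]$ is expressed through four coalescing random walks started from $x,y,x',y'$ and run for time $sN^2$, while $\mathbb{E}[\eta_s(x)\eta_s(y)]\,\mathbb{E}[\eta_s(x')\eta_s(y')]$ is expressed through two independent coalescing systems, one issued from $\{x,y\}$ and one from $\{x',y'\}$. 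Coupling the joint four-walk system with the two independent systems so that they coincide until a walk issued from $\{x,y\}$ first meets a walk issued from $\{x',y'\}$, the two expectations agree on the event that no such cross-meeting ever occurs; since all occupation variables lie in $[0,1]$, the covariance is bounded by the probability of a cross-meeting, hence by $\sum_{a\in\{x,y\}}\sum_{b\in\{x',y'\}}\mathbb{P}(\text{walk from }a\text{ meets walk from }b)$. Because the slow bond only affects the first coordinate, the difference of the perpendicular coordinates of any two walks is an ordinary symmetric random walk on $\mathbb{Z}^{d-1}$; the meeting probability is therefore at most the probability that this difference walk ever hits the origin, which is $\Gamma$ evaluated at the initial perpendicular difference. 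As $a,b$ lie within distance one of $x,x'$, this yields the displayed bound uniformly in $\beta\geq 0$, exactly as in the two-point estimate leading to \eqref{equ 4.3.4 covariance vanish}.

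It then remains to sum. Using the transience estimate $\Gamma(z)\leq C|z|^{3-d}$ recalled in the proof of Theorem \ref{theorem 5.2.0} (valid since $d-1\geq 3$), and the fact that the factors $H^2(\tfrac xN)$ and $H^2(\tfrac{x'}N)$, by the decay of $H$, effectively confine the relevant $x,x'$ to a box of side $O(N)$, I would estimate
\[
\sum_{x,x'}\Gamma\big((x-x')^\bot\big)\leq O(N^2)\sum_{x^\bot,x'^\bot}\Gamma\big(x^\bot-x'^\bot\big)=O(N^2)\cdot O(N^{d-1})\cdot O(N^2)=O(N^{d+3}),
\]
where the factor $O(N^2)$ comes from the free first coordinates $x_1,x_1'$, the factor $O(N^{d-1})$ counts the values of $x^\bot$, and $\sum_{|z|\lesssim N,\,z\in\mathbb{Z}^{d-1}}\Gamma(z)=O(N^2)$ follows from $\sum_{r\leq N}r^{d-2}\cdot r^{3-d}=O(N^2)$. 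Dividing by $N^{2d}$ gives ${\rm Var}(A_N)=O(N^{3-d})$, which vanishes for $d\geq 4$.

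The main obstacle is the covariance bound, namely setting up the four-point duality at a common time and the coupling argument that pins down exactly when the two dual groups decouple; once the covariance is controlled by $\Gamma((x-x')^\bot)$, the remaining summation is routine. Relative to the two-point computation already carried out, the extra difficulty is purely combinatorial—tracking the four-walk coalescing system and reducing a cross-meeting of the two groups to pairwise meetings—and the $\beta$-uniformity is automatic because only the perpendicular coordinates, which are unaffected by the slow bond, enter the meeting estimate.
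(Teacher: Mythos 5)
Your proposal is correct and follows essentially the same route as the paper: the four-point duality, the coupling of the four coalescing walks with two independent coalescing pairs until the first cross-meeting time, and the reduction of the cross-meeting probability to the hitting probability $\Gamma$ of the perpendicular simple random walk on $\mathbb{Z}^{d-1}$ (uniform in $\beta$, since the slow bond only affects the first coordinate) are exactly the paper's argument for its key claim. The only cosmetic difference is in the final summation: you invoke the quantitative bound $\Gamma(z)\leq C|z|^{3-d}$ to obtain an explicit rate $O(N^{3-d})$, whereas the paper splits the double sum at perpendicular distance $\epsilon N$ and uses only the qualitative statement that the covariance is uniformly small at large perpendicular separation, letting $\epsilon\rightarrow 0$ at the end.
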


\begin{proof}[Proof of Lemma \ref{lemma 5.2.2}]
According to {\bf Assumption (A)} and Equation \eqref{equ duality1},
\[\mathbb{E}\left[\eta_s(x)\eta_s(y)\right]
=\mathbb{E}\left[\rho_0\Big(\tfrac{X^{N,x}_{sN^2,\beta}}{N}\Big)1_{\{\tau_{x,y}^{N,\beta,0}\leq sN^2\}}\right]
+\mathbb{E}\left[\rho_0\left(\tfrac{X^{N,x}_{sN^2, \beta}}{N}\right)\rho_0\left(\tfrac{\hat{X}^{N,y,0}_{sN^2,\beta}}{N}\right)1_{\{\tau_{x,y}^{N,\beta,0}>sN^2\}}\right].\]
Thus,
\begin{equation}\label{equprL522One}
\frac{1}{N^d}\sum_{x, y\in \mathbb{Z}^d, |x-y|=1}\mathbb{E}[\eta_s(x)\eta_s(y)]H^2\left(\tfrac{x}{N}\right)
={\rm \uppercase\expandafter{\romannumeral1}}+{\rm \uppercase\expandafter{\romannumeral2}},
\end{equation}
where
\[
{\rm \uppercase\expandafter{\romannumeral1}}
=\frac{1}{N^d}\sum_{x, y\in \mathbb{Z}^d, |x-y|=1}\mathbb{E}\left[\rho_0\Big(\tfrac{X^{N,x}_{sN^2,\beta}}{N}\Big)1_{\{\tau_{x,y}^{N,\beta,0}\leq sN^2\}}\right]H^2\left(\tfrac{x}{N}\right)
\]
and
\[
{\rm \uppercase\expandafter{\romannumeral2}}
=\frac{1}{N^d}\sum_{x, y\in \mathbb{Z}^d, |x-y|=1}\mathbb{E}\left[\rho_0\left(\tfrac{X^{N,x}_{sN^2, \beta}}{N}\right)\rho_0\left(\tfrac{\hat{X}^{N,y,0}_{sN^2,\beta}}{N}\right)1_{\{\tau_{x,y}^{N,\beta,0}>sN^2\}}\right] H^2\left(\tfrac{x}{N}\right).
\]
As a result, to complete this proof, we only need to show that
\begin{equation}\label{equprL522two}
\lim_{N\rightarrow+\infty}{\rm \uppercase\expandafter{\romannumeral1}}
=2d\int_{\mathbb{R}^d}\gamma_d \rho_s (u) H^2(u)du
\end{equation}
and
\begin{equation}\label{equprL522three}
\lim_{N\rightarrow+\infty}{\rm \uppercase\expandafter{\romannumeral2}}
=2d\int_{\mathbb{R}^d}(1-\gamma_d)\rho_s^2 (u) H^2(u)du.
\end{equation}

We first check Equation \eqref{equprL522two}. As we have explained in Section \ref{sec:replacement},  $\left(X_{t,\beta}^{N,x}-\hat{X}_{t,\beta}^{N,y,0}\right)^\perp=V_{2t}^{(x-y)^\perp}$, where $\{V_t^v\}_{t\geq 0}$ is the simple random walk on $\mathbb{Z}^{d-1}$ with $V_0=v$ for each $v\in \mathbb{Z}^{d-1}$. As a result, for any $K>0$,
\[
\sup_{x\in \mathbb{Z}^d, |y-x|=1}\bb{P}\left(K\leq \tau_{x,y}^{N, \beta, 0}<+\infty\right)
\leq \sup_{|v|\leq 1}\bb{P}\left(V_{2t}^v=0\text{~for some~}t\geq K\right).
\]
Since $d-1\geq 3$, $\{V_{2t}\}_{t\geq 0}$ visits $0$ for finite times with probability $1$ and hence
\[
\lim_{K\rightarrow+\infty}\sup_{|v|\leq 1}\bb{P}\left(V_{2t}^v=0\text{~for some~}t\geq K\right)=0,
\]
which implies
\[
\lim_{K\rightarrow+\infty}\sup_{x\in \mathbb{Z}^d, |y-x|=1}\bb{P}\left(K\leq \tau_{x,y}^{N, \beta, 0}<+\infty\right)=0.
\]
Therefore, for any given $\epsilon>0$, there exists $K_1=K_1(\epsilon)>0$ such that
\[
\sup_{x\in \mathbb{Z}^d, |y-x|=1}\P\left(K_1\leq \tau_{x,y}^{N, \beta, 0}<+\infty\right)\leq \epsilon.
\]
Then, for sufficiently large $N$,
\[
{\rm \uppercase\expandafter{\romannumeral1}}
=\frac{1}{N^d}\sum_{x, y\in \mathbb{Z}^d, |x-y|=1}\mathbb{E}\left[\rho_0\Big(\tfrac{X^{N,x}_{sN^2,\beta}}{N}\Big)1_{\{\tau_{x,y}^{N,\beta,0}\leq K_1\}}\right]H^2\left(\tfrac{x}{N}\right)+\varepsilon_1,
\]
where
\[
|\varepsilon_1|\leq 4d\epsilon\int_{\mathbb{R}^d}H^2(u)du.
\]
Since $X^{N,x}_{t,\beta}$ jumps at rate at most $2d+\alpha$, for any integer $K>0$,
\[
\bb{P} \left(\sup_{0\leq t\leq K_1}\left|X^{N,x}_{t,\beta}-x\right|\geq K\right)\leq \bb{P} \left(\sum_{i=1}^KT_i\leq K_1\right),
\]
where $\{T_i\}_{i=1}^K$ are independent exponential times with rate $2d+\alpha$. Therefore,
\[
\lim_{K\rightarrow+\infty}\sup_{x\in \mathbb{Z}^d}\bb{P}\left(\sup_{0\leq t\leq K_1}\left|X^{N,x}_{t,\beta}-x\right|\geq K\right)=0.
\]
Equivalently, there exists $K_2=K_2(\epsilon)>0$ such that \[\sup_{x\in \mathbb{Z}^d}\bb{P}\left(\sup_{0\leq t\leq K_1}\left|X^{N,x}_{t,\beta}-x\right|\geq K_2\right)\leq \epsilon.\]
On the event $\{\tau_{x,y}^{N,\beta,0}\leq K_1\}$, $\left|X^{N,x}_{\tau_{x,y}^{N,\beta,0},\beta}-x\right|\geq K_2$ implies that $\sup_{0\leq t\leq K_1}\left|X^{N,x}_{t,\beta}-x\right|\geq K_2$. Thus,
\begin{equation}\label{equprL522five}
{\rm \uppercase\expandafter{\romannumeral1}}
=\frac{1}{N^d}\sum_{x, y\in \mathbb{Z}^d, |x-y|=1}\mathbb{E}\left[\rho_0\left(\tfrac{X^{N,x}_{sN^2,\beta}}{N}\right)1_{\left\{\tau_{x,y}^{N,\beta,0}\leq K_1\right\}}1_{\left\{\left|X^{N,x}_{\tau_{x,y}^{N,\beta,0},\beta}-x\right|\leq K_2\right\}}\right]H^2\left(\tfrac{x}{N}\right)+\varepsilon_1+\varepsilon_2
\end{equation}
when $N$ is sufficiently large, where
\[
|\varepsilon_2|\leq 4d\epsilon\int_{\mathbb{R}^d}H^2(u)du.
\]
Since $H^2$ is integrable, we can choose $K_3>0$ such that $\int_{|u|\geq K_3}H^2(u)du<\epsilon$. Since $\rho_0$ is global Lipschitz, according to Theorem \ref{theorem invariance},
\begin{equation}\label{equprL522six}
\lim_{N\rightarrow+\infty}\sup_{x,w\in \mathbb{Z}^d,\theta>0: \atop
\epsilon N\leq|x|\leq K_3N, |w-x|\leq K_2,
\theta<K_1}\left|\mathbb{E}\rho_0\left(\frac{X_{sN^2,\beta}^{N,x}}{N}\right)-\mathbb{E}\rho_0\left(\frac{X_{sN^2-\theta,\beta}^{N,w}}{N}\right)\right|=0.
\end{equation}
According to the strong Markov property,
\begin{align*}
&\mathbb{E}\left[\rho_0\left(\tfrac{X^{N,x}_{sN^2,\beta}}{N}\right)1_{\left\{\tau_{x,y}^{N,\beta,0}\leq K_1\right\}}1_{\left\{\left|X^{N,x}_{\tau_{x,y}^{N,\beta,0},\beta}-x\right|\leq K_2\right\}}\right]\\
&=\mathbb{E}\left[1_{\left\{\tau_{x,y}^{N,\beta,0}\leq K_1\right\}}1_{\left\{\left|X^{N,x}_{\tau_{x,y}^{N,\beta,0},\beta}-x\right|\leq K_2\right\}}
\mathbb{E}\rho_0\bigg(\frac{X_{sN^2-\tau_{x,y}^{N,\beta,0},\beta}^{N,X^{N,x}_{\tau_{x,y}^{N,\beta,0},\beta}}}{N}\bigg)\right].
\end{align*}
By Equation \eqref{equprL522six}, for sufficiently large $N$ and $x\in \mathbb{Z}^d$ such that $\epsilon N\leq|x|\leq NK_3$,
\[
\left|\mathbb{E}\rho_0\left(\frac{X_{sN^2-\tau_{x,y}^{N,\beta,0},\beta}^{N,X^{N,x}_{\tau_{x,y}^{N,\beta,0},\beta}}}{N}\right)-
\mathbb{E}\rho_0\left(\frac{X_{sN^2,\beta}^{N,x}}{N}\right)\right|\leq \epsilon
\]
conditioned on $\left|X^{N,x}_{\tau_{x,y}^{N,\beta,0},\beta}-x\right|\leq K_2$ and $\tau_{x,y}^{N,\beta,0}\leq K_1$. Then by Equation \eqref{equprL522five},
\[
{\rm \uppercase\expandafter{\romannumeral1}}
=\frac{1}{N^d}\sum_{x, y\in \mathbb{Z}^d: \atop \epsilon N\leq |x|\leq K_3N, |x-y|=1} \bb{P}\left(\left|X^{N,x}_{\tau_{x,y}^{N,\beta,0},\beta}-x\right|\leq K_2, \tau_{x,y}^{N,\beta,0}\leq K_1\right)\mathbb{E}\left[\rho_0\left(\tfrac{X^{N,x}_{sN^2,\beta}}{N}\right)\right]H^2\left(\tfrac{x}{N}\right)+\sum_{j=1}^3\varepsilon_j
\]
for sufficiently large $N$, where
\[
|\varepsilon_3|\leq 4d\epsilon+4d\int_{|u|\leq \epsilon}H^2(u)du+4d\epsilon\int_{\mathbb{R}^d}H^2(u)du.
\]
Then, according to the definition of $K_3$,
\[
{\rm \uppercase\expandafter{\romannumeral1}}
=\frac{1}{N^d}\sum_{x,y\in \mathbb{Z}^d, |x-y|=1}\bb{P}\left(\left|X^{N,x}_{\tau_{x,y}^{N,\beta,0},\beta}-x\right|\leq K_2, \tau_{x,y}^{N,\beta,0}\leq K_1\right)\mathbb{E}\left[\rho_0\left(\tfrac{X^{N,x}_{sN^2,\beta}}{N}\right)\right]H^2\left(\tfrac{x}{N}\right)+\sum_{j=1}^4\varepsilon_j
\]
for sufficiently large $N$, where
\[
|\varepsilon_4|\leq 4d\epsilon+4d\int_{|u|\leq \epsilon}H^2(u)du.
\]
Then, according to definitions of $K_1$ and $K_2$,
\begin{equation}\label{equ prL522seven}
{\rm \uppercase\expandafter{\romannumeral1}}
=\frac{1}{N^d}\sum_{x,y\in \mathbb{Z}^d, |x-y|=1}\bb{P}\left( \tau_{x,y}^{N,\beta,0}<+\infty\right)\mathbb{E}\left[\rho_0\left(\tfrac{X^{N,x}_{sN^2,\beta}}{N}\right)\right]H^2\left(\tfrac{x}{N}\right)+\sum_{j=1}^5\varepsilon_j
\end{equation}
for sufficiently large $N$, where
\[
|\varepsilon_5|\leq 8d\epsilon\int_{\mathbb{R}^d}H^2(u)du.
\]
We claim that
\begin{equation}\label{equprL522eight}
\limsup_{|x_1|\rightarrow+\infty}\sup_{|y-x|=1, N\geq 1}\left|\bb{P}\left(\tau_{x,y}^{N, \beta, 0}<+\infty\right)-\gamma_d\right|=0,
\end{equation}
where $x_1$ is the first coordinate of $x$. We prove Equation \eqref{equprL522eight} later. By Equation \eqref{equprL522eight}, there exists $K_4=K_4(\epsilon)>0$ such that
\[
\left|\bb{P}\left(\tau_{x,y}^{N, \beta, 0}<+\infty\right)-\gamma_d\right|\leq \epsilon
\]
for any $N\geq 1$ and $x,y\in \mathbb{Z}^d$ such that $|x_1|\geq K_4$ and $|x-y|=1$. Since $\epsilon N\geq K_4$ when $N$ is sufficiently large, by Equation \eqref{equ prL522seven},
\[
{\rm \uppercase\expandafter{\romannumeral1}}
=\frac{1}{N^d}\sum_{|x_1|\geq \epsilon N, |x-y|=1}\gamma_d\mathbb{E}\left[\rho_0\left(\tfrac{X^{N,x}_{sN^2,\beta}}{N}\right)\right]H^2\left(\tfrac{x}{N}\right)+\sum_{j=1}^6\varepsilon_j
\]
for sufficiently large $N$, where
\[
|\varepsilon_6|\leq 4d\epsilon \int_{\mathbb{R}^d}H^2(u)du+4d\int_{|u_1|\leq\epsilon}H^2(u)du.
\]
Then,
\[
{\rm \uppercase\expandafter{\romannumeral1}}
=\frac{1}{N^d}\sum_{x\in \mathbb{Z}^d, |x-y|=1}\gamma_d\mathbb{E}\left[\rho_0\left(\tfrac{X^{N,x}_{sN^2,\beta}}{N}\right)\right]H^2\left(\tfrac{x}{N}\right)+\sum_{j=1}^7\varepsilon_j
\]
for sufficiently large $N$, where
\[
|\varepsilon_7|\leq 4d\int_{|u_1|\leq\epsilon}H^2(u)du.
\]
Then, by Theorem \ref{theorem invariance},
\[
\limsup_{N\rightarrow+\infty}\left|{\rm \uppercase\expandafter{\romannumeral1}}-2d\gamma_d\int_{\mathbb{R}^d}\mathbb{E}[\rho_0\left(B_{2s,\beta}^u\right)]H^2(u)du\right|\leq \sum_{j=1}^7|\varepsilon_j|.
\]
Since $\epsilon$ is arbitrary, let $\epsilon\rightarrow 0$, then $\sum_{j=1}^7|\varepsilon_j|\rightarrow 0$ and hence
\[
\lim_{N\rightarrow+\infty}{\rm \uppercase\expandafter{\romannumeral1}}=2d\gamma_d\int_{\mathbb{R}^d}\mathbb{E}\big[\rho_0(B_{2s,\beta}^u)\big]H^2(u)du.
\]
According to \cite[Proposition 2.3]{Erhard2021RandomWalkwithSlowBondandSnappingOutBrownianMotion}, $\mathbb{E}\big[\rho_0(B_{2s,\beta}^u)\big]=\rho(s, u)$ and hence Equation \eqref{equprL522two} holds.

Now we check Equation \eqref{equprL522eight} to complete the proof of Equation \eqref{equprL522two}. For $x,y\in \mathbb{Z}^d$ such that $|x-y|=1$, let $X_{t,0}^{N,x}, \hat{X}_{t,0}^{N,y,0}$ be defined as in Section \ref{sec:replacement} with $\alpha=1$, i.e., $\{X_{t,0}^{N,x}\}_{t\geq 0}$ and $\{\hat{X}_{t,0}^{N,y,0}\}_{t\geq 0}$ are independent simple random walks on $\mathbb{Z}^d$ starting at $x$ and $y$ respectively. Since $\{X_{t,0}^{N,x}-\hat{X}_{t,0}^{N,y,0}\}_{t\geq 0}$ is an accelerated version of the simple random walk on $\mathbb{Z}^d$ starting at a neighbor of $0$, we have
\[
\bb{P} \left(\tau_{x,y}^{N, 0, 0}<+\infty\right)=\gamma_d,
\]
where $\tau_{x,y}^{N,0,0}$ is the collision time of $X_{t,0}^{N,x}$ and $\hat{X}_{t,0}^{N,y,0}$ as defined in Section \ref{sec:replacement}. Hence,
\begin{equation}\label{equprL522nine}
\left|\P\left(\tau_{x,y}^{N, \beta, 0}<+\infty\right)-\gamma_d\right|
=\left|\P\left(\tau_{x,y}^{N, \beta, 0}<+\infty\right)-\P\left(\tau_{x,y}^{N, 0, 0}<+\infty\right)\right|.
\end{equation}
Let
\[
\psi_{x,y}^N=\inf\left\{t:~\left(X_{t,0}^{N,x}\right)_1\in \{0, 1\}\text{~or~}\left(\tilde{X}^{N,y,0}_{t,0,x}\right)_1\in \{0, 1\}\right\},
\]
then $\left\{\left(X_{t,\beta}^{N, x}, \tilde{X}_{t,\beta, x}^{N, y, 0}\right)\right\}_{t\geq 0}$ and $\left\{\left(X_{t,0}^{N, x}, \tilde{X}_{t,0, x}^{N, y, 0}\right)\right\}_{t\geq 0}$ can be coupled in the same probability space such that
\[
\left(X_{t,\beta}^{N, x}, \tilde{X}_{t,\beta, x}^{N, y, 0}\right)=\left(X_{t,0}^{N, x}, \tilde{X}_{t,0, x}^{N, y, 0}\right)
\]
for $0\leq t\leq \psi_{x,y}^N$. As a result, for any $K>0$,
\[
\P\left(\tau_{x,y}^{N,\beta,0}<K, \psi_{x,y}^N\geq K\right)=\P\left(\tau_{x,y}^{N,0,0}<K, \psi_{x,y}^N\geq K\right).
\]
Therefore,
\begin{equation}\label{equ equprL522ten}
\left|\P\left(\tau_{x,y}^{N, \beta, 0}<+\infty\right)-\P\left(\tau_{x,y}^{N, 0, 0}<+\infty\right)\right|
\leq {\rm \uppercase\expandafter{\romannumeral3}}+{\rm \uppercase\expandafter{\romannumeral4}},
\end{equation}
where
\[
{\rm \uppercase\expandafter{\romannumeral3}}=\P\left(K\leq \tau_{x,y}^{N, \beta, 0}<+\infty\right)
+\P\left(K\leq \tau_{x,y}^{N, 0, 0}<+\infty\right)
\]
and
\[
{\rm \uppercase\expandafter{\romannumeral4}}=2\P\left(\psi_{x,y}^N<K\right).
\]
Let $\{U_t\}_{t\geq 0}$ be the simple random walk on $\mathbb{Z}^1$ defined as in Section \ref{sec:preliminaries} with $U_0=0$, then for $x,y$ such that $|x-y|=1$,
\[
\P\left(\psi_{x,y}^N<K\right)\leq 2\P\left(\max_{0\leq t\leq K}|U_t|\geq |x_1|-2\right),
\]
which converges to $0$ as $|x_1|\rightarrow +\infty$. Hence,
\begin{equation}\label{equ prL522eleven}
\lim_{|x_1|\rightarrow+\infty}\sup_{N\geq 1, x,y\in \mathbb{Z}^d, |x-y|=1}{\rm \uppercase\expandafter{\romannumeral4}}=0.
\end{equation}
Since $\left(X_{t,\beta}^{N,x}-\hat{X}_{t,\beta}^{N,y,0}\right)^\perp=V_{2t}^{(x-y)^\perp}$ for any $\beta\geq 0$ as we have explained,
\[
{\rm \uppercase\expandafter{\romannumeral3}}\leq 2\sup_{|v|\leq 1}\P\left(V_{2t}^v=0 \text{~for some~}t\geq K\right).
\]
As a result, by Equations \eqref{equprL522nine}-\eqref{equ prL522eleven},
\begin{equation}\label{equprL522twelve}
\limsup_{|x_1|\rightarrow+\infty}\sup_{|y-x|=1, N\geq 1}\left|\P\left(\tau_{x,y}^{N, \beta, 0}<+\infty\right)-\gamma_d\right|
\leq 2\sup_{|v|\leq 1}\P\left(V_{2t}=0 \text{~for some~}t\geq K\right)
\end{equation}
for any $K>0$. As we have explained, $\{V_{2t}\}_{t\geq 0}$ visits $0$ for finite times with probability $1$ and hence $\lim_{K\rightarrow+\infty}\sup_{|v|\leq 1}\P\left(V_{2t}=0 \text{~for some~}t\geq K\right)=0$. Therefore, let $K\rightarrow+\infty$ in Equation \eqref{equprL522twelve} and then Equation \eqref{equprL522eight} holds. 

Now we check Equation \eqref{equprL522three}. The proof of Equation \eqref{equprL522three} is similar with that of Equation \eqref{equprL522two} and hence we only give an outline. According to the definition of ${\rm \uppercase\expandafter{\romannumeral2}}$,
\[
{\rm \uppercase\expandafter{\romannumeral2}}={\rm \uppercase\expandafter{\romannumeral5}}-{\rm \uppercase\expandafter{\romannumeral6}},
\]
where
\[
{\rm \uppercase\expandafter{\romannumeral5}}=\frac{1}{N^d}\sum_{x, y\in \mathbb{Z}^d, |x-y|=1}\mathbb{E}\left[\rho_0\left(\frac{X^{N,x}_{sN^2, \beta}}{N}\right)\rho_0\left(\frac{\hat{X}^{N,y,0}_{sN^2,\beta}}{N}\right)\right]H^2\left(\tfrac{x}{N}\right)
\]
and
\[
{\rm \uppercase\expandafter{\romannumeral6}}=\frac{1}{N^d}\sum_{x, y\in \mathbb{Z}^d, |x-y|=1}\mathbb{E}\left[\rho_0\left(\frac{X^{N,x}_{sN^2, \beta}}{N}\right)\rho_0\left(\frac{\hat{X}^{N,y,0}_{sN^2,\beta}}{N}\right)1_{\{\tau_{x,y}^{N,\beta,0}\leq sN^2\}}\right]H^2\left(\tfrac{x}{N}\right).
\]
Since $X^{N,x}_{sN^2, \beta}$ and $\hat{X}^{N,y,0}_{sN^2, \beta}$ are independent, it is easy to show that
\[
\lim_{N\rightarrow+\infty}{\rm \uppercase\expandafter{\romannumeral5}}=2d\int_{\mathbb{R}^d}\rho^2(s,u)H^2(u)du
\]
according to Theorem \ref{theorem invariance} and  \cite[Proposition 2.3 ]{Erhard2021RandomWalkwithSlowBondandSnappingOutBrownianMotion}. Hence, to prove Equation \eqref{equprL522three} we only need to show that
\begin{equation}\label{equprL522thirteen}
\lim_{N\rightarrow+\infty}{\rm \uppercase\expandafter{\romannumeral6}}=2d\gamma_d\int_{\mathbb{R}^d}\rho^2(s,u)H^2(u)du.
\end{equation}
Let $K_1, K_2, K_3$ be defined as in the proof of Equation \eqref{equprL522two}, then according to the strong Markov property  and an analysis similar with that given in the proof of Equation \eqref{equprL522two},
\begin{align*}\label{equ prL522fourteen}
&{\rm \uppercase\expandafter{\romannumeral6}}=\frac{1}{N^d}\sum_{x, y\in \mathbb{Z}^d: \atop \epsilon N\leq |x|\leq K_3N, |x-y|=1}\P\left(\left|X^{N,x}_{\tau_{x,y}^{N,\beta,0},\beta}-x\right|\leq K_2, \tau_{x,y}^{N,\beta,0}\leq K_1\right)\\
&\text{\quad\quad\quad\quad}\times\mathbb{E}\left[\rho_0\left(\frac{X^{N,x}_{sN^2,\beta}}{N}\right)\right] \mathbb{E}\left[\rho_0\left(\frac{X^{N,x}_{sN^2,\beta}}{N}\right)\right]
H^2\left(\tfrac{x}{N}\right)+\varepsilon_8,
\end{align*}
where $\varepsilon_8=\varepsilon_8(\epsilon)$ such that $\lim_{\epsilon\rightarrow 0}\varepsilon_8=0$. Then, according to definitions of $K_1, K_2, K_3$, Equation \eqref{equprL522eight} and an analysis similar with that given in the proof of Equation \eqref{equprL522two},
\begin{align*}
{\rm \uppercase\expandafter{\romannumeral6}}=\frac{1}{N^d}\sum_{x, y\in \mathbb{Z}^d, |x-y|=1}\gamma_d\mathbb{E}\left[\rho_0\left(\frac{X^{N,x}_{sN^2,\beta}}{N}\right)\right]^2
H^2\left(\tfrac{x}{N}\right)+\varepsilon_9,
\end{align*}
where $\lim_{\epsilon\rightarrow+\infty}\epsilon_9=0$. Equation \eqref{equprL522thirteen} follows from Theorem \ref{theorem invariance} and  \cite[Proposition 2.3 ]{Erhard2021RandomWalkwithSlowBondandSnappingOutBrownianMotion} and then the proof of Equation \eqref{equprL522three} is complete. 
\end{proof}

\begin{proof}[Proof of Lemma \ref{lemma 5.2.3}]
We claim that
\begin{equation}\label{equprL523one}
\lim_{K\rightarrow+\infty}\sup_{N\geq 1, x,y,z,w\in \mathbb{Z}^d:
\atop |x-y|=1, |z-w|=1, |(x-z)^\perp|>K}\left|{\rm Cov}\left(\eta_s(x)\eta_s(y), \eta_s(z)\eta_s(w)\right)\right|=0,
\end{equation}
where $x^\perp=(x_2,\ldots, x_d)$ as we have defined. We prove Equation \eqref{equprL523one} at the end of this proof. By Equation \eqref{equprL523one}, for any $\epsilon>0$, there exists $K_5=K_5(\epsilon)>0$ such that \[\left|{\rm Cov}\left(\eta_s(x)\eta_s(y), \eta_s(z)\eta_s(w)\right)\right|\leq \epsilon\] for any $N\geq 1$ and $x,y,z,w\in \mathbb{Z}^d$ such that $|x-y|=|z-w|=1, |(x-z)^\perp|>K_5$. Then, since $\epsilon N>K_5$ when $N$ is sufficiently large,
\begin{align*}
&{\rm Var}\left(\frac{1}{N^d}\sum_{x, y\in \mathbb{Z}^d, |x-y|=1}\eta_s(x)\eta_s(y)H^2\left(\tfrac{x}{N}\right)\right)\\
&\leq \epsilon\frac{1}{N^{2d}}\sum_{x,y\in \mathbb{Z}^d, |x-y|=1}\sum_{z,w\in \mathbb{Z}^d, |z-w|=1, |(z-x)^\perp|> \epsilon N}H^2\left(\tfrac{x}{N}\right)H^2\left(\tfrac{z}{N}\right)\\
&\text{\quad}+\frac{1}{N^{2d}}\sum_{x,y\in \mathbb{Z}^d, |x-y|=1}\sum_{z,w\in \mathbb{Z}^d, |z-w|=1, |(z-x)^\perp|\leq \epsilon N}H^2\left(\tfrac{x}{N}\right)H^2\left(\tfrac{z}{N}\right)\\
&\leq \varepsilon_{10}
\end{align*}
for sufficiently large $N$, where
\[
\epsilon_{10}=\epsilon_{10}(\epsilon)=8\epsilon d^2\left(\int_{u\in \mathbb{R}^d}H^2(u)du\right)^2
+8d^2\int_{u\in \mathbb{R}^d}\int_{v\in \mathbb{R}^d,|(v-u)^\perp|\leq \epsilon}H^2(u)H^2(v)dudv.
\]
As a result, \[\limsup_{N\rightarrow+\infty}{\rm Var}\left(\frac{1}{N^d}\sum_{x, y\in \mathbb{Z}^d, |x-y|=1}\eta_s(x)\eta_s(y)H^2\left(\tfrac{x}{N}\right)\right)\leq \epsilon_{10}(\epsilon).\] Since $\epsilon$ is arbitrary and $\lim_{\epsilon\rightarrow0}\epsilon_{10}(\epsilon)=0$, let $\epsilon\rightarrow 0$ and then Lemma \ref{lemma 5.2.3} holds.

At last we only need to check Equation \eqref{equprL523one}. For any $N\geq 1$ and $x,y,z,w\in \mathbb{Z}^d$, let
\[
\left\{\left\{\pi_{t,\beta}^{N,x}, \pi_{t,\beta}^{N,y}, \pi_{t,\beta}^{N,z}, \pi_{t,\beta}^{N,w}\right\}\right\}_{t\geq 0}
\]
be the coalescing random walk on $\mathbb{Z}^d$ such that
\[
\left\{\pi_{0,\beta}^{N,x}, \pi_{0,\beta}^{N,y}, \pi_{0,\beta}^{N,z}, \pi_{0,\beta}^{N,w}\right\}=\{x,y,z,w\}
\]
and $\left\{\pi_{t,\beta}^{N,x}, \pi_{t,\beta}^{N,y}, \pi_{t,\beta}^{N,z}, \pi_{t,\beta}^{N,w}\right\}$ jumps to $\left\{\pi_{t,\beta}^{N,x}, \pi_{t,\beta}^{N,y}, \pi_{t,\beta}^{N,z}, \pi_{t,\beta}^{N,w}\right\}\cup \{b\}\setminus \{a\}$ at rate $\xi^N_{a,b}$ given in \eqref{equ 2.2} for any $t\geq 0$ and $a\in \{\pi_{t,\beta}^{N,x}, \pi_{t,\beta}^{N,y}, \pi_{t,\beta}^{N,z}, \pi_{t,\beta}^{N,w}\}$, $|b-a|=1$. Then, according to {\bf Assumption (A)} and the duality relationship between the voter model and the coalescing random walk,
\begin{align}\label{equprL523two}
&{\rm Cov}\left(\eta_s(x)\eta_s(y), \eta_s(z)\eta_s(w)\right)\notag\\
&=\mathbb{E}\left(\prod_{a\in \{\pi_{sN^2,\beta}^{N,x}, \pi_{sN^2,\beta}^{N,y}, \pi_{sN^2,\beta}^{N,z}, \pi_{sN^2,\beta}^{N,w}\}}\rho_0(a)\right)\notag\\
&\text{\quad}-\mathbb{E}\left(\prod_{a\in \{X^{N,x}_{sN^2,\beta}, \tilde{X}^{N,y,0}_{sN^2,0,x}\}}\rho_0(a)\right)\mathbb{E}\left(\prod_{a\in \{X^{N,z}_{sN^2,\beta}, \tilde{X}^{N,w,0}_{sN^2,0,z}\}}\rho_0(a)\right).
\end{align}
We couple $\left\{\{X^{N,x}_{t,\beta}, \tilde{X}^{N,y,0}_{t,0,x}\}\right\}_{t\geq 0}$ and $\left\{\{X^{N,z}_{t,\beta}, \tilde{X}^{N,w,0}_{t,0,z}\}\right\}_{t\geq 0}$ in the same probability space such that $\left\{\{X^{N,x}_{t,\beta}, \tilde{X}^{N,y,0}_{t,0,x}\}\right\}_{t\geq 0}$ and $\left\{\{X^{N,z}_{t,\beta}, \tilde{X}^{N,w,0}_{t,0,z}\}\right\}_{t\geq 0}$ are independent. We define
\begin{align*}
\mathcal{T}^{x,y,z,w}_{N,\beta}=
\inf\left\{t:~\left\{X^{N,x}_{t,\beta}, \tilde{X}^{N,y,0}_{t,0,x}\right\}\cap \left\{X^{N,z}_{t,\beta}, \tilde{X}^{N,w,0}_{t,0,z}\right\}\neq \emptyset\right\},
\end{align*}
then we can further couple $\left\{\{\pi_{t,\beta}^{N,x}, \pi_{t,\beta}^{N,y}, \pi_{t,\beta}^{N,z}, \pi_{t,\beta}^{N,w}\}\right\}_{t\geq 0}$ and
$\left\{\{X^{N,x}_{t,\beta}, \tilde{X}^{N,y,0}_{t,0,x}, X^{N,z}_{t,\beta}, \tilde{X}^{N,w,0}_{t,0,z}\}\right\}_{t\geq 0}$ in the same probability space such that
\[
\{\pi_{t,\beta}^{N,x}, \pi_{t,\beta}^{N,y}, \pi_{t,\beta}^{N,z}, \pi_{t,\beta}^{N,w}\}
=\{X^{N,x}_{t,\beta}, \tilde{X}^{N,y,0}_{t,0,x}, X^{N,z}_{t,\beta}, \tilde{X}^{N,w,0}_{t,0,z}\}
\]
for $0\leq t\leq \mathcal{T}^{x,y,z,w}_{N,\beta}$. Therefore, by Equation \eqref{equprL523two},
\begin{equation}\label{equprL523three}
\left|{\rm Cov}\left(\eta_s(x)\eta_s(y), \eta_s(z)\eta_s(w)\right)\right|\leq 2\P\left(\mathcal{T}^{x,y,z,w}_{N,\beta}\leq sN^2\right)
\leq 2\P\left(\mathcal{T}^{x,y,z,w}_{N,\beta}<+\infty\right).
\end{equation}
Since $\left\{\left(X^{N,a}_{t,\beta}\right)^\perp\right\}_{t\geq 0}$ is a simple random walk on $\mathbb{Z}^{d-1}$ starting at $a^\perp$ for any $a\in \mathbb{Z}^d$,
\begin{equation}\label{equprL523four}
\P\left(\mathcal{T}^{x,y,z,w}_{N,\beta}<+\infty\right)\leq 4\sup_{\omega\in \mathbb{Z}^{d-1}, |\omega|\geq K-2}\Gamma(\omega)
\end{equation}
for any $x,y,z,w\in \mathbb{Z}^d$ such that $|x-y|=1, |z-w|=1, |(x-z)^\perp|\geq K$, where $\Gamma(\omega)$ is defined as in Subsection \ref{sec:replacement}. As we have shown in Subsection \ref{sec:replacement},
\[
\lim_{|\omega|\rightarrow+\infty}\Gamma(\omega)=0
\]
and hence Equation \eqref{equprL523one} follows from Equation \eqref{equprL523four}. This completes the proof.
\end{proof}

\subsection{Tightness.}\label{subsec:tight_fluc} In this subsection, we prove the tightness of the sequence of processes $\{\mc{Y}^N_t, 0 \leq t \leq T\}_{N \geq 1}$.  To this end, we only need to prove the tightness of the sequence of processes $\{\mc{Y}^N_t (H), 0 \leq t \leq T\}_{N \geq 1}$ for any $H \in \mc{S}_\beta (\R^d)$.

As in the proof of Lemma \ref{lem:tight}, we only need to check \eqref{tight_1} and \eqref{tight_2} also hold for $\{\mc{Y}^N_t (H), 0 \leq t \leq T\}_{N \geq 1}$. Since
\begin{equation}\label{ynt_secondmoment}
\sup_{N \geq 1} \E \Big[ \big(\mc{Y}^N_t (H)\big)^2 \Big] < \infty,
\end{equation}
by Chebyshev's inequality, \eqref{tight_1} holds for $\{\mc{Y}^N_t (H), 0 \leq t \leq T\}_{N \geq 1}$. By \eqref{fluc-m}, it suffices to prove \eqref{tight_2} holds for $\{\mc{M}^N_t (H), 0 \leq t \leq T\}_{N \geq 1}$ and $\{\int_0^t N^2 \gen \mc{Y}_s^N (H) ds, 0 \leq t \leq T\}_{N \geq 1}$.  By \eqref{fluc-n} and Theorem \ref{theorem 5.2.1},
\[\sup_{N \geq 1} \E \Big[ \sup_{0 \leq t \leq T}  \big(\mc{M}^N_t (H)\big)^2\Big] < \infty.\]
From the above inequality, as in the proof of Lemma \ref{lem:tight}, one could easily prove that $\{\mc{M}^N_t (H), 0 \leq t \leq T\}_{N \geq 1}$ satisfies \eqref{tight_2}. To conclude the proof of the tightness, we only need to prove that for any $\varepsilon > 0$,
 \begin{equation}\label{tight3}
 \lim_{\delta \rightarrow 0} \limsup_{N \rightarrow \infty} \P \Big( \sup_{0 \leq t-s \leq \delta} \Big|\int_s^t  N^2 \gen \mc{Y}_\tau^N (H) d\tau\Big| > \varepsilon \Big) = 0
 \end{equation}
From the computations in Subsection \ref{subsec:flucoutline}, we only need to prove the above equation for the three terms \eqref{fluc-1}, \eqref{fluc-2} and \eqref{fluc-3}. By \eqref{ynt_secondmoment}, it is true for \eqref{fluc-1}. In Subsection \ref{subsec:flucoutline}, we have shown that  the sum of the two terms \eqref{fluc-2} and \eqref{fluc-3} converges to zero in $L^2$, thus \eqref{tight3} is also true for  \eqref{fluc-2} and \eqref{fluc-3}. This concludes the proof of the tightness.

\medspace

\textbf{Acknowledgments.}  Xue thanks the financial support from  the Fundamental Research Funds for the Central Universities with grant number 2022JBMC039.
Zhao thanks the financial support from the Fundamental Research Funds for the Central Universities in China, and from the ANR grant MICMOV (ANR-19-CE40-0012) of the French National Research Agency (ANR).

\bibliographystyle{plain}
\bibliography{zhaoreference}
\end{document}